\documentclass[twoside, openany ]{amsart}
\usepackage{amsfonts,amsmath, amssymb}
\usepackage[linktocpage=true, colorlinks=true, linkcolor=RoyalBlue, filecolor=RoyalBlue,urlcolor=RoyalBlue,citecolor=RoyalBlue]{hyperref}
\usepackage{varioref}
\usepackage{graphicx}
\usepackage{aligned-overset}
\usepackage{float}
\usepackage{srcltx}
\usepackage[all]{xy}
\usepackage{bbm}
\usepackage[T2A, T1]{fontenc}
\usepackage[utf8]{inputenc}	
\usepackage{CJKutf8}
\usepackage{adjustbox}
\usepackage[dvipsnames]{xcolor}
\usepackage[normalem]{ulem}
\usepackage{bm}
\usepackage{pifont}
\usepackage{latexsym}
\usepackage{stmaryrd}
\usepackage[2emode]{psfrag}
\usepackage{yhmath}
\usepackage{array}
\usepackage{dsfont}
\usepackage{mathrsfs}
\usepackage{tikz-cd}
\usepackage{comment}
\usepackage{subcaption}
\usetikzlibrary { decorations.pathmorphing, decorations.pathreplacing, decorations.shapes }
\usetikzlibrary{graphs}
\usepackage{maybemath}
\usepackage[capitalize,noabbrev]{cleveref}
\crefname{section}{\S$\!\!$}{\S\S$\!\!$}
\Crefname{section}{Section}{Sections}
\crefname{subsection}{\S$\!\!$}{\S\S$\!\!$}
\Crefname{subsection}{Section}{Sections}
\crefname{subsubsection}{\S$\!\!$}{\S\S$\!\!$}
\Crefname{subsubsection}{Section}{Sections}

\DeclareSymbolFont{cyrillic}{T2A}{cmr}{m}{n}
\DeclareMathSymbol{\Zh}{\mathalpha}{cyrillic}{198}
\DeclarePairedDelimiter{\points}{<}{>}

\DeclareMathOperator*{\bigast}{\raisebox{-7pt}{\scalebox{2}{*}}\kern -2pt}

\interfootnotelinepenalty=10000
\DeclareMathOperator{\otb}{\bar{\otimes}}

\setcounter{tocdepth}{1}
\hyphenation{ge-ne-ra-li-sa-tion}
\definecolor{Maroon}{rgb}{0.6 0 0}
\definecolor{Prussian}{rgb}{0.05 0 0.6}
\definecolor{Emerald}{rgb}{0 0.5 0.1}

\newtheoremstyle{mytheorem}%
{10.0pt plus 2.0pt minus 2.0pt} 
{10.0pt plus 2.0pt minus 2.0pt} 
{\itshape} 
{} 
{\sc} 
{.} 
{ } 
{} 

\newtheoremstyle{mydefinition}%
{10.0pt plus 2.0pt minus 2.0pt} 
{10.0pt plus 2.0pt minus 2.0pt} 
{} 
{} 
{\sc} 
{.} 
{ } 
{} 

\newtheoremstyle{myexample}%
{10.0pt plus 2.0pt minus 2.0pt} 
{10.0pt plus 2.0pt minus 2.0pt} 
{\small} 
{} 
{\sc} 
{.} 
{ } 
{} 

\newtheoremstyle{myremark}%
{10.0pt plus 2.0pt minus 2.0pt} 
{10.0pt plus 2.0pt minus 2.0pt} 
{} 
{} 
{\itshape} 
{.} 
{ } 
{} 

\theoremstyle{mytheorem}
\newtheorem{theorem}{Theorem}[section]
\newtheorem{lemma}[theorem]{Lemma}

\newtheorem{corollary}[theorem]{Corollary}
\newtheorem{proposition}[theorem]{Proposition} 

\theoremstyle{myremark}
\newtheorem{remark}[theorem]{Remark}
\newtheorem{convention}[theorem]{Convention}

\theoremstyle{mydefinition}
\newtheorem{definition}[theorem]{Definition}

\theoremstyle{myexample}
\newtheorem{example}[theorem]{Example}

\newtheoremstyle{myzusatz}
{10.0pt plus 2.0pt minus 2.0pt} 
{10.0pt plus 2.0pt minus 2.0pt} 
{\itshape} 
{} 
{\sc} 
{.} 
{ } 
{\thmname{#1}\thmnumber{ #2}\thmnote{ #3}}

\theoremstyle{myzusatz}

\definecolor{gray1}{gray}{0.8}
\definecolor{gray2}{gray}{0.6}
\definecolor{gray3}{gray}{0.4}
\definecolor{gray4}{gray}{0.2}


\allowdisplaybreaks
\newcommand{\id}{\mathrm{id}}

\newcommand{\Cc}{\mathscr{C}}
\newcommand{\Dd}{\mathscr{D}}

\newcommand{\Nn}{\mathscr{N}}
\newcommand{\Aa}{\mathscr{A}}
\newcommand{\Bb}{\mathscr{B}}
\newcommand{\Ii}{\mathscr{I}}

\newcommand{\Gg}{\mathscr{G}}

\newcommand{\Rr}{\mathscr{R}}

\newcommand{\Hh}{\mathscr{H}}
\newcommand{\Kk}{\mathscr{K}}
\newcommand{\Mu}{\mathrm{M}}

\newcommand{\Nu}{\mathrm{N}}

\newcommand{\ot}{\otimes}

\newcommand{\set}[1]{\lbrace #1\rbrace}

\DeclareMathOperator{\btriangleright}{\protect\raisebox{0.4pt}{{\protect\scalebox{0.75}[0.85]{\protect\ensuremath{\blacktriangleright}}}}}

\def\Set{{\sf Set}}

\def\Cat{{\sf Cat}}

\def\FF{\mathrm{FF}}
\def\End{\mathrm{End}}

\def\Gpd{{\sf Gpd}}
\def\Gpdconn{{\sf Gpd}_{\mathrm{conn}}}
\def\GpdSchur{{\sf Gpd}_{\mathrm{Schur}}}
\def\Gp{{\sf Gp}}
\def\SES{{\sf SES}}

\def\Quiv{{\mathsf{Quiv}}}

\def\source{\mathfrak{s}}
\def\target{\mathfrak{t}}
\def\Source{\mathfrak{S}}
\def\Target{\mathfrak{T}}

\DeclareMathOperator{\Aut}{\mathrm{Aut}}

\newcommand{\Z}{\mathbb{Z}}

\newcommand{\blank}{\raisebox{-2pt}{\text{---}}}

{
	\left\{\begin{aligned}}
	{\end{aligned}
	\right.
}

\newcommand{\One}{\mathbbm{1}}
\DeclareMathOperator{\im}{\mathrm{im}}

\begin{document}
	\hyphenation{qua-si-grou-po-id}
	\hyphenation{qua-si-grou-po-ids}
	\hyphenation{grou-po-id}
	\hyphenation{grou-po-ids}
	\hyphenation{oid-i-fi-ca-tion}
	\hyphenation{a-quo-id}
	\hyphenation{a-quo-ids}
	
	\def\presub#1#2#3%
	{\mathop{}%
		\mathopen{\vphantom{#2}}_{#1}%
		\kern-\scriptspace%
		{#2}_{#3}}
	
	\newcommand{\fibre}[2]{\presub{#1}{\times}{#2}}
	
	\newcommand{\bicross}[2]{\presub{#1}{{\blacktriangleright\kern -3pt \blacktriangleleft}}{#2}}
	
	\newcommand{\lcross}[2]{\presub{#1}{{\blacktriangleright\kern -3pt <}}{#2}}
	
	\newcommand{\rcross}[2]{\presub{#1}{{> \kern -3pt \blacktriangleleft}}{#2}}
	
	\newcommand{\twfibre}[2]{\presub{#1}{\,\protect\scalebox{0.65}[1]{\protect\ensuremath{\bowtie}}\,}{#2}}
	
	\newcommand{\zappaszep}[2]{\presub{#1}{\,\bowtie\,}{#2}}
	
	\newcommand{\lfibre}[2]{\presub{#1}{\rtimes}{#2}}
	
	\newcommand{\rfibre}[2]{\presub{#1}{\ltimes}{#2}}
	
	\newcommand{\twomapsright}[2]{\,\underset{#2}{\overset{#1}{\rightrightarrows}} \, }
	\title[Split Lemma and First Isomorphism Theorem for groupoids]{Split Lemma and First Isomorphism Theorem\\ for groupoids}
	\author{Davide Ferri}
	\begin{abstract} Groupoids are the oidification of groups, and they are largely used in topology and representation theory. We consider here the category $\Gpd$ of all groupoids with all morphisms, and the category $\Gpd_\Lambda$ of groupoids over a fixed set of vertices $\Lambda$, with morphisms fixing $\Lambda$.
		
		In $\Gpd_\Lambda$, a First Isomorphism Theorem is already well known; see Ávila, Marín, and Pinedo (2020). Famously, the First Isomorphism Theorem fails to hold in $\Gpd$. However, we retrieve here a universally lifted version of the First Isomorphism Theorem in $\Gpd$, through the definition of \textit{virtual kernels}. 
		
		Semidirect products of a group by a groupoid are well known. We define crossed products in $\Gpd$, and prove that they are equivalent to split epimorphisms, i.e.\@ that they are the `categorial' notion of semidirect product in $\Gpd$ in the sense of Bourn and Janelidze (1998). We observe that in $\Gpd_\Lambda$ crossed products and semidirect products are essentially equivalent, under mild assumptions, and our Split Lemma in $\Gpd$ collapses to a much simpler Split Lemma in $\Gpd_\Lambda$ that appears in Metere and Montoli (2010) and Ibort and Marmo (2023). 
	\end{abstract}
	\address{%
		\parbox[b]{0.9\linewidth}{University of Turin, Department of Mathematics `G.\@ Peano',\\ via
			Carlo Alberto 10, 10123 Torino, Italy.\\
			Vrije Universiteit Brussel, Department of Mathematics and Data Science,\\ Pleinlaan 2, 1050, Brussels, Belgium.}}
	\email{d.ferri@unito.it, Davide.Ferri@vub.be}
	\keywords{Groupoids, Isomorphism Theorems, Actions, Crossed Products, Semidirect Products, Split Epimorphism, Split Lemma.}
	\subjclass[2020]{Primary 20L05; Secondary 18M05}
	\maketitle
	\tableofcontents 
	
	\section*{Introduction}\label{chapt:intro}
	Groupoids are categories (for us, always small) whose maps are all invertible. In this sense, groups are groupoids with a single object, and groupoids are the `oidification' of groups, in the terminology of \cite{jonsson2017poloids,orchard2020unifying}.
	
	The language of groupoids is extremely useful in topology (\textit{fundamental groupoids} are defined as naturally as fundamental groups \cite{BrownGroupoidsVanKampen,BrownNotJust,ReidemeisterEinfuehrung}), in differential geometry \cite{EhresmannC.StructuralGroupoids,pradines2007ehresmann}, in algebra, and in mathematical physics \cite{GroupoidalPictureQM}. The ultimate goal would be a theory of groupoids that has larger descriptive power than the theory of groups, while at the same time being just as tame.
	
	Two obstacles on this path have been known for decades. Namely:
	\begin{enumerate}
		\item the First Isomorphism Theorem, as stated for groups, fails to hold for groupoids \cite{BrownFibrations,BrownGroupoidsAsCoefficients,GroupsToGroupoidsBrown,BrownBookGroupoids,introGroupoids};
		\item it is hard to find a notion of `semidirect product' of groupoids that satisfies a Split Lemma; i.e., such that every split epimorphism of groupoids yields a semidirect product structure (i.e.\@ a semidirect product in the sense of Bourn and Janelidze \cite{BournJanelidzeProtomodularityDescSemProd}).
	\end{enumerate}
	In this paper, we go as far as we can in solving both the problems. We survey several attempts that have been introduced to deal with these issues in special cases, and we offer a common framework to cover all of them. 
	
	We consider two main categories, namely: the category $\Gpd$ of all small groupoids, on all possible sets of objects (\textit{vertices}), with \textit{morphisms of groupoids} given by all possible functors; and the category $\Gpd_\Lambda$ of groupoids with set of objects $\Lambda$, and \textit{strong morphisms of groupoids}, which are functors that act like the identity on $\Lambda$.
	
	A form of the First Isomorphism Theorem is known when the kernel is a group bundle \cite[Theorem 3]{avila_complete}. We reinterpret this as a First Isomorphism Theorem in $\Gpd_\Lambda$.
	
	As for the First Isomorphism Theorem in $\Gpd$, we prove that every short exact sequence of groupoids can be lifted to a \textit{universal} split short exact sequence on which the First Isomorphism Theorem holds. The word `split' cannot be removed, as we demonstrate with counterexamples. However, the construction is compatible with split sequences, as well as with groupoids coming from equivalence relations \cite[Example 2]{GroupsToGroupoidsBrown}, and in both cases the word `split' can be removed from the universal property. This constructs universal sequences enjoying the First Isomorphism Theorems, and our construction provides an adjunction of functors.
	
	Our universal lifting of short exact sequences suggests us the notion of \textit{virtual kernels}. A First Isomorphism Theorem in $\Gpd$, then, is obtained by (roughly speaking) `replacing kernels with virtual kernels'.
	
	The idea that a classical kernel may not be the right object to use, is not new. In the categories of cocommutative Hopf algebras \cite{HopfCocIsSemiabelian} and of cocommutative Hopf braces \cite{GranSciandra2024}, for instance, semi-abelian structures have been discovered by replacing classical kernels with something else.\footnote{Here, we are not trying to prove that $\Gpd$ is semi-abelian. It is actually known that it is \textit{not}, since it is the category of internal groupoids in $\Set$ which is not semi-abelian \cite{GranGrayInternalGroupoids}.} An alternative notion of kernel for functors was also employed in \cite{RhodesTilsonKerMonMorph}, with the name \textit{derived category}.
	
	As for the Split Lemma in $\Gpd$, we give a notion of \textit{crossed product} that corresponds exactly to split epimorphisms. Our notion of crossed product is naturally a `bilateral' notion, since quotients of groupoids are naturally bilateral. But in the category $\Gpd_\Lambda$, after choosing a distinguished vertex, this bilateral crossed product is canonically isomorphic to a \textit{unilateral} crossed product. Observe that a notion of semidirect product of categories (which is also unilateral) appears in Tilson \cite{TilsonMonKerSemiProd}.
	
	Under the assumption that one of the two groupoids is a group bundle, this unilateral crossed product is in turn isomorphic (although not necessarily canonically) to a `semidirect product' of a group by a groupoid, as presented in Brown \cite[\S 11.4]{BrownBookGroupoids}. The interested reader may wish to know that Brown's semidirect product has been generalised by Metere and Montoli \cite{MetereMontoliSemidirectIntGpd} to internal groupoids in any category satisfying suitable mild assumptions, using a famous categorical notion of semidirect product from Bourn and Janelidze \cite{BournJanelidzeProtomodularityDescSemProd}. This is actually a Split Lemma in $\Gpd_\Lambda$. Recently, the same lemma was considered and independently retrieved by Ibort and Marmo \cite{IbortMarmoGroupoids}, with applications to the groupoidal description of quantum mechanics.
	
	Therefore, we retrieve the Split Lemma in $\Gpd_\Lambda$, as a special case of a Split Lemma in $\Gpd$. 
	
	Incidentally, our work has produced a notion of \textit{balanced tensor product} of groupoids, which we use to define crossed products in \cref{subsec:crossed_Gpd}.
	\smallskip
	
	\noindent\textbf{A notational remark.} Throughout this paper, we use the \textit{Leibniz order} $fg = f\circ g$ for the composition of functions, but we use the \textit{anti-Leibniz} or \textit{diagrammatic order} for the composition of arrows in a groupoid; see \cref{conv:antiLeinbiz}. This notational ambiguity is used in other works \cite{BrownBookGroupoids,ferri2024dynamical}, and is particularly handy in the theory of groupoids. The origin resides in topology, and in the notion of fundamental groupoid, where it is customary to follow the diagrammatic order for the composition of continuous paths.
	\section{Quivers}\label{chapt:quivers}
	\subsection{Quivers, morphism, and strong morphisms} A quiver is a directed multigraph, possibly with loops. More algebraically, we define a quiver $Q$ as the datum of two sets $Q^1$ and $Q^0$, and a pair of maps $Q^1\twomapsright{\source}{\target}Q^0$. We call $Q^1$ the set of \textit{arrows}, $Q^0$ the set of \textit{vertices}, $\source$ and $\target$ the \textit{source} and \textit{target map} respectively (when the quiver $Q$ is not immediately clear from the context, we add subscripts such as $\source_Q, \target_Q$ for clarity). We say that $Q$ is a quiver \textit{over} $Q^0$.
	
	Given a quiver $Q$ and subsets $A,B\subseteq  Q^0$, we denote by $Q(A,B)$ the set of arrows with source in $A$ and target in $B$. Whenever one of the two sets is a singleton $\{\lambda\}$, we simply remove the brackets: for instance, $Q(\lambda,\mu) = Q(\{\lambda\}, \{\mu\})$. We call $Q(\lambda, Q^0)$ the (\textit{outgoing}) \textit{star} of $Q$ at $\lambda\in Q^0$. The set $Q(\lambda,\lambda)$ of \textit{loops} at $\lambda$ will be denoted by $Q_\lambda$.
	
	Although it is often handy to assume that $Q^0 = \im(\source)\cup \im(\target)$ (see \cite[Convention 2.3]{ferri2024dynamical}), we do not need to assume it here.
	
	\begin{definition}\label{def:morphQuiv}
		A \textit{morphism} of quivers $f=(f^1, f^0)\colon Q\to R$ is a pair of maps $f^i\colon Q^i\to R^i$, $i=1,0$, such that $\source_R(f^1(x)) = f^0\source_Q(x)$ and $\target_R(f^1(x))  = f^0\target_Q(x)$ for all $x\in Q^1$.
		
		If $Q$ and $R$ are both quivers over $\Lambda=Q^0=R^0$, we say that a morphism $f$ is \textit{strong} (over $\Lambda$) if $f^0 = \id_\Lambda$. This is also called \textit{a morphism over $\Lambda$} in other places \cite{ferri2024dynamical, matsumotoshimizu}. We shall sometimes be sloppy in using this terminology, and say that $f$ is `strong' if $Q^0\subseteq R^0$ and $f^0$ is the inclusion (see \cite[Definition 9]{avila_complete} and  \cref{rem:strong_hom}).
		
		We denote by $\Quiv$, resp.\@ $\Quiv_\Lambda$, the category of quivers with their morphisms, resp.\@ of quivers over $\Lambda$ with their strong morphisms.
	\end{definition}
	
	The terminologies `weak morphisms' for morphisms, and `morphisms' for morphisms over $\Lambda$, are used in \cite{ferri2024dynamical} for convenience but are highly non-standard.
	
	\subsection{Monomorphism and epimorphisms} We say that a morphism $f$ is \textit{full}, \textit{faithful}, or \textit{fully faithful} if $f^1$ is injective, surjective, or bijective, respectively. We say that $f$ is \textit{injective on the vertices}, \textit{surjective on the vertices}, or \textit{bijective on the vertices}, if $f^0$ is injective, surjective, or bijective, respectively.
	
	The following result is expected, although not entirely trivial. 
	
	\begin{lemma}\label{lem:mono_epi_Quiv}
		A morphism $f$ is a monomorphism, resp.\@ epimorphism in $\Quiv$, if and only if $f^1$ and $f^0$ are both monomorphisms, resp.\@ epimorphisms in $\Set$.
		
		A strong morphism $f$ is a monomorphism, resp.\@ an epimorphism in $\Quiv_\Lambda$, if and only if it is a monomorphism, resp.\@ an epimorphism in $\Quiv$.
	\end{lemma}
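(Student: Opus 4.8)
The plan is to exploit the fact that $\Quiv$ is a presheaf category: a quiver is exactly a functor from the two-object category $\bullet \rightrightarrows \bullet$ to $\Set$, and morphisms of quivers are natural transformations. In any functor category $[\Cc, \Set]$ monomorphisms and epimorphisms are computed componentwise. Hence the first assertion is immediate once this identification is made. However, since the paper is aimed at a readership that may want a hands-on argument, I would also spell out the elementary proof, which is what I sketch below.

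\textbf{Monomorphisms in $\Quiv$.} For the ``if'' direction, suppose $f^1$ and $f^0$ are both injective; given $g, h\colon P\to Q$ with $fg = fh$, componentwise injectivity of $f^i$ forces $g^i = h^i$ for $i = 0,1$, so $g = h$. For the ``only if'' direction I would use well-chosen test quivers. To show $f^0$ is injective, take $P$ to be a quiver with a single vertex and no arrows; a morphism $P\to Q$ is just a choice of vertex of $Q$, so $f$ being monic forces $f^0$ to be injective. To show $f^1$ is injective, take $P$ to be the ``single arrow'' quiver $\{e\}\rightrightarrows\{v_0,v_1\}$ with $\source(e)=v_0$, $\target(e)=v_1$ (or the loop quiver, in the loop case); a morphism $P\to Q$ is precisely a choice of arrow of $Q$, and monicity of $f$ then forces $f^1$ injective. (One must treat the loop case $\source(e)=\target(e)$ separately, or simply note that together the two test quivers detect injectivity of $f^1$ on all arrows.)

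\textbf{Epimorphisms in $\Quiv$.} The ``if'' direction is again componentwise: if $f^1, f^0$ are surjective and $gf = hf$, then $g^i$ and $h^i$ agree on the image of $f^i$, which is everything. For ``only if'' the cleanest route is a cokernel-pair / pushout argument, or equivalently a two-element-codomain test: to see $f^0$ is surjective, compose $f$ with the two morphisms $Q\to R$ into the quiver $R$ having two vertices, no arrows, and $R^0\setminus\{*\}$ glued — concretely, let $R$ collapse everything in the image of $f^0$ to a point and be the identity elsewhere; these two morphisms agree after precomposition with $f$ iff $f^0$ is surjective. An analogous construction on arrows handles $f^1$. Alternatively, and perhaps more transparently, I would invoke that in a presheaf topos epis are exactly the pointwise-surjective maps, citing this as folklore. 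The mild subtlety here is bookkeeping the source/target constraints so that the test quivers are genuinely quivers; this is routine but is the one place where a careless argument could go wrong.

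\textbf{The statement for $\Quiv_\Lambda$.} Here I would argue as follows. A strong morphism $f$ over $\Lambda$ has $f^0 = \id_\Lambda$, which is automatically both monic and epic in $\Set$; so by the first part, $f$ is monic (resp.\ epic) in $\Quiv$ if and only if $f^1$ is monic (resp.\ epic) in $\Set$. It therefore suffices to show: $f$ is monic (resp.\ epic) in $\Quiv_\Lambda$ iff $f^1$ is monic (resp.\ epic) in $\Set$. One direction is formal — $\Quiv_\Lambda$ is a (non-full) subcategory of $\Quiv$ containing all the relevant test objects, so a morphism monic in $\Quiv$ is a fortiori monic in $\Quiv_\Lambda$; and if $f$ is epic in $\Quiv$ it is epic in any subcategory through which it factors, hence in $\Quiv_\Lambda$. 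For the converse, note that the test quivers used above — the one-vertex-no-arrow quiver, the single-arrow quiver, and their cokernel-pair analogues — can all be taken to live over $\Lambda$ (map their unique vertex, or the two endpoints of the test arrow, to any chosen elements of $\Lambda$), and the comparison morphisms can be chosen strong. So the same test detects monicity/epicity of $f^1$ inside $\Quiv_\Lambda$. The one genuine point to check is that, when $\Lambda$ is a singleton or very small, there are still ``enough'' vertices to place the endpoints of the test arrow; but if $|\Lambda| = 1$ the only test arrow is a loop and the loop quiver over $\Lambda$ does the job, while if $\Lambda = \emptyset$ there are no arrows at all and the statement is vacuous.

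\textbf{Expected main obstacle.} None of the steps is deep; the only friction is the combinatorial bookkeeping in the epimorphism direction — building the cokernel-pair test quivers explicitly and verifying the source/target axioms — and making sure every test object and comparison morphism can be arranged to be strong in the $\Quiv_\Lambda$ case. I would keep this brief, or else simply invoke the presheaf-topos description of mono- and epimorphisms and relegate the subcategory comparison to a one-line remark.
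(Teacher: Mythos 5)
Your proposal is correct, and it overlaps with the paper's proof in spirit while differing in two ways worth noting. The paper's argument is entirely elementary: for injectivity of $f^1$ it builds, from a putative pair $\alpha^1,\beta^1$, a single auxiliary quiver whose vertex set is two disjoint copies of $Q^1$ (``formal sources'' and ``formal targets''), which is in effect the coproduct of all your single-arrow test quivers at once; for $f^0$ it uses the same one-vertex, no-arrow quiver you do (this is needed precisely because the paper does not assume $Q^0=\im(\source)\cup\im(\target)$); and for epimorphisms it simply declares the argument ``dual.'' Your presheaf observation --- that $\Quiv$ is the functor category on the walking parallel pair, so monos and epis are computed pointwise --- is a genuinely different and cleaner route that the paper does not take; it disposes of both halves of the first assertion at once, including the epi case where your explicit ``collapse to a point'' construction is the one under-specified step in your write-up (as stated it produces only one of the two comparison morphisms; the cokernel-pair or the topos-folklore citation is what actually closes that case). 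For $\Quiv_\Lambda$ your argument matches the paper's: the easy direction is formal, and for the converse the key point --- which you could make more explicit --- is that $f^0=\id_\Lambda$ forces $f^1$ to preserve each hom-set $Q(\lambda,\mu)$, so any two arrows identified by $f^1$ are automatically parallel and the single-arrow test quiver over $\Lambda$ admits the required strong comparison morphisms; this also shows that your worries about the loop case and about $\Lambda$ being too small are unnecessary, since the endpoints of the test arrow are determined rather than chosen. (Incidentally, $\Quiv_\Lambda\cong\Set^{\Lambda\times\Lambda}$, so the pointwise-on-hom-sets description is itself a presheaf-category statement, which is implicitly how the paper argues.)
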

	\begin{proof}
		If $f^1$ and $f^0$ are monic, resp.\@ epic in $\Set$, then $f$ is clearly monic, resp.\@ epic in $\Quiv$. We now prove the converse
		.
		
		\textit{Monomorphisms.} Let $f\colon R\to S$ be such that, for all quivers $Q$ and morphisms $\alpha,\beta\colon Q\to R$ satisfying $ f \alpha=  f\beta$, one has $\alpha = \beta$.
		\begin{description}
			\item[Proof that $f^1$ is monic]Let $\alpha^1,\beta^1\colon Q^1\to R^1$ be maps, such that $f^1\alpha^1 = f^1\beta^1$. It suffices to find some set $Q^0$, some maps $\alpha^0, \beta^0\colon Q^0\to R^0$, and some maps $\source_Q,\target_Q\colon Q^1\to Q^0$ such that $\alpha = (\alpha^1,\alpha^0)$ and $\beta = (\beta^1, \beta^0)$ are morphisms in $\Quiv$, and $f^0\alpha^0 = f^0\beta^0$. In this case, we would conclude $\alpha^1 = \beta^1$ from the fact that $f$ is monic in $\Quiv$.
			
			We define $Q^0 $ as the disjoint union of two copies of $Q^1$, the elements of the first copy being denoted $\{ s_x\mid x\in Q^1 \}$ (`formal sources'), and the elements of the second copy being denoted $\{ t_x\mid x\in Q^1 \}$ (`formal targets'). We let $\source_Q(x) = s_x$, $\target_Q(x)= t_x$, and we define
			\begin{align*} &\alpha^0(s_x)=\source_R \alpha^1(x), && \alpha^0(t_x)=\target_R\alpha^1(x),\\ 
				& \beta^0(s_x)=\source_R \beta^1(x), && \beta^0(t_x)=\target_R\beta^1(x). \end{align*}
			One has
			\[ f^0 \alpha^0 (s_x) = f^0\source_R\alpha^1(x)= \source_S f^1\alpha^1(x)=\source_Sf^1\beta^1(x) = f^0\beta^0(s_x),  \]
			and similarly for $t_x$. Thus $Q^0$, $\source_Q$, $\beta_Q$, $\alpha^0$, $\beta^0$ satisfy the desired properties.
			\item[Proof that $f^0$ is monic] Suppose by contradiction that $f$ is monic but $f^0$ is not. Then there exist vertices $\lambda\neq \mu \in R^0$ such that $f^0(\lambda)=f^0(\mu)$. Let $Q^1 = \emptyset$, $Q^0 = \{\bullet\}$, $\source_Q = \target_Q = \emptyset$ be the quiver with one vertex and no arrows, where the symbol $\emptyset$ is used to denote both the empty set and the empty functions $\emptyset\to X$ for any set $X$.\footnote{We need to consider a quiver $Q$ with empty set of arrows, here, because in principle $\lambda$ and $\mu$ might be isolated vertices in $R$. Indeed, we are not assuming that $R^0=\im(\source_R)\cup \im(\target_R)$.} Let $\alpha, \beta\colon Q\to R$ be the morphisms with $\alpha^1 = \beta^1 = \emptyset$, $\alpha^0 (\bullet) = \lambda$, $\beta^0 (\bullet) =\mu$. Clearly, $f\alpha = f\beta$ but $\alpha \neq \beta$, contradiction.
		\end{description}
		
		\textit{Epimorphisms.} Let $f\colon Q\to R$ be such that, for all quivers $S$ and morphisms $\alpha,\beta\colon  R\to S$ satisfying $\alpha f = \beta f$, one has $\alpha = \beta$.
		\begin{description}
			\item[Proof that $f^1$ is epic] Suppose by contradiction that $f$ is epic but $f^1$ is not. Then there exist some set $S^1$ and distinct maps $\alpha^1, \beta^1\colon R^1\to S^1$ such that $\alpha^1 f^1 = \beta^1 f^1$. We put a quiver structure on $S^1$: consider formal sources $\mathrm{FS}=\{s_x\mid x\in S^1\}$ and formal targets $\mathrm{FT}=\{t_x\mid x\in S^1\}$ as before, and let $S^0 = Z\sqcup \big((\mathrm{FS}\cup \mathrm{FT})/\sim\big)$, where $Z = R^0\smallsetminus (\im(\source_R)\cup \im(\target_R))$, and $\sim$ is the equivalence relation generated by
			\begin{align*}&s_{\alpha^1(x)}\sim s_{\alpha^1(y)}\text{ whenever }\source_R(x)=\source_R(y),\\
				&s_{\beta^1(x)}\sim s_{\beta^1(y)}\text{ whenever }\source_R(x)=\source_R(y),\\
				&t_{\alpha^1(x)}\sim t_{\alpha^1(y)}\text{ whenever }\target_R(x)=\target_R(y),\\
				&t_{\beta^1(x)}\sim t_{\beta^1(y)}\text{ whenever }\target_R(x)=\target_R(y),\\
				& t_{\alpha^1(x)} = s_{\alpha^1(y)}\text{ whenever }x\ot y\in (R\ot R)^1,\\
				& t_{\beta^1(x)} = s_{\beta^1(y)}\text{ whenever }x\ot y\in (R\ot R)^1.
			 \end{align*}
			 By definition of the equivalence relation, now the following maps are well-defined:
			\[
			 	\alpha^0(\lambda) = \begin{cases}s_{\alpha^1(x)}&\text{for any }x\in R^1,\, \source(x)=\lambda,\\
			 		t_{\alpha^1(x)}&\text{for any }x\in R^1,\, \target(x)=\lambda,\\
			 		\lambda &\text{if }\lambda\notin \im(\source_R)\cup \im(\target_R);
			 	 \end{cases}
			 \]
			 \[
			 \beta^0(\lambda) = \begin{cases}s_{\beta^1(x)}&\text{for any }x\in R^1,\, \source(x)=\lambda,\\
			 	t_{\beta^1(x)}&\text{for any }x\in R^1,\, \target(x)=\lambda,\\
			 	\lambda &\text{if }\lambda\notin \im(\source_R)\cup \im(\target_R).
			 \end{cases}
			 \]
			 With these maps, $\alpha= (\alpha^1, \alpha^0)$ and $\beta = (\beta^1, \beta^0)$ become morphisms of quivers, and $\alpha f = \beta f$, contradiction.
			 \item[Proof that $f^0$ is epic] Suppose by contradiction that $f$ is epic but $f^0$ is not, hence $X= R^0\smallsetminus\im(f^0)\neq \emptyset$. let $S$ be the quiver with
			 \begin{align*} &S^0 = (R^0\smallsetminus\{\lambda\})\sqcup X\times \{0, 1\},\\
			 	 &S^1= \Big(R^1\smallsetminus (R(X, R^0)\cup R(R^0,X))\Big) \sqcup \Big( (R(X, R^0)\cup R(R^0,X))\times \{0,1\}\Big),\end{align*}
			 \[\source_S(x)=\begin{cases}(\source_R(x), 0)&\text{if }x\in R(X,R^0)\times \{0\},\\
			 (\source_R(x), 1)&\text{if }x\in R(X,R^0)\times \{1\},\\
			 \source_R(x)&\text{otherwise;}\end{cases}\]\[ \target_S(x)=\begin{cases}(\target_R(x), 0)&\text{if }x\in R(R^0,X)\times \{0\},\\
			 	(\target_R(x), 1)&\text{if }x\in R(R^0,X)\times \{1\},\\
			 	\target_R(x)&\text{otherwise.} \end{cases} \]
			  This is the quiver identical to $R$, except that the vertex $\lambda$ and every arrow meeting $\lambda$ are duplicated. Now let $\alpha^0,\beta^0\colon R^0\to S^0$ send $\lambda$ to $\lambda_0,\lambda_1$ respectively, and act like the identity on $R^0\smallsetminus\{\lambda\}$. Similarly, let $\alpha^1,\beta^1\colon R^1\to S^1$ send $x\in R(\lambda, R^0)\cup R(R^0,\lambda)$ to $(x,0)$ and $(x,1)$ respectively, and act like the identity on $R^1\smallsetminus (R(\lambda, R^0)\cup R(R^0,\lambda))$. Clearly $\alpha f = \beta f$, but $\alpha$ and $\beta$ are distinct, contradiction.
		\end{description} 
		
		A strong morphism that is monic, resp.\@ epic as a morphism in $\Quiv$, is obviously monic, resp.\@ epic in $\Quiv_\Lambda$. Conversely, if $f\colon Q\to R $ is monic, resp.\@ epic in $\Quiv_\Lambda$, we notice that $f^1$ restricts to functions $Q(\lambda,\mu)\to R(\lambda,\mu)$ for all $(\lambda,\mu)\in\Lambda\times\Lambda$. From this, it is immediate to observe that $f$ must be faithful, resp.\@ full. Thus both $f^1$ and $f^0 = \id_\Lambda$ are monic, resp.\@ epic in $\Set$. By the first part of the Lemma, then, one has that $f= (f^1, \id_\Lambda)$ is monic, resp.\@ epic in $\Quiv$.
	\end{proof}
	
	In the same vein as \cref{lem:mono_epi_Quiv}, monomorphisms and epimorphisms for other categories of graphs have been characterised by Plessas \cite{PlessasThesis}.
	
	Adapting our terminology from \cite{VanOystaeyenQuivers} (see also \cite{ferri2024dynamical}), we use the term \textit{Schurian} for a quiver $Q$ satisfying $|Q(\lambda,\mu)|\le 1$ for all $\lambda,\mu\in Q^0$ (a category with the same property was dubbed \textit{trivial} by Tilson \cite[\S3]{TilsonCatAsAlg}). Notoriously, a Schurian quiver $Q$ is the same as a relation on the set $Q^0$; a Schurian (small) category $\Cc$ is the same as a reflexive transitive relation on $\Cc^0$; and a Schurian (small) groupoid  $\Gg$ (see \cref{def:groupoid}) is the same as an equivalence relation on $\Gg^0$ \cite[Example 2]{GroupsToGroupoidsBrown}.
	
	\subsection{Equivalence pairs and quotients} Quotients of sets are taken with respect to equivalence relations. Analogously, quotients of quivers are taken with respect to \textit{equivalence pairs}. The name is introduced here, but the concept dates back to \cite[(1.9), (1.10)]{TilsonCatAsAlg}. Some of the ideas are also contained in \cite{GeneralizedCongruences,ChustCoelhoPathAlgebras} and many similar works. 
	
	\begin{definition}A \textit{relation pair} on $Q$ is a pair $(\sim,\approx)$ of relations on $Q^1$ and $Q^0$ respectively, such that $\source$ and $\target$ pass to well-defined maps $Q^1/\!\sim\,\to Q^0/\!\approx$; i.e., such that $x\sim y$ implies $\source(x)\approx \source(y)$ and $\target(x)\approx\target(y)$ for all $x,y\in Q^1$.
		
		An \textit{equivalence pair} on $Q$ is a relation pair $(\sim,\approx)$ such that both $\sim$ and $\approx$ are equivalence relations.\end{definition}
	
	If $(\sim,\approx)$ is an equivalence pair on $Q$, then there is a canonical morphism \[ \pi = (\pi^1, \pi^0)\colon \left(Q^1\twomapsright{\source}{\target}Q^0 \right)\to \left(Q^1/\!\sim\twomapsright{\source}{\target}Q^0/\!\approx\right),\]
	where $\pi^1$ and $\pi^0$ are the projections modulo $\sim$ and $\approx$ respectively. 
	
	\begin{lemma}\label{lem:EquivalenceToEquivalencePair} Let $Q$ be a quiver. 
		\begin{enumerate}
			\item For every equivalence relation $\sim$ on $Q^1$, there exists a minimum equivalence relation $\approx$ on $Q^0$ such that $(\sim,\approx)$ is an equivalence pair. 
			\item For every equivalence relation $\approx$ on $Q^0$, there exists a maximum equivalence relation $\sim$ on $Q^1$ such that $(\sim,\approx)$ is an equivalence pair.
			\item For every equivalence relation $\approx$ on $Q^0$, there exists a minimum equivalence relation $\sim$ on $Q^1$ such that $(\sim,\approx)$ is an equivalence pair: namely, the trivial relation $\{(x,x)\mid x\in Q^1\}$.
			\item If $Q$ is Schurian, for each equivalence relation $\approx$ on $Q^0$ there exists a minimum equivalence relation $\sim$ on $Q^1$ such that $(\sim,\approx)$ is an equivalence pair and the quotient is Schurian.
		\end{enumerate}
	\end{lemma}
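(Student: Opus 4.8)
The plan is to treat the four items separately; each reduces to writing down the right relation and a short order-theoretic check. For \emph{(1)}, I would let $\rho$ be the relation on $Q^0$ consisting of all pairs $(\source(x),\source(y))$ and $(\target(x),\target(y))$ with $x\sim y$, and take $\approx$ to be the equivalence relation on $Q^0$ generated by $\rho$, i.e.\@ the intersection of all equivalence relations containing $\rho$. By construction $x\sim y$ forces $\source(x)\approx\source(y)$ and $\target(x)\approx\target(y)$, so $(\sim,\approx)$ is an equivalence pair. Moreover any equivalence relation $\approx'$ with $(\sim,\approx')$ an equivalence pair must contain $\rho$, hence contains the equivalence relation generated by $\rho$; so $\approx\subseteq\approx'$, which is the asserted minimality.

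For \emph{(2)}, put $x\sim y$ if and only if $\source(x)\approx\source(y)$ and $\target(x)\approx\target(y)$. This $\sim$ is the preimage of the product equivalence relation $\approx\times\approx$ under $Q^1\to Q^0\times Q^0$, $x\mapsto(\source(x),\target(x))$, hence is itself an equivalence relation, and $(\sim,\approx)$ is an equivalence pair by construction. If $(\sim',\approx)$ is any equivalence pair, then $x\sim' y$ implies $\source(x)\approx\source(y)$ and $\target(x)\approx\target(y)$, i.e.\@ $x\sim y$; thus $\sim'\subseteq\sim$, giving maximality. Item \emph{(3)} is immediate: the diagonal $\{(x,x)\mid x\in Q^1\}$ is an equivalence relation, the defining implication of an equivalence pair holds trivially by reflexivity of $\approx$, and the diagonal is contained in every equivalence relation on $Q^1$.

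For \emph{(4)}, I claim the minimum is exactly the relation $\sim_{\max}$ of \emph{(2)}. First, $\sim_{\max}$ yields a Schurian quotient: the $\sim_{\max}$-class of an arrow $x$ is determined by the pair $\bigl([\source(x)],[\target(x)]\bigr)$ of $\approx$-classes, which is precisely the source and target of that class in the quotient quiver $Q^1/\!\sim_{\max}\,\twomapsright{\source}{\target}\,Q^0/\!\approx$; hence no two distinct arrow-classes share both source and target. Conversely, let $(\sim',\approx)$ be an equivalence pair whose quotient is Schurian, with projection $\pi=(\pi^1,\pi^0)$, and suppose $x\sim_{\max}y$. Then $\pi^1(x)$ and $\pi^1(y)$ are arrows of that quotient with equal source $\pi^0\source(x)=\pi^0\source(y)$ and equal target $\pi^0\target(x)=\pi^0\target(y)$, so Schurianness forces $\pi^1(x)=\pi^1(y)$, i.e.\@ $x\sim' y$; hence $\sim_{\max}\subseteq\sim'$. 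Therefore $\sim_{\max}$ is the required minimum. (Schurianness of $Q$ is not actually used here; in that case the quotient quiver is just the Schurian quiver attached to the quotient relation induced by $\approx$ on $Q^0$, which is the setting behind the identification of Schurian groupoids with equivalence relations recalled above.)

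The statement is routine; the only point that needs a moment of thought is \emph{(4)}, where one has to observe that the constraint ``the quotient is Schurian'' is equivalent to the lower bound $\sim_{\max}\subseteq\sim$ and, simultaneously, that $\sim_{\max}$ is itself admissible — after which minimality comes for free.
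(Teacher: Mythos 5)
Your proposal is correct and constructs exactly the same relations as the paper (the generated equivalence on sources/targets for (1), the pullback of $\approx\times\approx$ along $(\source,\target)$ for (2) and (4), the diagonal for (3)); the paper simply leaves the verifications to the reader, which you have spelled out. Your side observation that the Schurian hypothesis on $Q$ is not actually needed in (4) is also accurate.
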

	\begin{proof}We construct the relations, and leave to the reader the easy verification that they satisfy the desired properties. \begin{enumerate}
			\item Let $\lambda \equiv \mu$ if and only if there exist $x,y\in Q^1$ with $x\sim y$ such that $\source(x) = \lambda$, $\source(y) = \mu$, or there exist $x,y\in Q^1$ with $x\sim y$ such that $\target(x) = \lambda$, $\target(y) = \mu$. Define $\approx$ as the equivalence relation generated by $\equiv$.
			\item Define $x\sim y$ if and only if $\source(x)\approx \source(y)$ and $\target(x)\approx\target(y)$.
			\item Trivial.
			\item Define $x\sim y$ if and only if $\source(x)\approx\source(y)$ and $\target(x)\approx\target(y)$.\qedhere
		\end{enumerate}
	\end{proof}
	\subsection{Twisted fibre product}\label{subsec:twisted_fibre} The \textit{fibre product} of quivers is a classical object; see e.g.\@ \cite{andruskiewitsch2005quiver}. We give here a slight generalisation, which will be needed later.
	
	If $Q$ and $R$ are two quivers, $\Lambda$ a set, and $q\colon R^1\to \Lambda^{Q^1}$ and $p\colon Q^1\to \Lambda^{R^1}$ are two maps sending $b\in R^1$ to $q_b\colon Q^1\to \Lambda$, respectively $a\in Q^1$ to $p_a\colon R^1\to \Lambda$, then one can define the \textit{twisted fibre product} $Q\twfibre{q}{p} R $. This is a quiver with set of vertices $Q^0\cup R^0$, set of arrows
	\[ (Q\twfibre{q}{p}R)^1 = \{ a\times b \in Q^1\times R^1\mid q_b(a) = p_a(b) \}, \]
	and source and target maps $\source(a\times b) = \source_Q(a)$, $\target(a\times b) = \target_R(b)$. We use the notation $Q\lfibre{q}{p}R$ if $q_b$ does not depend on $b$; $Q\rfibre{q}{p}R$ if $p_a$ does not depend on $a$; and $Q\fibre{q}{p}R$ when both maps $q_b,p_a$ are independent of $b,a$ respectively. In the latter case, $Q\fibre{q}{p}R$ is classically called the \textit{fibre product} of $Q$ and $R$, and with a slight abuse we identify $q,p$ with functions $q\colon Q^1\to \Lambda$, $p\colon R^1\to \Lambda$ respectively. We call $Q\fibre{\id}{\id} R$ the \textit{cartesian product} of the two quivers, and we denote it by $Q\times R$.
	
	\subsection{Monoidal structure on $\maybebm{\Quiv_\Lambda}$}\label{subsec:QuivLambda_monoidal} The category $\Quiv_\Lambda$ is monoidal, with the following monoidal product, described by Matsumoto and Shimizu \cite{matsumotoshimizu}. 
	
	Given $Q$ and $R$ in $\Quiv_\Lambda$, define $Q\ot R = Q\fibre{\target_Q}{\source_R}R$, which is again a quiver over $\Lambda$. A pair $q\times r \in (Q\otimes R)^1$ will be called a pair of \textit{consecutive arrows}, and written as $q\otimes r$. 
	
	The monoidal unit is the quiver $\One_\Lambda$ that has exactly one loop on each vertex. This is not unique, but it is clearly unique up to strong isomorphism over $\Lambda$.
	
	This category is not strict, but we assume to be working in a strictification whenever needed.
	
	\section{On the category of groupoids}\label{chapt:groupoids}
	\subsection{Groupoids, morphisms, and strong morphisms} A groupoid is usually defined as a category whose morphisms are all isomorphisms. Here, we always assume groupoids to be \textit{small}, and hence we can give the following equivalent definition (a weaker form of which dates back to Brandt \cite{BrandtGroupoids}).
	
	\begin{definition}\label{def:groupoid}
		A groupoid $(\Gg,\cdot)$ is a quiver $\Gg$ (where $\Gg^1, \Gg^0\neq \emptyset$) with a binary operation $\cdot$ on $\Gg^1$ that is a morphism $\Gg\ot \Gg\to \Gg$, such that
		\begin{enumerate}
			\item $a(bc)=(ab)c$ for all $a\ot b\ot c$ (\textit{associativity});
			\item for every $\lambda\in \Gg^0$ there exists a loop $1_\lambda$ over $\lambda$, satisfying $a1_\lambda = a$ and $1_\lambda b = b$ for all $a\ot 1_\lambda$, $1_\lambda\ot b$ (\textit{bundle of neutral elements});
			\item for all $a\in \Gg^1$ there exists $a^{-1}\in \Gg^1$ satisfying $aa^{-1} = 1_{\source(a)}$, $a^{-1}a = 1_{\target(a)}$ (\textit{inverses}). 
		\end{enumerate} 
		A \textit{morphism of groupoids} is simply a functor; i.e.\@ a morphism of quivers that intertwines the two binary operations. A \textit{strong morphism of groupoids} (\textit{over $\Lambda$}) is a strong morphism of quivers (over $\Lambda$) that is also a morphism of groupoids. We denote by $\Gpd$, resp.\@ $\Gpd_\Lambda$, the category of groupoids with morphisms, resp.\@ groupoids over $\Lambda$ with strong morphisms over $\Lambda$.
	\end{definition}
	Observe that the local neutral element $1_\lambda$, once it exists, is unique for all $\lambda$; and similarly for the inverses. A morphism of groupoids $f = (f^1, f^0)\colon \Gg\to \Hh$ is forced to satisfy $1_{f^0(\lambda)} = f^1(1_\lambda)$ for all $\lambda\in\Gg^0$.
	
	For a subset $S\subseteq \Gg^0$, we use the notation $1_S$ for the family of loops $\{1_\lambda\}_{\lambda \in S}$; and we write $1_\Gg$ for $1_{\Gg^0}$. Observe that, as a groupoid, the subgroupoid $1_\Gg$ is isomorphic to $\One_{\Gg^0}$. 
	
	For every $\lambda\in\Gg^0$, the set of loops $\Gg_\lambda$ is a group, called the \textit{isotropy group} at $\lambda$.
	\begin{convention}\label{conv:antiLeinbiz}
		As already anticipated, we are using here the \textit{anti-Leibniz order} or \textit{diagrammatic convention} on the binary operation of groupoids, reading the multiplication on consecutive arrows \textit{from left to right}; i.e., the product of
		\[\begin{tikzcd}
			\lambda & \mu & \nu
			\arrow["a"{description}, from=1-1, to=1-2]
			\arrow["b"{description}, from=1-2, to=1-3]
		\end{tikzcd}\]
		is $ab$ (not $ba$) from $\lambda$ to $\nu$. This convention is handy but not completely standard.
	\end{convention}
	\begin{remark}\label{rem:strong_hom}
		Ávila, Marín and Pinedo \cite[Definition 9]{avila_complete} call a morphism of groupoids $f\colon \Gg\to \Hh$ a `strong homomorphism' if, whenever $f^1(a)$ and $f^1(b)$ are consecutive, then also $a$ and $b$ are consecutive. 
		
		Notice that $f$ is a strong homomorphism in the above sense if and only if $f^0$ is injective. Indeed, if $f^0$ is injective then clearly $\target(f^1(a)) = f^0(\target(a))$ equals $\source(f^1(b)) = f^0(\source(b))$ if and only if $\target(a) = \source(b)$. Conversely, if $f^0(\lambda) = f^0(\mu)$ for some distinct vertices $\lambda$ and $\mu$, then $1_\lambda$ and $1_\mu$ are not composable, but $f^1(1_\lambda)$ and $ f^1(1_\mu)$ are; thus $f$ is not strong. 
		
		This justifies our terminology `strong morphism' from \cref{def:morphQuiv}.
	\end{remark}
	
	\subsection{Describing connected groupoids as a product}\label{subsec:Gp_and_CoarseGpD} Two important examples of groupoids, lying somewhat on two opposite extrema, are the \textit{bundles of groups} (see e.g.\@ \cite[\S 1.2]{andruskiewitsch2005quiver} or \cite[Definition 2.4]{ferri2024dynamical}) and the \textit{coarse groupoids} (see e.g.\@ \cite{andruskiewitsch2005quiver,GroupsToGroupoidsBrown}), the latter being also termed \textit{groupoids of pairs} in other works (see e.g.\@ \cite[Example 1.11]{introGroupoids} or \cite{ferri2024dynamical,FerriShibukawa}). 
	
	Given a set $\Lambda$, the coarse groupoid over $\Lambda$ is denoted by $\widehat{\Lambda}$, following \cite{ferri2024dynamical}. The isomorphism class of $\widehat{\Lambda}$ depends only on $\kappa = |\Lambda|$, thus we may write $\widehat{\kappa}$ when we only care about the groupoid $\widehat{\Lambda}$ up to isomorphism. For a number $n$, we usually enumerate the vertices of $\widehat{n}$ as $0,\dots, n-1$. In $\widehat{\Lambda}$, the unique arrow $\lambda\to \mu$ will be denoted by $[\lambda, \mu]$. One has $1_\lambda = [\lambda,\lambda]$, and $[\lambda,\mu][\mu,\nu] = [\lambda,\nu]$. We adopt the same notation for any other Schurian groupoid. 
	
	In some sense, every groupoid can be obtained from these two extremal cases. This is very well known; see e.g.\@ Brown \cite{GroupsToGroupoidsBrown}.
	
	\begin{remark} \label{rem:product_group_coarse_groupoid}
		Given a group $G$ and a set $\Lambda$, one can put on the quiver $G\times \widehat{\Lambda}$ the following groupoid structure:
		\[ (g\times [\lambda,\mu]) \cdot (h\times [\mu,\nu]) = gh\times [\lambda,\nu]. \]
	\end{remark}
	
	\begin{proposition}[{see e.g.\@ Brown \cite{GroupsToGroupoidsBrown}}] \label{prop:product_groupd_coarse_groupoid}
		Every connected groupoid $\Gg$ is isomorphic (non-ca\-no\-ni\-cally) to $\Gg_\lambda\times \widehat{\Gg^0}$, where $\Gg_\lambda$ is the isotropy group of any vertex $\lambda\in\Gg^0$.
	\end{proposition}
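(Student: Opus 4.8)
The plan is to fix a base vertex $\lambda \in \Gg^0$ and use the connectedness of $\Gg$ to choose, for every vertex $\mu \in \Gg^0$, a ``transport arrow'' $\tau_\mu \colon \lambda \to \mu$ in $\Gg$, normalised so that $\tau_\lambda = 1_\lambda$. This is the only place connectedness is used (it guarantees $\Gg(\lambda,\mu) \neq \emptyset$), and the selection of the family $\{\tau_\mu\}_{\mu \in \Gg^0}$ is exactly what makes the isomorphism non-canonical — it appeals to the Axiom of Choice when $\Gg^0$ is infinite. Recall from Remark~\ref{rem:product_group_coarse_groupoid} that $\Gg_\lambda \times \widehat{\Gg^0}$ is a groupoid with vertex set $\Gg^0$, arrow set $\Gg_\lambda \times (\Gg^0 \times \Gg^0)$, and multiplication $(g \times [\mu,\nu]) \cdot (h \times [\nu,\rho]) = gh \times [\mu,\rho]$.

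Next I would define $\Phi \colon \Gg_\lambda \times \widehat{\Gg^0} \to \Gg$ by $\Phi^0 = \id_{\Gg^0}$ on vertices and, on arrows, by
\[
\Phi(g \times [\mu,\nu]) = \tau_\mu^{-1}\, g\, \tau_\nu ,
\]
the product being taken in the diagrammatic order of Convention~\ref{conv:antiLeinbiz}: since $\tau_\mu^{-1}$ runs $\mu \to \lambda$, $g$ is a loop at $\lambda$, and $\tau_\nu$ runs $\lambda \to \nu$, this is a well-defined arrow $\mu \to \nu$, so $\Phi$ is a morphism of quivers. To check that $\Phi$ is a functor I would compute $\Phi(g \times [\mu,\nu]) \cdot \Phi(h \times [\nu,\rho]) = (\tau_\mu^{-1} g \tau_\nu)(\tau_\nu^{-1} h \tau_\rho)$ and collapse $\tau_\nu \tau_\nu^{-1} = 1_\lambda$ (using the source/target identities of Definition~\ref{def:groupoid}(iii)), obtaining $\tau_\mu^{-1}(g\,1_\lambda\,h)\tau_\rho = \tau_\mu^{-1}(gh)\tau_\rho = \Phi\bigl((g \times [\mu,\nu])(h \times [\nu,\rho])\bigr)$; one also notes $\Phi(1_\mu \times [\mu,\mu]) = \tau_\mu^{-1}\tau_\mu = 1_\mu$, so units are preserved.

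It then remains to see that $\Phi$ is fully faithful. For faithfulness: if $\tau_\mu^{-1} g \tau_\nu = \tau_{\mu'}^{-1} g' \tau_{\nu'}$, comparing sources and targets forces $\mu = \mu'$ and $\nu = \nu'$, and multiplying on the left by $\tau_\mu$ and on the right by $\tau_\nu^{-1}$ (again using $\tau_\mu\tau_\mu^{-1} = 1_\lambda = \tau_\nu\tau_\nu^{-1}$) gives $g = g'$. For fullness: given any arrow $a \colon \mu \to \nu$ in $\Gg$, the loop $g := \tau_\mu\, a\, \tau_\nu^{-1} \in \Gg_\lambda$ satisfies $\Phi(g \times [\mu,\nu]) = \tau_\mu^{-1}\tau_\mu\, a\, \tau_\nu^{-1}\tau_\nu = 1_\mu\, a\, 1_\nu = a$. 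Since $\Phi$ is also (literally) the identity on vertices, it is an isomorphism in $\Gpd$, with inverse $a \mapsto (\tau_\mu\, a\, \tau_\nu^{-1}) \times [\mu,\nu]$ for $a \colon \mu \to \nu$.

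I do not anticipate a genuine obstacle here: the content is the classical ``choose a spanning tree of arrows'' argument. The only thing demanding care is the bookkeeping forced by the diagrammatic composition order of Convention~\ref{conv:antiLeinbiz} — in particular remembering that for $\tau_\mu \colon \lambda \to \mu$ one has $\tau_\mu\tau_\mu^{-1} = 1_\lambda$ while $\tau_\mu^{-1}\tau_\mu = 1_\mu$ — and flagging, for set-theoretic scrupulousness, that the family $\{\tau_\mu\}$ is chosen via the Axiom of Choice, which accounts for the word ``non-canonically'' in the statement.
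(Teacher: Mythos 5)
Your proof is correct, and it is exactly the construction the paper has in mind: the paper gives no formal proof (deferring to Brown) but the paragraph following the Proposition describes the same two choices — a base vertex $\lambda$ and a maximal Schurian subgroupoid, the latter being precisely your family $\{\tau_\mu\}$ packaged as the arrows $[\mu,\nu]=\tau_\mu^{-1}\tau_\nu$. Your bookkeeping with the diagrammatic order (in particular $\tau_\mu\tau_\mu^{-1}=1_\lambda$ versus $\tau_\mu^{-1}\tau_\mu=1_\mu$) is right; the only slip is writing the unit of $\Gg_\lambda\times\widehat{\Gg^0}$ at $\mu$ as $1_\mu\times[\mu,\mu]$ where the group component should be the identity of $\Gg_\lambda$, which does not affect the argument.
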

	In \cref{prop:product_groupd_coarse_groupoid}, two things need to be chosen in $\Gg$ in order to define the isomorphism: a vertex $\lambda\in\Gg^0$, and a maximal Schurian subgroupoid of $\Gg$, which exists and is wide, and hence is a coarse groupoid because $\Gg$ is connected (see \cite[Remark 5.6 and Lemma 5.7]{ferri2024dynamical}). Now, if we identify with $[\lambda,\mu]$ the unique arrow from $\lambda$ to $\mu$ in the chosen maximal Schurian subgroup of $\Gg^1$, then the isotropy group $\Gg_\mu$ is identified with $\Gg_\lambda$ by sending $g\in\Gg_\lambda$ to the loop $[\mu,\lambda] g [\lambda,\mu]\in\Gg_\mu$. In some sense, then, $\Gg$ resembles a semidirect product via an action by conjugation. We shall make this insight more precise in \cref{rem:prod_GP_times_Set_is_crossed}.
	
	Let $\Gg$ be connected, and $f\colon \Gg\to \Hh$ be a morphism. The image of $f$ is entirely contained in a connected component, thus we also assume $\Hh$ connected, without loss of generality. Choose $\lambda \in\Gg^0$. Then the morphism $f$ induces morphisms $f^\Gp$ and $f^\Set$, in $\Gp$ and $\Set$ respectively, by $f^\Gp = f|_{\Gg_\lambda}^{\Hh_{f^0(\lambda)}}$ and $f^\Set = f^0$.
	\begin{proposition}\label{prop:map_product_group_coarse}	
		Let $\Gg$ and $\Hh$ be connected, with chosen vertices $\lambda\in\Gg^0 = \Lambda$ and $\mu\in\Hh^0 = \Mu$, and chosen maximal Schurian sugroupoids $\Gg',\Hh'$, thus inducing isomorphisms $\Gg \cong G\times \widehat{\Lambda}$ and $\Hh \cong H\times \widehat{\Mu}$, with $G = \Gg_\lambda$ and $H = \Hh_\mu$. 
		
		From every pair of morphisms $(\alpha,\beta)\colon G\times (\Lambda,\lambda)\to H\times (\Mu,\mu)$ in $\Gp\times \Set^*$, there exists a morphism $f\colon \Gg\to \Hh$ in $\Gpd$ satisfying $(f^\Gp,f^\Set)=(\alpha,\beta)$ and $f^0(\lambda) = \mu$.
	\end{proposition}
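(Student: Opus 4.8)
The plan is to build $f$ explicitly through the product decompositions and then read off the two induced morphisms by inspection. First I would fix the isomorphisms $\phi\colon\Gg\xrightarrow{\ \sim\ }G\times\widehat\Lambda$ and $\psi\colon\Hh\xrightarrow{\ \sim\ }H\times\widehat\Mu$ determined by the chosen vertices and maximal Schurian subgroupoids, as in Proposition~\ref{prop:product_groupd_coarse_groupoid}, recalling that — by construction — both are the identity on vertices and restrict to the identity on the isotropy group at the distinguished vertex (so $\phi^0=\id_\Lambda$, $\phi|_{\Gg_\lambda}=\id_G$, and likewise for $\psi$). It then suffices to produce a morphism $\widetilde f\colon G\times\widehat\Lambda\to H\times\widehat\Mu$ in $\Gpd$ with $\widetilde f^0=\beta$ (hence $\widetilde f^0(\lambda)=\mu$) and $\widetilde f$ restricting to $\alpha$ on the isotropy at $\lambda$; then $f:=\psi^{-1}\widetilde f\,\phi$ will do the job.

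Next I would define $\widetilde f$ using the normal form of Remark~\ref{rem:product_group_coarse_groupoid}: every arrow of $G\times\widehat\Lambda$ is uniquely of the form $g\times[\lambda',\mu']$, and I set $\widetilde f^0=\beta$ on vertices and $\widetilde f^1(g\times[\lambda',\mu'])=\alpha(g)\times[\beta(\lambda'),\beta(\mu')]$ on arrows. I would then check that this is well defined and a morphism of quivers — an arrow of $\widehat\Mu$ is pinned down by its endpoints, and the formula is manifestly compatible with $\source$ and $\target$, which are computed by applying $\beta$ — and that it is functorial, which reduces to the identity $\widetilde f^1\bigl((g\times[\lambda',\mu'])(h\times[\mu',\nu'])\bigr)=\alpha(gh)\times[\beta(\lambda'),\beta(\nu')]$ together with $\alpha(gh)=\alpha(g)\alpha(h)$ and the multiplication rule of Remark~\ref{rem:product_group_coarse_groupoid} applied upstairs and downstairs.

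Finally I would read off the induced morphisms: $\widetilde f^{\Set}=\widetilde f^0=\beta$, and $\widetilde f^0(\lambda)=\beta(\lambda)=\mu$ since $\beta$ is a morphism of pointed sets $(\Lambda,\lambda)\to(\Mu,\mu)$; while the isotropy of $G\times\widehat\Lambda$ at $\lambda$ consists of the arrows $g\times 1_\lambda$, and $\widetilde f^1(g\times 1_\lambda)=\alpha(g)\times 1_\mu$ — again using $\beta(\lambda)=\mu$ — so that $\widetilde f^{\Gp}=\alpha$ after the canonical identifications $(G\times\widehat\Lambda)_\lambda=G$ and $(H\times\widehat\Mu)_\mu=H$. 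Transporting along $\phi,\psi$ then gives $f$ with $(f^{\Gp},f^{\Set})=(\alpha,\beta)$ and $f^0(\lambda)=\mu$.

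The construction and its verification are routine; the one place that needs genuine care is the reduction in the first step, since restriction to isotropy groups is only functorial up to a shift of basepoint along $f^0$ — if $f=hg$ then $f^{\Gp}$ is $h^{\Gp}$ taken at the vertex $g^0(\lambda)$, precomposed with $g^{\Gp}$ — so one must make sure that $\phi$ and $\psi$ really are the identity on vertices and on the distinguished isotropy group, which is exactly how the isomorphisms of Proposition~\ref{prop:product_groupd_coarse_groupoid} are built. Alternatively one can sidestep this entirely by writing every $a\in\Gg^1$ with $a\colon\lambda'\to\mu'$ uniquely as $[\lambda',\lambda]\,g\,[\lambda,\mu']$ with $g\in\Gg_\lambda$ (using the chosen Schurian subgroupoid of $\Gg$), and defining $f$ directly on $\Gg$ by the analogous formula $f^1(a)=[\beta(\lambda'),\mu]\,\alpha(g)\,[\mu,\beta(\mu')]$ in $\Hh$.
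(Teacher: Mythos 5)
Your construction is exactly the paper's: the proof there simply defines $f^0=\beta$ and $f^1(g\times[\lambda',\mu'])=\alpha(g)\times[\beta(\lambda'),\beta(\mu')]$ and declares the verifications immediate, which is the formula you give for $\widetilde f$ (the paper works directly in the product decomposition rather than spelling out the transport along $\phi,\psi$). Your added care about the basepoint and the explicit alternative formula $f^1(a)=[\beta(\lambda'),\mu]\,\alpha(g)\,[\mu,\beta(\mu')]$ are correct elaborations of the same argument.
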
 
	\begin{proof}
		Define $f^0= \beta$ and $f^1(g\times [\lambda,\mu]) = \alpha(g)\times [\beta(\lambda),\beta(\mu)]$, where $[a,b]$ is the unique arrow in $\Gg'$ from $a $ to $b$. The verifications are immediate.
	\end{proof}
	The connected groupoids form a full subcategory $\Gpdconn $ of $\Gpd$. Because the choices of $\lambda\in\Gg^0$ and of a maximal Schurian subgroupoid $\widehat{\Lambda}$ need to be made,  \cref{prop:product_groupd_coarse_groupoid,prop:map_product_group_coarse} do not exactly yield an equivalence between $\Gpdconn$ and $\Gp\times \Set$, although they get very close.
	\subsection{On the geometry of groupoids}\label{par:geometry_groupoids} A \textit{complete quiver of degree $d$} is a quiver that has, for every pair of (non necessarily distinct) vertices, exactly $d$ arrows between them \cite[Definition 2.9]{ferri2024dynamical}.
	
	It is well known that every groupoid decomposes as a disjoint union of \textit{connected components} \cite[\S 2.1]{ferri2024dynamical}, where each component is a complete quiver of some degree, and these degrees need not be all the same \cite[\S 2.2]{ferri2024dynamical}.
	
	\subsection{Subgroupoids} A \textit{subgroupoid} of $\Gg$ is a subquiver of $\Gg$ that becomes a groupoid with the restricted operation. We say that a subgroupoid is \textit{full}, resp.\@ \textit{wide}, if it is a full, resp.\@ a wide subquiver.
	
	\subsection{On the images of groupoid morphisms} For a functor $f\colon \Cc\to \Dd$ between categories, it is known that the image need not be a subcategory of $\Dd$: indeed, even if $f^1(ab) = f^1(a)f^1(b)$ holds for all consecutive arrows $a,b\in \Cc^1$, it may very well happen that $f^1(a)$ and $f^1(b)$ are consecutive in $\Dd$ without $a$ and $b$ being consecutive in $\Cc$; see \cite[Example 3.8]{GeneralizedCongruences}. This is avoided, of course, if $f^0$ is injective.
	
	Morphisms of groupoids enjoy the same property---or suffer from the same issue. The image of a morphism need not be a subgroupoid, and an example of this behaviour is reported in \cref{fig:image_not_subgroupoid,fig:image_not_subgroupoid_conn}.
	\begin{figure}[t]
		\[\begin{tikzcd}
			\lambda & \mu &{}&{}& {f^0(\lambda)} && {f^0(\mu)=f^0(\mu')} \\
			{\lambda'} & {\mu'} &&&& {f^0(\lambda')}
			\arrow["f",thick,from=1-3, to=1-4,shift right=10]
			\arrow[from=1-1, to=1-1, loop, in=150, out=210, distance=5mm]
			\arrow["a"{description}, bend left=10, from=1-1, to=1-2]
			\arrow[bend left=10, from=1-2, to=1-1]
			\arrow[from=1-2, to=1-2, loop, in=330, out=30, distance=5mm]
			\arrow[from=1-5, to=1-5, loop, in=105, out=165, distance=5mm]
			\arrow["{f^1(a)}"{description}, bend left=10, from=1-5, to=1-7]
			\arrow["x"{description}, bend left=10, from=1-5, to=2-6]
			\arrow[bend left=10, from=1-7, to=1-5]
			\arrow[from=1-7, to=1-7, loop, in=60, out=120, distance=5mm]
			\arrow["{f^1(b)}"{description}, bend left=10, from=1-7, to=2-6]
			\arrow[from=2-1, to=2-1, loop, in=150, out=210, distance=5mm]
			\arrow[ bend left=10, from=2-1, to=2-2]
			\arrow["b"{description},bend left=10, from=2-2, to=2-1]
			\arrow[from=2-2, to=2-2, loop, in=330, out=30, distance=5mm]
			\arrow[bend left=10, from=2-6, to=1-5]
			\arrow[bend left=10, from=2-6, to=1-7]
			\arrow[from=2-6, to=2-6, loop, in=240, out=300, distance=5mm]
		\end{tikzcd}\]
		\caption{A morphism of groupoids whose image is not a subgroupoid. Since the underlying quivers are Schurian, the groupoid structures are unambiguous. One has $x = f^1(a)f^1(b)$, but $x$ does not lie in the image of $f^1$.}\label{fig:image_not_subgroupoid}
	\end{figure}
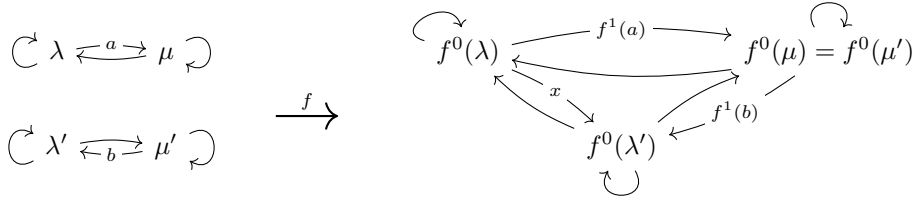
	\begin{figure}[h]
		\[\begin{tikzcd}
			\lambda & \mu & {} & {} & \bullet
			\arrow[ from=1-1, to=1-1, loop, in=150, out=210, distance=5mm]
			\arrow["a"{description}, bend left=10, from=1-1, to=1-2]
			\arrow[ bend left=10, from=1-2, to=1-1]
			\arrow[from=1-2, to=1-2, loop, in=330, out=30, distance=5mm]
			\arrow[thick,"f", from=1-3, to=1-4]
			\arrow["3"{description}, from=1-5, to=1-5, loop, in=45, out=135, distance=22mm]
			\arrow["2"{description}, from=1-5, to=1-5, loop, in=50, out=130, distance=15mm]
			\arrow["1"{description}, from=1-5, to=1-5, loop, in=55, out=125, distance=10mm]
			\arrow["0"{description}, from=1-5, to=1-5, loop, in=60, out=120, distance=5mm]
		\end{tikzcd}\]
		\caption{An example of a morphism $\Gg\to \Hh$ with $\Gg$ connected, such that the image is not a subgroupoid of $\Hh$. Here $\Hh = \Z/4\Z$, and $f^1(a) = 1$, $f^1(a^{-1})= 3$. Observe that $a^2$ is not defined in $\Gg$, while $f^1(a)^2 =2$ is defined in $\Hh$.}
		\label{fig:image_not_subgroupoid_conn}
	\end{figure}
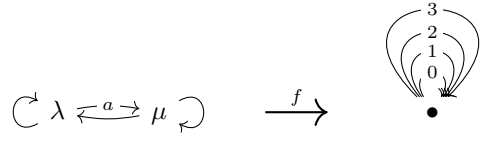
	
	\subsection{Mo\-no\-mor\-phisms and epimorphisms in $\maybebm{\Gpd}$} Mo\-no\-mor\-phisms, resp.\@ epimorphisms in $\Gpd$ are exactly the groupoid morphisms that are monomorphisms, resp.\@ epimorphisms in $\Cat$. 
	
	Let $f\colon \Cc\to \Dd$ be a functor between small categories. Monomorphisms are easier to handle: $f$ is a monomorphism in $\Cat$ if and only if it is faithful and injective on the objects. The same characterisation holds for monomorphisms in $\Gpd$.
	
	Epimorphisms are much more complex. Since the image of $f$ is generally not a subcategory of $\Dd$, we denote by $\points{\im(f)}$ the subcategory of $\Dd$ on $f^0(\Cc^0)$ generated by the subquiver $\im(f) = \left(f^1(\Cc^1)\rightrightarrows f^0(\Cc^0)\right)$. If $\points{\im(f)} = \Dd$, then $f$ is an epimorphism in $\Cat$; but the converse is not true \cite[\S 2]{GeneralizedCongruences}. A characterisation of epimorphisms in $\Cat$ is given by Isbell \cite{IsbellEpimorphisms3}. In $\Gpd$, however, the situation is less complicated.
	
	\begin{proposition}\label{prop:epimorphisms}
		Let $f\colon \Gg\to \Hh$ be a morphism of groupoids. Then $f$ is an epimorphism in $\Gpd$ if and only $\points{\im(f)} = \Hh$.
	\end{proposition}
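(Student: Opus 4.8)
The plan is to prove the two implications separately. The forward one, ``$\points{\im(f)} = \Hh \Rightarrow f$ is an epimorphism,'' uses nothing special about groupoids: if $\alpha,\beta\colon\Hh\to\Kk$ satisfy $\alpha f = \beta f$, then the hypothesis gives $\Hh^0 = f^0(\Gg^0)$, so $\alpha^0 = \beta^0$ on all of $\Hh^0$, while $\alpha^1$ and $\beta^1$ agree on $f^1(\Gg^1)$ and hence, being functorial, on every finite composite of arrows of $\im(f)$ — which by hypothesis is all of $\Hh^1$. So $\alpha = \beta$.

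For the converse I would argue contrapositively. Put $\Nn := \points{\im(f)}$; since the subquiver $\im(f)$ is closed under inversion (as $f^1(a^{-1}) = f^1(a)^{-1}$), so is the subcategory it generates, which is thus a \emph{subgroupoid} of $\Hh$. Assuming $\Nn\subsetneq\Hh$, I will produce a groupoid $\Kk$ and two \emph{distinct} morphisms $\alpha,\beta\colon\Hh\rightrightarrows\Kk$ that coincide on $\Nn$, hence on $\im(f)$; as $\alpha f = \beta f$ then follows, $f$ is not an epimorphism. I split into two cases according to whether there exists an arrow $g\in\Hh^1\setminus\Nn^1$ with $\source(g)\in\Nn^0$. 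If there is none, then $\Nn^1$ being closed under inversion, \emph{every} arrow of $\Hh$ incident to a vertex of $\Nn^0$ already lies in $\Nn^1$; this forces $\Nn^0$ to be a union of connected components of $\Hh$, with $\Nn$ the full subgroupoid on them, and since $\Nn\neq\Hh$ some component $C$ of $\Hh$ is disjoint from $\Nn$. I then take $\Kk := \Hh\sqcup\overline C$ with $\overline C$ a disjoint isomorphic copy of $C$, and — using that $\Hh$ is the coproduct of its connected components — let $\alpha$ be the inclusion and let $\beta$ be the inclusion on every component other than $C$, together with a fixed isomorphism $C\xrightarrow{\ \sim\ }\overline C$ on $C$. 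These are distinct morphisms of groupoids agreeing on $\Hh\setminus C\supseteq\Nn$.

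The remaining case — a fixed arrow $g\colon\rho\to\sigma$ in $\Hh^1\setminus\Nn^1$ with $\rho\in\Nn^0$ — is the crux, and I would treat it by a groupoid analogue of the classical argument that a proper subgroup is never an epimorphism of groups. For $\lambda\in\Hh^0$ let $E_\lambda$ be the set of $\Nn$-orbits $[n]$ of arrows $n\in\Hh^1$ with $\source(n)\in\Nn^0$ and $\target(n)=\lambda$ (for the relation $n\sim an$, $a\in\Nn^1$), together with one extra symbol $*_\lambda$; let $\Sigma$ be the small groupoid with object set $\Hh^0$ and $\Sigma(\lambda,\mu):=\mathrm{Bij}(E_\lambda,E_\mu)$. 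Right translation, $[n]\mapsto[nh]$ and $*_\lambda\mapsto *_\mu$ for $h\colon\lambda\to\mu$, assembles into a morphism $\alpha\colon\Hh\to\Sigma$. Let $\tau_\lambda\in\mathrm{Bij}(E_\lambda)$ transpose $[1_\lambda]$ with $*_\lambda$ if $\lambda\in\Nn^0$ and be the identity otherwise, and set $\beta(h):=\tau_{\target(h)}\circ\alpha(h)\circ\tau_{\source(h)}^{-1}$; one checks $\beta$ is again a morphism of groupoids. It agrees with $\alpha$ on $\Nn$, because for $n\in\Nn^1$ one has $[1_{\target(n)}]=[n]$ — this is exactly where invertibility inside $\Nn$ is used — so $\alpha(n)$ carries $\{[1_{\source(n)}],*_{\source(n)}\}$ onto $\{[1_{\target(n)}],*_{\target(n)}\}$ respecting the labels, and thus commutes with the $\tau$'s, whence $\beta(n)=\alpha(n)$. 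But $\alpha(g)\neq\beta(g)$: one has $\alpha(g)([1_\rho])=[g]$, which lies \emph{outside} $\{[1_\sigma],*_\sigma\}$ precisely because $g\notin\Nn^1$, while $\beta(g)([1_\rho])$ lies \emph{inside} $\{[1_\sigma],*_\sigma\}$ by construction. Hence $\alpha\neq\beta$, and $f$ is not an epimorphism.

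I expect the bulk of the work — and the main place an error could hide — to be this last case: checking that the orbit sets $E_\lambda$, the translations, and $\beta$ are well defined and functorial, and verifying carefully both the compatibility $\beta|_\Nn=\alpha|_\Nn$ (the one spot where the groupoid hypothesis is genuinely needed) and the inequality $\alpha(g)\neq\beta(g)$. The first case and the forward direction are comparatively routine; cardinality causes no trouble, since $\Sigma$ is a legitimate small groupoid even where some of its hom-sets are empty.
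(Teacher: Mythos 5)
Your proof is correct, but it takes a genuinely different route from the paper's. The paper first reduces to the case of $\Hh$ connected, invokes the decomposition $\Hh\cong \Hh_\lambda\times\widehat{\Mu}$ of Proposition \ref{prop:product_groupd_coarse_groupoid}, and then splits into three cases (a whole component missed, a vertex missed, an arrow missed); the crucial last case is settled by quoting the classical fact that a proper subgroup inclusion is not an epimorphism in $\Gp$ and lifting the resulting pair of group homomorphisms back to groupoid morphisms. You instead bypass the structure theorem entirely and \emph{oidify} the standard permutation-representation proof of that group-theoretic fact: your groupoid $\Sigma$ of bijections between the coset sets $E_\lambda$, with $\beta$ obtained from $\alpha$ by conjugating with the transpositions $\tau_\lambda$, is exactly the groupoid analogue of the coset argument, and it handles missing vertices and missing arrows uniformly in a single case. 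What the paper's route buys is brevity, given machinery it has already built and the group fact taken as known; what yours buys is self-containedness (you re-prove rather than cite the group fact) and independence from the $G\times\widehat{\Lambda}$ decomposition. Your ``missing component'' case also differs cosmetically (you duplicate the component where the paper collapses it onto a vertex of the image); both work. One small point worth making explicit in your crux case: if $\sigma=\target(g)\notin\Nn^0$ then $[1_\sigma]$ is not an element of $E_\sigma$ at all, so $\tau_\sigma=\mathrm{id}$ and $\beta(g)([1_\rho])=*_\sigma$, which still differs from $\alpha(g)([1_\rho])=[g]$; your phrasing covers this, but it deserves a sentence.
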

	\begin{proof}
		It suffices to prove that, if $\points{\im(f)}$ is strictly contained in $\Hh$, then there exist  morphisms $\alpha,\beta\colon \Hh\to \Ii$ such that $\alpha f = \beta f$ but $\alpha\neq\beta$.
		
		If $\Hh$ contains an entire connected component $\Nn$ that is disjoint from $\points{\im(f)}$, then the conclusion follows very easily: choose a vertex $\lambda\in\points{\im(f)}^0$, take $\alpha\colon \Hh\to \Hh$ to be the morphism that sends $\Nn^0 $ to $\lambda$ and $\Nn^1$ to $1_\lambda$, and does nothing on the rest of the quiver; and take $\beta = \id_\Hh$. Clearly, $\alpha f = \beta f$ but $\alpha\neq \beta$.
		
		Therefore, we assume without loss of generality that every connected component of $\Hh$ intersects $\points{\im(f)}$. Observe that two distinct connected components of $\Hh$ cannot intersect the same connected component of $\points{\im(f)}$. Thus, up to breaking down the groupoids into suitable disjoint unions, we may safely assume that $\Hh$ is connected, and $\Gg =\bigsqcup_{i\in I} \Gg_i$ where the $\Gg_i$'s are connected. Let $\Gg_i \cong G_i\times \widehat{\Lambda_i}$ as in \cref{prop:product_groupd_coarse_groupoid}, and fix a vertex $\lambda$ in $\points{\im(f)}^0$, so that $\Hh \cong \Hh_\lambda\times \widehat{\Mu}$.
		
		Suppose that there is a vertex $\mu$ in $\Mu\smallsetminus \points{\im(f)}^0$. Let $\alpha$ be defined by means of $(\alpha^\Gp,\alpha^\Set) = (\id_{\Hh_\lambda}, q)$ where $q\colon \Mu\to \Mu$ is the surjection identifying $\lambda$ and $\mu$, and leaving the other vertices unchanged. Let $\beta = \id_\Hh$. Clearly $\alpha f = \beta=f$ but $\alpha \neq \beta$.
		
		Suppose now that there is an arrow $a$ in $\Hh^1\smallsetminus \points{\im(f)}^1$. Thus there is a loop $g\in \Hh_\lambda \smallsetminus \points{\im(f)}^1$ such that $a$ corresponds to $[\source(a), \lambda]\cdot g \cdot [\lambda,\target(a)]$. This means that the injection $\iota^\Gp\colon \points{\im(f)}_\lambda\to \Hh_\lambda$ is not surjective, and hence it is not an epimorphism in the category of groups: thus one can find groups $K_1, K_2$ and homomorphisms $\alpha^\Gp\colon \Hh_\lambda\to K_1$, $\beta^\Gp\colon \Hh_\lambda\to K_2$, such that $\alpha^\Gp \iota^\Gp = \beta^\Gp \iota^\Gp$, but $\alpha^\Gp\neq \beta^\Gp$. Choose $\alpha^\Set = \beta^\Set=\id_\Lambda$,  define $\alpha\colon \Hh\to K_1\times\widehat{\Lambda}$ and $\alpha\colon \Hh\to K_2\times\widehat{\Lambda}$ by means of $\alpha^\Gp$, $\alpha^\Set$, $\beta^\Gp$, $\beta^\Set$. Again, $\alpha f = \beta f $ but $\alpha\neq \beta$.	
	\end{proof}
	
	\subsection{Normal subgroupoids} In analogy with normal subgroups, a notion of normal subgroupoid has been defined. This encloses the notion of a normal subgroup bundle \cite[\S 1.2]{andruskiewitsch2005quiver}.
	
	\begin{definition}[{see \cite[\S 1]{BrownFibrations}}] \label{def:normal_subgroupoid}
		A subgroupoid $\Nn$ of $\Gg$ is \textit{normal} if it is wide, and $a  \Nn_{\target(a)} a^{-1} \subseteq \Nn_{\source(a)}$ for all $a\in\Gg^1$.
	\end{definition}
	
	For a quiver $Q$, following the notation of \cite{ferri2024dynamical} let $Q^\circlearrowright$ be the wide subquiver of $Q$ whose arrows are exactly the loops of $Q$. This is the object that Andruskiewitsch denotes by $Q^{\text{bundle}}$, see \cite{andruskiewitsch2005quiver}.
	
	\begin{remark}
		A wide subgroupoid $\Nn$ of $\Gg$ is normal in $\Gg$, if and only if $\Nn^\circlearrowright$ is normal in $\Gg$, if and only if $\Nn^\circlearrowright$ is a normal subgroup bundle in the sense of \cite[\S 1.2]{andruskiewitsch2005quiver}. This is obvious from the fact that \cref{def:normal_subgroupoid} amounts exclusively to a condition on the loops of $\Nn$; see \cite[Lemma 3.1]{paques2018galois}.
	\end{remark}
	
	\subsection{Quotients} We now describe the quotients with respect to normal sub\-grou\-poids, following again \cite{BrownFibrations,paques2018galois}.
	\begin{definition}[{cf.\@ \cite[Lemma 3.8]{paques2018galois}}] \label{def:groupoid_quotients}
		Let $\Nn$ be a subgroupoid of $\Gg$. Define the \textit{left quotient} $\Nn\!\bbslash\! \Gg$ as the quotient quiver of $\Gg$ modulo the equivalence pair $(\sim_L,\approx)$ given by
		\begin{align*}& x\sim_L y \iff y^{-1}  x\text{ is defined, and lies in }\Nn; \\
			& \lambda\approx \mu \iff \text{there exists }n\in\Nn\text{ such that }\source(n)=\lambda,\target(n) = \mu.\end{align*}
		
		One may similarly give the definition of the \emph{right quotient} $\Gg\!\sslash\!\Nn$. The \textit{two-sided quotient} $\Gg / \Nn$ is the quotient of $\Gg$ modulo the equivalence pair $(\sim,\approx)$, where $\approx$ is defined as above, and 
		\begin{align*}
			& x\sim y \iff \text{there exist }n,m\in\Nn\text{ such that }nym=x.
		\end{align*}
	\end{definition}
	\begin{remark}If $\Nn$ is a normal subgroupoid of $\Gg$, then the quotient quivers $\Nn\!\bbslash\! \Gg$, $\Gg\!\sslash\!\Nn$, and $\Gg/ \Nn$ inherit a groupoid structure from $\Gg$, as follows. 
		\begin{description}
			\item[Left quotient] If $[g]_L$ and $[h]_L$ are two $\sim_L$-equivalence classes that are consecutive in $\Nn \!\bbslash\!\Gg$, this means that the composition $gn_1h$ is well-defined in $\Gg$, for some $n_1\in \Nn$. We thereby define $[g]_L\cdot [h]_L = [gn_1 h]_L$. The definition is well-posed, because if $n_2\in\Nn$ is another arrow such that $g n_2 h$ is defined, one has $(gn_2 h)^{-1} (gn_1 h) = h^{-1} n_2^{-1}n_1 h \in \Nn^\circlearrowright$, because $n_2^{-1} n_1$ is a loop in $\Nn^\circlearrowright$, and $\Nn$ is normal.
			\item[Right quotient] The groupoid structure is analogous to the one on left quotients.
			\item[Two-sided quotient] If $[g]$ and $[h]$ are two $\sim$-equivalence classes that are consecutive in $\Gg/\Nn$, this means that the composition $n_1 g m_1 n_2 h m_2$ exists in $\Gg$ for arrows $n_1, m_1, n_2, m_2\in\Nn$. Disregarding the superfluous arrows, we thereby define $[g]\cdot [h] = [gm_1n_2 h]$; see \cite{BrownBookGroupoids}. As above, it is easy to check the good definition.
		\end{description}
	\end{remark}
	
	\begin{remark}
		Let $G$ be a group and $N\triangleleft G$ be a normal subgroup. It is well known that, for $x,y\in G$, one has $x\sim y$ if and only if $x\sim_L y$. Indeed, one implication is trivial. As for the other one, $x = nym$ with $n,m\in N$ implies $y^{-1}n^{-1}x = m \in N$, thus $y^{-1} xx^{-1}n^{-1}x  = m \in N$. But $x^{-1}n^{-1} x$ lies in $N$, by normality: thus $y^{-1}x\in N$ as desired. Therefore $N\!\bbslash\! G = G/N = G\!\sslash\! N$.
		
		In the case of groupoids, the implication 
		\[ y^{-1}n^{-1}x = m  \implies y^{-1} xx^{-1}n^{-1}x  = m \]
		would fail in general: indeed $y^{-1}$ and $x$ are composable if and only if $n$ is a loop (see \cref{fig:two-sided_quotient}). Using the same proof as for groups, one can see that $\Nn\!\bbslash\!\Gg = \Gg/\Nn = \Gg\!\sslash\! \Nn$ still holds if $\Nn = \Nn^\circlearrowright$.
		
		\begin{figure}[t]
			\[\begin{tikzcd}
				\bullet & \bullet && \bullet & \bullet
				\arrow["n", from=1-1, to=1-2]
				\arrow[""{name=0, anchor=center, inner sep=0}, "x"', bend left=40, from=1-1, to=1-5]
				\arrow[""{name=1, anchor=center, inner sep=0}, "y", from=1-2, to=1-4]
				\arrow["m", from=1-4, to=1-5]
			\end{tikzcd}\]
			\caption{When $\Nn$ is not a subgroup bundle, the implication $x\sim_Ly \implies x\sim y$ fails in general.}\label{fig:two-sided_quotient}
		\end{figure}
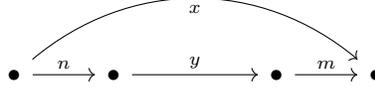
	\end{remark}
	
	\begin{example}
		Consider the coarse groupoid $\Gg = \widehat{6}$, and the normal subgroupoid $\Nn = \widehat{\{0,1,2\}}\sqcup \widehat{\{3,4,5\}}$. In $\Nn\!\bbslash\!\Gg$, two arrows $[i,j]$ and $[i',j']$ are equivalent if and only if $i = i'$ and the vertices $j,j'$ lie in the same connected component of $\Nn$. Thus $\Nn\!\bbslash\!\Gg$ is a complete quiver of degree $3$ on two vertices. On the other hand, $\Gg/\Hh\cong \widehat{2}$. 
	\end{example}

	The natural projection $\pi = (\pi^1,\pi^0)\colon \Gg\to \Gg/\Nn$ is an epimorphism of groupoids with kernel $\Nn$; see \cite[Lemmata 3.8 and 3.12]{paques2018galois} and \cite[\S 1]{BrownFibrations}. Conversely, if $f$ is a morphism of groupoids, then $\ker(f)$ is normal. Thus the normal subgroupoids are precisely the kernels of the morphisms.
	
	Observe that $\ker(f)$ is a bundle of loops if and only if $f^0$ is injective. 
	
	Every morphism of quivers $f\colon Q\to R$ that is injective on the vertices can be read as a strong morphism over $Q^0$, from $Q$ to $\points{\im(f)} = \im(f)$. Thus the normal subgroup bundles are exactly the kernels of the strong morphisms in $\Gpd_\Lambda$, which in turn are exactly the kernels of the morphisms that are injective on the vertices.\footnote{This is already proven in  \cite[Proposition 1.2]{BrownFibrations}. While reading  the work of R.\@ Brown, however, it seems to us that the word \textit{discrete} is used improperly for these kernels. A groupoid is discrete if it is a bundle of units, while in  \cite[Proposition 1.2]{BrownFibrations} and the precedent paragraphs the word `discrete' is used for objects that are simply bundles of groups.}
	\begin{remark}
		Even though the normality of $\Nn$ depends solely on $\Nn^\circlearrowright$, the quotients $\Gg/\Nn$ and $\Gg/\Nn^\circlearrowright$ are usually different: in particular, they usually have different sets of vertices. The same holds for the left and right quotients.
	\end{remark}
	\begin{example}
		Consider the groupoid $\Nn_{4,5,6,7}\cong \widehat{4}$ of \cite[Example 4.27]{ferri2024dynamical}, and the wide subquiver $\Nn$ that only includes the arrows labelled `$0$' or `$2$'. This is a normal subgroupoid, because $\Nn^\circlearrowright = 1_{\Nn_{4,5,6,7}}$ is the loop bundle of the units. It is easy to see that $\Nn_{4,5,6,7}/\Nn$ is the coarse groupoid on two vertices, while $\Nn_{4,5,6,7}/\Nn^\circlearrowright = \Nn_{4,5,6,7}$ has four vertices.
	\end{example}
	
	Famously, the First Isomorphism Theorem does \textit{not} hold in $\Gpd$; meaning that the image of a morphism does actually depend on the map, and not only on its domain and its kernel. Some counterexamples are given in Brown \cite[\S 4]{GroupsToGroupoidsBrown}, and many others are easy to figure out. 
	\begin{example}
		The easiest counterexample to the First Isomorphism Theorem in $\Gpd$ is the epimorphism
		\[\begin{tikzcd}
			{\Gg=} & \lambda & \mu & {} & {} & \bullet & {=\Hh}
			\arrow["{1_\lambda}", from=1-2, to=1-2, loop, in=55, out=125, distance=10mm]
			\arrow["{1_\mu}", from=1-3, to=1-3, loop, in=55, out=125, distance=10mm]
			\arrow[shift left, "\pi", two heads, from=1-4, to=1-5]
			\arrow["{1_\bullet}", from=1-6, to=1-6, loop, in=55, out=125, distance=10mm]
		\end{tikzcd}\]
		sending $\lambda,\mu\mapsto \bullet$. Clearly $\ker(\pi) = \Gg = \One_\Gg$, and $\Gg/\One_\Gg \cong \Gg \ncong \Hh$. Observe moreover that the map $\pi$ is particularly well-behaved, since it also admits a section (the map $\bullet \mapsto \lambda$, $1_\bullet \mapsto 1_\lambda$) which is a monomorphism of groupoids.
	\end{example}
	\begin{example}[{see \cite[\S4]{GroupsToGroupoidsBrown}}]  Consider, just like in \cref{fig:image_not_subgroupoid_conn}, the (non-splitting) epimorphism $f$ from the coarse groupoid $\Gg = \widehat{2}$ to a cyclic group $\Z/n\Z$. Then $\points{\im(f)}$ is isomorphic to $\Gg/\ker(f)$ if and only if $n=1$. 
	\end{example}
	\begin{definition}
		We call \textit{\textsc{fit} sequence} a short exact sequence of groupoids $\Nn\to \Gg\to \Hh$ satisfying the First Isomorphism Theorem, i.e.\@ such that the morphism $\Gg\to\Hh$ induces an isomorphism $\Hh \cong \Gg/\Nn$.
	\end{definition}
	\begin{remark}We recall from \cite[Theorem 1]{avila_complete} that the First Isomorphism Theorem holds true in $\Gpd_\Lambda$, thus every short exact sequence in $\Gpd_\Lambda$ is \textsc{fit}.\end{remark}
	
	\subsection{On the geometry of quotients} For every groupoid $\Gg$, the maximal loop subbundle $\Gg^\circlearrowright$ is clearly normal. The two-sided quotient $\Gg/\Gg^\circlearrowright$ is a Schurian groupoid (actually, already the left quotient $\Gg^\circlearrowright\!\bbslash\!\Gg$ is Schurian): indeed, if $\lambda,\mu\in\Gg^0$ are two vertices, and $x, y\in \Gg(\lambda,\mu)$ are two arrows, then $x^{-1}y$ lies in $\Gg^\circlearrowright$, thus $x$ and $y$ are equivalent.
	
	Let $\Gg$ be a groupoid, and $\Gg'$ be a maximal Schurian subgroupoid, which is a wide coarse subgroupoid. The quotient $\Gg/\Gg'$ has as many vertices as the connected components of $\Gg$. If $\Gg$ is complete of degree $d$, then $\Gg/\Gg'$ is a looped vertex of degree $d$. 
	
	The two above situations are the extremal cases of the following.
	\begin{proposition}\label{prop:degree_of_quotients}
		Let $\Gg$ be a connected groupoid, and hence complete of degree $d$, over a set of vertices $\Lambda$ of cardinality $n$. Let $\Nn$ be a normal subgroupoid. Let $\Gg\cong G\times \widehat{\Lambda}$ for $G = \Gg_\lambda$ the isotropy group at $\lambda \in \Nn^0$. Let $N = \Nn_\lambda$, and suppose that $\Nn$ has $m$ connected components. Then:
		\begin{enumerate}
			\item $\Nn$ is wide;
			\item all the connected components have same degree $|N|$ (but not necessarily same number of vertices);
			\item all the isotropy groups $\Nn_\mu$, for $\mu\in\Gg^0$,  are isomorphic;
			\item $\Gg/\Nn \cong (G/N)\times \widehat{m}$.
		\end{enumerate}
	\end{proposition}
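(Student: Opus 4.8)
The plan is to reduce everything to a concrete description of $\Nn$ inside the product model $\Gg\cong G\times\widehat{\Lambda}$ of Remark~\ref{rem:product_group_coarse_groupoid}. Statement \textit{(i)} needs no work: by Definition~\ref{def:normal_subgroupoid} a normal subgroupoid is wide by fiat, so $\Nn^0 = \Gg^0 = \Lambda$.

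\textbf{Structure of $\Nn$.} Working in $\Gg = G\times\widehat{\Lambda}$ and writing arrows as $g\times[\mu,\nu]$, each isotropy group $\Nn_\mu$ lies in $\Gg_\mu=\{g\times 1_\mu:g\in G\}$, so $\Nn_\mu = S_\mu\times 1_\mu$ for some subgroup $S_\mu\le G$. Feeding the arrows $h\times[\lambda,\mu]$ (for all $h\in G$) and their inverses into the normality condition $a\,\Nn_{\target(a)}\,a^{-1}\subseteq\Nn_{\source(a)}$ would give $S_\mu = S_\lambda = N$ for every $\mu$, and (with $\mu=\lambda$) that $N\triangleleft G$. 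Next, for $\mu\neq\nu$ in the same connected component of $\Nn$, fixing one arrow $a_0 = g_0\times[\mu,\nu]\in\Nn$ one gets $\Nn(\mu,\nu) = \Nn_\mu a_0 = \{g\times[\mu,\nu]: g\in Ng_0\}$, a single $N$-coset of cardinality $|N|$, whereas $\Nn(\mu,\nu)=\emptyset$ when $\mu,\nu$ lie in different components. Hence each connected component $C$ of $\Nn$ is a connected groupoid over $C$ with isotropy of order $|N|$ at every vertex, so it is complete of degree $|N|$ by Proposition~\ref{prop:product_groupd_coarse_groupoid}; this is \textit{(ii)}. (The number of vertices of a component is genuinely unconstrained, as already $\Gg=\widehat{3}$ with $\Nn = \widehat{\{1,2\}}\sqcup 1_{\{3\}}$ shows.) For \textit{(iii)}, the displayed structure gives $\Nn_\mu\cong N$ for every $\mu$; alternatively, conjugation by any $a\in\Gg(\mu,\nu)$ — such $a$ exists since $\Gg$ is connected — restricts, by normality, to a group isomorphism $\Nn_\nu\xrightarrow{\sim}\Nn_\mu$.

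\textbf{The quotient.} It remains to establish \textit{(iv)}. Let $C_1,\dots,C_m$ be the components of $\Nn$ and choose a vertex $\mu_i\in C_i$ for each $i$. The vertices of $\Gg/\Nn$ are the $\approx$-classes, i.e.\@ the $C_i$, so there are $m$ of them. For the arrows, I would first check that every $\sim$-class of $\Gg^1$ has a representative of the form $g\times[\mu_i,\mu_j]$: given $x\in\Gg(\mu,\nu)$ with $\mu\in C_i$, $\nu\in C_j$, compose $x$ on the left with an arrow of $\Nn$ from $\mu_i$ to $\mu$ and on the right with an arrow of $\Nn$ from $\nu$ to $\mu_j$. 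Then I would show $g\times[\mu_i,\mu_j]\sim g'\times[\mu_i,\mu_j]$ iff $gN = g'N$: one direction is left multiplication by an element of $N\times 1_{\mu_i}$; for the converse, any $n,n'\in\Nn$ realising $n\,(g\times[\mu_i,\mu_j])\,n' = g'\times[\mu_i,\mu_j]$ are forced to be loops at $\mu_i$ and at $\mu_j$, hence lie in $N\times 1_{\mu_i}$ and $N\times 1_{\mu_j}$, so $g'\in NgN = gN$ (here normality of $N$ in $G$ is used). Thus the arrows $C_i\to C_j$ in $\Gg/\Nn$ biject with $N\backslash G/N = G/N$ via $[g\times[\mu_i,\mu_j]]\mapsto gN$, for every ordered pair $(i,j)$. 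Finally, since consecutive $\sim$-classes admit composable representatives at the chosen vertices and then $[x]\cdot[y]=[xy]$ (take the auxiliary $\Nn$-arrows in the quotient multiplication to be local units, which belong to $\Nn$ as it is wide), these bijections assemble into a groupoid isomorphism $\Gg/\Nn\xrightarrow{\sim}(G/N)\times\widehat{m}$ sending $C_i$ to the $i$-th vertex and $[g\times[\mu_i,\mu_j]]$ to $gN\times[i,j]$, the composition $(gN\times[i,j])(hN\times[j,k]) = ghN\times[i,k]$ matching $[g\times[\mu_i,\mu_j]]\cdot[h\times[\mu_j,\mu_k]] = [gh\times[\mu_i,\mu_k]]$.

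The conceptual kernel of \textit{(iv)} is the trivial identity $N\backslash G/N = G/N$, valid because $N$ is normal; everything else is bookkeeping. The step I expect to cost the most care is the description of the $\sim$-classes of $\Gg/\Nn$ — in particular arguing that the connecting $\Nn$-arrows in a relation $n\,(g\times[\mu_i,\mu_j])\,n' = g'\times[\mu_i,\mu_j]$ are necessarily loops — together with checking that the resulting map on arrows is independent of the choices of the model $\Gg\cong G\times\widehat{\Lambda}$, of the base vertices $\mu_i$, and of the representatives.
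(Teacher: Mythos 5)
Your proof is correct and follows essentially the same route as the paper's: normality in the product model $G\times\widehat{\Lambda}$ forces all isotropy groups of $\Nn$ to be conjugate, hence equal to $N$, which gives (i)--(iii), and the quotient is then a connected groupoid on $m$ vertices with isotropy $G/N$. The paper is terser --- it invokes Proposition \ref{prop:product_groupd_coarse_groupoid} for (iv) rather than exhibiting the hom-set bijections $N\backslash G/N=G/N$ explicitly --- but your extra bookkeeping is sound, and your observation that (i) is already contained in Definition \ref{def:normal_subgroupoid} is accurate.
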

	\begin{proof} Since $\Gg$ is connected, any two vertices $\mu, \mu'$ in $\Gg^0$ are connected by an arrow $g\in \Gg$, and the conjugation by $g$ is an isomorphism between $\Nn_\mu$ and $\Nn_{\mu'}$ because $\Nn$ is normal. Therefore, the isotropy groups $\Nn_\mu$ are all isomorphic, and all are isomorphic to $N$. In particular, no isotropy group $\Nn_\mu$ is empty, thus $\Nn$ is wide.
		
		This also implies that every connected component of $\Nn$ has degree $|N|$. It does not imply, however, that every connected component has the same number of vertices (we shall indeed provide a counterexample in \cref{fig:vker_1}, with the groupoid $\tilde{\Gg}\cong \widehat{4}$ and its normal subgroupoid $\ker(\tilde{f})\cong \widehat{1}\sqcup \widehat{1}\sqcup \widehat{2}$).
		
		By definition of $\approx$, one obviously has $|(\Gg/\Nn)^0|=m$. The projection $\pi\colon \Gg\to \Gg/\Nn$ restricts to a projection $G\to (\Gg/\Nn)_{[\lambda]}$ where $[\lambda]$ is the $\approx$-equivalence class of $\lambda$; thus $(\Gg/\Nn)_{[\lambda]}\cong G/N$. Observe that $\Gg/\Nn$ is connected, because $\Gg$ is; and it has exactly $m$ vertices, because $\Nn$ is wide; hence $\Gg/\Nn \cong (G/N)\times \widehat{m}$ as in \cref{prop:product_groupd_coarse_groupoid}.
	\end{proof}
	\section{A lifted First Isomorphism Theorem}
	\subsection{Virtual kernels: two case studies} Consider the morphism $f\colon \Gg\to \Hh$ described in \cref{fig:case_study_1}, where $f^1$ and the groupoid structures are unambiguous because the quivers are Schurian. Here $f^0$ identifies $\mu$ and $\mu'$, thus we would like to describe the image $\im(f)$ as the quotient of $\Gg$ by the groupoid $\tilde{\Nn}$ on the left-hand side of \cref{fig:case_study_1}. However, the following problems occur:
	\begin{enumerate}
		\item $\tilde{\Nn}$ is not a subgroupoid of $\Gg$, because the arrows $\mu\leftrightarrows\mu'$ (the dashed arrows in the figure) do not belong to $\Gg$;
		\item the image $\im(f)$ is not a groupoid, while $\points{\im(f)}\cong \widehat{3}$ cannot be obtained as a quotient of $\Gg$ in any possible way;
		\item the kernel of $f$ is $\ker(f)=\One_{\Gg^0}$, not $\tilde{\Nn}$. Moreover, $\Gg/\ker(f) \cong \Gg$ is not isomorphic to $\points{\im(f)}$.
	\end{enumerate}
	We wonder what is the `smallest' groupoid $\tilde{\Gg}$ in which both $\Gg$ and $\tilde{\Nn}$ can be immersed. This is clearly the coarse groupoid $\widehat{\{\lambda, \lambda',\mu,\mu'\}}\cong \widehat{4}$. Observe that $f$ induces a unique morphism $\tilde{f}\colon \tilde{\Gg}\to \Hh$ which is now full and surjective on the vertices, whose kernel is $\tilde{\Nn}$, and for which $\Hh\cong \tilde{\Gg}/\tilde{\Nn}$ holds; see \cref{fig:vker_1}.
	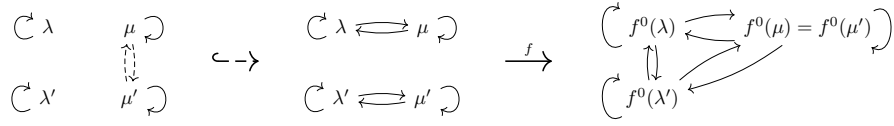
\begin{figure}[t]		
		\[\adjustbox{scale=0.75,center}{\begin{tikzcd}
				\lambda & \mu & {} & {} & \lambda & \mu & {} & {} & {f^0(\lambda)} & {f^0(\mu)=f^0(\mu')} \\
				{\lambda'} & {\mu'} &&& {\lambda'} & {\mu'} &&& {f^0(\lambda')}
				\arrow[from=1-1, to=1-1, loop, in=150, out=210, distance=5mm]
				\arrow[from=1-2, to=1-2, loop, in=330, out=30, distance=5mm]
				\arrow[bend left=10, dashed, from=1-2, to=2-2]
				\arrow[thick,shift right=7, dashed, hook, from=1-3, to=1-4]
				\arrow[from=1-5, to=1-5, loop, in=150, out=210, distance=5mm]
				\arrow[bend left=10, from=1-5, to=1-6]
				\arrow[bend left=10, from=1-6, to=1-5]
				\arrow[from=1-6, to=1-6, loop, in=330, out=30, distance=5mm]
				\arrow[thick,"f", shift right=7, from=1-7, to=1-8]
				\arrow[from=1-9, to=1-9, loop, in=150, out=210, distance=5mm]
				\arrow[bend left=10, from=1-9, to=1-10]
				\arrow[bend left=10, from=1-10, to=1-9]
				\arrow[shift left=7, from=1-10, to=1-10, loop, in=330, out=30, distance=5mm]
				\arrow[bend left=10, from=1-10, to=2-9]
				\arrow[from=2-1, to=2-1, loop, in=150, out=210, distance=5mm]
				\arrow[bend left=10, dashed, from=2-2, to=1-2]
				\arrow[from=2-2, to=2-2, loop, in=330, out=30, distance=5mm]
				\arrow[from=2-5, to=2-5, loop, in=150, out=210, distance=5mm]
				\arrow[bend left=10, from=2-5, to=2-6]
				\arrow[bend left=10, from=2-6, to=2-5]
				\arrow[from=2-6, to=2-6, loop, in=330, out=30, distance=5mm]
				\arrow[bend left=10, from=2-9, to=1-10]
				\arrow[from=2-9, to=2-9, loop, in=150, out=210, distance=5mm]
				\arrow[from=2-9, to=1-9, bend left=10]
				\arrow[from=1-9, to=2-9, bend left=10]
		\end{tikzcd}}\]
		\caption{The morphism $f\colon \Gg\to \Hh$ identifies $\mu$ and $\mu'$, thus we would like to take the groupoid on the left as its kernel; but the dashed arrows do not exist in $\Gg$.}\label{fig:case_study_1}
	\end{figure}
	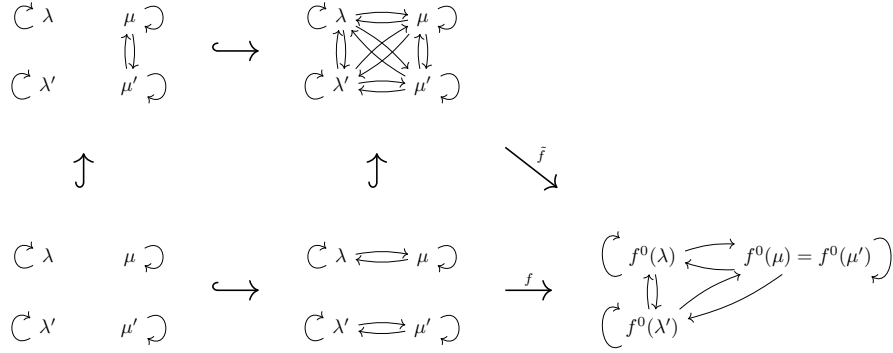
\begin{figure}[t]
		\[\adjustbox{scale=0.75,center}{\begin{tikzcd}
				\lambda & \mu & {} & {} & \lambda & \mu \\
				{\lambda'} & {\mu'} &&& {\lambda'} & {\mu'} \\
				{} &&&& {} && {} \\
				{} &&&& {} &&& {} \\
				\lambda & \mu & {} & {} & \lambda & \mu & {} & {} & {f^0(\lambda)} & {f^0(\mu)= f^0(\mu')} \\
				{\lambda'} & {\mu'} &&& {\lambda'} & {\mu'} &&& {f^0(\lambda')}
				\arrow[from=1-1, to=1-1, loop, in=150, out=210, distance=5mm]
				\arrow[from=1-2, to=1-2, loop, in=330, out=30, distance=5mm]
				\arrow[bend left=10, from=1-2, to=2-2]
				\arrow[thick,shift right=7, hook, from=1-3, to=1-4]
				\arrow[from=1-5, to=1-5, loop, in=150, out=210, distance=5mm]
				\arrow[bend left=10, from=1-5, to=1-6]
				\arrow[bend left=10, from=1-5, to=2-5]
				\arrow[bend left=10, from=1-5, to=2-6]
				\arrow[bend left=10, from=1-6, to=1-5]
				\arrow[from=1-6, to=1-6, loop, in=330, out=30, distance=5mm]
				\arrow[bend left=10, from=1-6, to=2-5]
				\arrow[bend left=10, from=1-6, to=2-6]
				\arrow[from=2-1, to=2-1, loop, in=150, out=210, distance=5mm]
				\arrow[bend left=10, from=2-2, to=1-2]
				\arrow[from=2-2, to=2-2, loop, in=330, out=30, distance=5mm]
				\arrow[bend left=10, from=2-5, to=1-5]
				\arrow[bend left=10, from=2-5, to=1-6]
				\arrow[from=2-5, to=2-5, loop, in=150, out=210, distance=5mm]
				\arrow[bend left=10, from=2-5, to=2-6]
				\arrow[bend left=10, from=2-6, to=1-5]
				\arrow[bend left=10, from=2-6, to=1-6]
				\arrow[bend left=10, from=2-6, to=2-5]
				\arrow[from=2-6, to=2-6, loop, in=330, out=30, distance=5mm]
				\arrow[thick,"{\tilde{f}}", from=3-7, to=4-8]
				\arrow[thick,shift right=7.5, hook, from=4-1, to=3-1]
				\arrow[thick,shift right=7.5, hook, from=4-5, to=3-5]
				\arrow[from=5-1, to=5-1, loop, in=150, out=210, distance=5mm]
				\arrow[from=5-2, to=5-2, loop, in=330, out=30, distance=5mm]
				\arrow[thick,shift right=7, hook, from=5-3, to=5-4]
				\arrow[from=5-5, to=5-5, loop, in=150, out=210, distance=5mm]
				\arrow[bend left=10, from=5-5, to=5-6]
				\arrow[bend left=10, from=5-6, to=5-5]
				\arrow[from=5-6, to=5-6, loop, in=330, out=30, distance=5mm]
				\arrow[thick,"f", shift right=7, from=5-7, to=5-8]
				\arrow[from=5-9, to=5-9, loop, in=150, out=210, distance=5mm]
				\arrow[bend left=10, from=5-9, to=5-10]
				\arrow[bend left=10, from=5-9, to=6-9]
				\arrow[bend left=10, from=5-10, to=5-9]
				\arrow[shift left=7, from=5-10, to=5-10, loop, in=330, out=30, distance=5mm]
				\arrow[bend left=10, from=5-10, to=6-9]
				\arrow[from=6-1, to=6-1, loop, in=150, out=210, distance=5mm]
				\arrow[from=6-2, to=6-2, loop, in=330, out=30, distance=5mm]
				\arrow[from=6-5, to=6-5, loop, in=150, out=210, distance=5mm]
				\arrow[bend left=10, from=6-5, to=6-6]
				\arrow[bend left=10, from=6-6, to=6-5]
				\arrow[from=6-6, to=6-6, loop, in=330, out=30, distance=5mm]
				\arrow[bend left=10, from=6-9, to=5-9]
				\arrow[bend left=10, from=6-9, to=5-10]
				\arrow[from=6-9, to=6-9, loop, in=150, out=210, distance=5mm]
		\end{tikzcd}}\]
		\caption{The map $f$ from \cref{fig:case_study_1}, with $\ker(f)$, the groupoids $\tilde{\Gg}$ and $\tilde{\Nn}$, the induced morphism $\tilde{f}$, and the various inclusions.}\label{fig:vker_1}
	\end{figure}
	
	We now consider another example. Let $f\colon \Gg\to \Hh$ be as in \cref{fig:image_not_subgroupoid_conn}, with $\Gg=\widehat{2}$ and $\Hh = \Z/4\Z$. Clearly $\ker(f)= \Gg^\circlearrowright$, but $\Gg/\Gg^\circlearrowright$ is isomprhic to the trivial group $1$, not to $\Z/4\Z$. The `smallest' groupoid $\tilde{\Gg}$ such that $\Gg$ embeds in $\tilde{\Gg}$ and $\Hh$ is a quotient of $\tilde{\Gg}$, is the groupoid isomorphic to $\Z/4\Z\times \widehat{2}$. Then $\Hh$ is isomorphic to $\tilde{\Gg}/\tilde{\Nn}$, where $\tilde{\Nn}$ is isomorphic to $\Gg$, and $\ker(f)$ again embeds in $\tilde{\Nn}$; see \cref{fig:vker_2}.
	\begin{figure}[t]
		\[\adjustbox{scale=0.9,center}{\begin{tikzcd}
				\bullet & \bullet & {} & {} & \bullet & \bullet & & & \\
				{} & {} & {} & {} & {} & {} & {} & {} &{}\\
				{} & {} & {} & {} & {}& {} & {} & {} &{} \\
				\bullet & \bullet & {} & {} & \bullet & \bullet & {} & {} & \bullet
				\arrow[from=1-1, to=1-1, loop, in=150, out=210, distance=5mm]
				\arrow[bend left=10, from=1-1, to=1-2]
				\arrow[bend left=10, from=1-2, to=1-1]
				\arrow[from=1-2, to=1-2, loop, in=330, out=30, distance=5mm]
				\arrow[thick,hook, from=1-3, to=1-4]
				\arrow[from=1-5, to=1-5, loop, in=150, out=210, distance=5mm]
				\arrow[from=1-5, to=1-5, loop, in=145, out=215, distance=10mm]
				\arrow[from=1-5, to=1-5, loop, in=140, out=220, distance=15mm]
				\arrow[from=1-5, to=1-5, loop, in=135, out=225, distance=20mm]
				\arrow[bend left=10, from=1-5, to=1-6]
				\arrow[bend left=20, from=1-5, to=1-6]
				\arrow[bend left=30, from=1-5, to=1-6]
				\arrow[bend left=40, from=1-5, to=1-6]
				\arrow[bend left=10, from=1-6, to=1-5]
				\arrow[bend left=20, from=1-6, to=1-5]
				\arrow[bend left=30, from=1-6, to=1-5]
				\arrow[bend left=40, from=1-6, to=1-5]
				\arrow[from=1-6, to=1-6, loop, in=330, out=30, distance=5mm]
				\arrow[from=1-6, to=1-6, loop, in=325, out=35, distance=10mm]
				\arrow[from=1-6, to=1-6, loop, in=320, out=40, distance=15mm]
				\arrow[from=1-6, to=1-6, loop, in=315, out=45, distance=20mm]
				\arrow[thick,shift right=7.5, hook, from=3-1, to=2-1]
				\arrow[thick,shift right=7.5, hook, from=3-5, to=2-5]
				\arrow[from=4-1, to=4-1, loop, in=150, out=210, distance=5mm]
				\arrow[from=4-2, to=4-2, loop, in=330, out=30, distance=5mm]
				\arrow[thick,hook,from=4-3, to=4-4]
				\arrow[from=4-5, to=4-5, loop, in=150, out=210, distance=5mm]
				\arrow["a"{description}, bend left=10, from=4-5, to=4-6]
				\arrow[bend left=10, from=4-6, to=4-5]
				\arrow[from=4-6, to=4-6, loop, in=330, out=30, distance=5mm]
				\arrow[thick,"f", from=4-7, to=4-8]
				\arrow["0"{description}, from=4-9, to=4-9, loop, in=60, out=120, distance=5mm]
				\arrow["1"{description}, from=4-9, to=4-9, loop, in=55, out=125, distance=10mm]
				\arrow["2"{description}, from=4-9, to=4-9, loop, in=50, out=130, distance=15mm]
				\arrow["3"{description}, from=4-9, to=4-9, loop, in=45, out=135, distance=22mm]
				\arrow[thick, "\tilde{f}", from=2-7, to=3-8]
		\end{tikzcd}}\]
		\caption{The morphism $f$ from \cref{fig:image_not_subgroupoid_conn}, sending $a$ to $1\in \Z/4\Z$; the groupoids $\tilde{\Nn}$ and $\tilde{\Gg}$, and the induced morphism $\tilde{f}\colon \tilde{\Gg}\to \Hh$. }\label{fig:vker_2}
	\end{figure}
	\subsection{Categories of short exact sequences} If $\Cc$ is a category where the notion of short exact sequence makes sense (e.g.\@ $\Cc= \Gpd$ or $\Cc = \Gpd_\Lambda$), we can consider the classical \textit{category of short exact sequences}, here denoted by $\SES(\Cc)$, having short exact sequences $A\to B\to C$ as objects, and morphisms from $A\to B\to C$ to $A'\to B'\to C'$ given by triples of morphisms $A\to A'$, $B\to B'$, $C\to C'$ in $\Cc$, making the obvious squares commute. 
	
	For an object $X$ in $\Cc$, we let $\SES_X(\Cc)$ be the subcategory of $\SES(\Cc)$ having $X$ as the last term, where a morphism from $A\to B\to X$ to $A'\to B'\to X$ is a morphism in $\SES(\Cc)$ whose third term is the identity $\id_X$. 
	
	We use the notations $\SES^{\text{\textup{split}}}(\Cc)$, $\SES^{\text{\textup{split}}}_X(\Cc)$ for the subcategories of $\SES(\Cc)$, resp.\@ $\SES_X(\Cc)$, consisting of split sequences, and morphisms that intertwine the two splitting maps. We use the notations \[ \SES^{\text{\textup{\textsc{fit}}}}(\Cc),\quad \SES^{\text{\textup{split,\textsc{fit}}}}(\Cc),\quad \SES^{\text{\textup{\textsc{fit}}}}_X(\Cc),\quad \SES^{\text{\textup{split,\textsc{fit}}}}_X(\Cc)\] for the full subcategories of $\SES(\Cc)$, $\SES^{\text{\textup{split}}}(\Cc)$, $\SES_X(\Cc)$, and $\SES^{\text{\textup{split}}}_X(\Cc)$ respectively, consisting of \textsc{fit} sequences.
	\begin{remark}
		If the sequence $A\to B \overset{f}{\to} X$ is exact, then it is isomorphic to $\ker(f)\to B\to X$. Thus the category $\SES_X(\Cc)$ is equivalent to the \textit{arrow category} on $X$, having as objects the morphisms $f\colon B\to X$. The category $\SES_X^{\mathrm{split}}(\Cc)$, in turn, is equivalent to a subcategory of the \textit{category of points}, consisting of split epimorphisms in $\Cc$ having target $X$. 
		
		The reader who is more familiar with this setting, may reinterpret the subsequent sections in the language of arrow and point categories.
	\end{remark}
	
	\subsection{A lifted First Isomorphism Theorem in $\maybebm{\Gpd}$}\label{subsec:liftedFITGpd} Starting from a short exact sequence of groupoids, we shall now construct a split \textsc{fit} exact sequence, which satisfies a universal property very close to the one of a free object. 
	
	Before doing this, we need a preliminary observation in group theory. Recall that the free product of groups (see \cite{SchreierFreieGruppen}) is the coproduct in the category of groups (see e.g.\@ \cite[\S III.3]{categories-for-the-work}).
	
	\begin{remark}\label{lem:univ_groups}
		Let $\{G_i\}_{i\in J}$ be a family of groups, indexed by a set $J$. Let $\tilde{G}$ be the free product of this family, with canonical monomorphisms $\iota_i\colon G_i\to \tilde{G}$. Let $H$ be a group, and $\varphi_i\colon G_i\to H$ be a family of homomorphisms, thus inducing a homomorphism $\varphi\colon \tilde{G}\to H$ by the universal property of the coproduct.
		
		Then for every group $R$ and homomorphisms $\xi_i\colon G_i\to R$, and for every homomorphism $r\colon R\to H$ such that $r \xi_i = \varphi_i$ for all $i$, there exists a unique homomorphism $\xi\colon \tilde{G}\to R$ such that $\xi \iota_i =\xi_i$ for all $i$, and such that $r\xi = \varphi$.
		
		In other words, $\varphi\colon \tilde{G}\to H$ is the coproduct of the maps $\varphi_i\colon G_i\to H$ in the slice category $\Gp/H$, where $\Gp$ is the category of groups. This follows from the proof of \cite[Proposition 3.5.5]{riehl-context}.
	\end{remark}
	
	We now construct, for every short exact sequence $\mathbf{S}$ (or equivalently epimorphism) in $\Gpd$, a universal split \textsc{fit} sequence $\tilde{\mathbf{S}}$ as promised. To do so, we heavily use the decomposition $\Gg = G\times \widehat{\Lambda}$ from \cref{prop:map_product_group_coarse}. This decomposition simplifies the entire discourse, but care needs to be taken in decomposing the different groupoids involved, in a way that is compatible with the morphisms between them: most of the details, in our proofs, will be devoted to this technicality.
	
	Let $f\colon \Gg\to \Hh$ be an epimorphism of groupoids. As usual, without loss of generality, let $\Hh$ be connected, and let $\Gg = \bigsqcup_{i\in I} \Gg_i$ where each $\Gg_i$ is a connected component, and $f$ restricts to morphisms (not necessarily epic) $f_i = f|_{\Gg_i}$.
	
	Chosen a vertex $\mu$ in $\Mu=\Hh^0$ and a maximal coarse subgroupoid $\widehat{\Mu}$ of $\Hh$, one gets $\Hh\cong H\times \widehat{\Mu}$ for a group $H$. Every $\Gg_i$ will be isomorphic to $G_i\times \widehat{\Lambda_i}$ for suitable groups $G_i$ and sets $\Lambda_i = \Gg_i^0$, once a family of vertices $\lambda_i\in\Lambda_i$ has been chosen. However, observe that one may \textit{not} be able to choose $\mu$ and $\{\lambda_i\}_{i\in I}$ such that $f_i^0(\lambda_i)=\mu$ for all $i$, because $f^0$ is surjective but the single $f_i^0$'s need not be; see \cref{fig:no_coherent_family}.
	\begin{figure}[t]
		\[\begin{tikzcd}
			{a_1} & {b_1} \\
			{a_2} & {b_2} & {} & {} & {a_1} & {f^0(b_1)=f^0(b_2)} \\
			{a_3} & {b_3} &&& {f^0(a_2)=f^0(a_3)} & {b_3}
			\arrow[from=1-1, to=1-1, loop, in=150, out=210, distance=5mm]
			\arrow[bend left=10, from=1-1, to=1-2]
			\arrow[bend left=10, from=1-2, to=1-1]
			\arrow[from=1-2, to=1-2, loop, in=330, out=30, distance=5mm]
			\arrow[from=2-1, to=2-1, loop, in=150, out=210, distance=5mm]
			\arrow[bend left=10, from=2-1, to=2-2]
			\arrow[bend left=10, from=2-2, to=2-1]
			\arrow[from=2-2, to=2-2, loop, in=330, out=30, distance=5mm]
			\arrow[thick, "f",shift right=5, from=2-3, to=2-4]
			\arrow[from=2-5, to=2-5, loop, in=150, out=210, distance=5mm]
			\arrow[bend left=10, from=2-5, to=2-6]
			\arrow[bend left=10, from=2-5, to=3-5]
			\arrow[bend left=10, from=2-5, to=3-6]
			\arrow[bend left=10, from=2-6, to=2-5]
			\arrow[shift left=5, from=2-6, to=2-6, loop, in=330, out=30, distance=5mm]
			\arrow[bend left=10, from=2-6, to=3-5]
			\arrow[bend left=10, from=2-6, to=3-6]
			\arrow[from=3-1, to=3-1, loop, in=150, out=210, distance=5mm]
			\arrow[bend left=10, from=3-1, to=3-2]
			\arrow[bend left=10, from=3-2, to=3-1]
			\arrow[from=3-2, to=3-2, loop, in=330, out=30, distance=5mm]
			\arrow[bend left=10, from=3-5, to=2-5]
			\arrow[bend left=10, from=3-5, to=2-6]
			\arrow[shift left=5, from=3-5, to=3-5, loop, in=150, out=210, distance=5mm]
			\arrow[bend left=10, from=3-5, to=3-6]
			\arrow[bend left=10, from=3-6, to=2-5]
			\arrow[bend left=10, from=3-6, to=2-6]
			\arrow[bend left=10, from=3-6, to=3-5]
			\arrow[from=3-6, to=3-6, loop, in=330, out=30, distance=5mm]
		\end{tikzcd}\]
		\caption{This morphism $f\colon \Gg\to \Hh$, identifying $b_1$ with $b_2$ and $a_2$ with $a_3$, does not admit a choice of $\mu\in \Hh^0$ that lies in the image of every map $f^0_i$.}\label{fig:no_coherent_family}
	\end{figure}
	
	This means that, in general, the best that one can do is choosing a family of vertices $\{\lambda_i\}_{i\in I}$, and then a family $\mu_i = f_i^0(\lambda_i)$, so that each $f^1_i$ induces a group homomorphism $f_i^\Gp\colon G_i=(\Gg_i)_{\lambda_i}\to \Hh_{\mu_i}$; and then we may compose $f_i^\Gp$ with the isomorphism $C_{[\mu,\mu_i]}\colon  \Hh_{\mu_i}\to \Hh_\mu =H$ given by the conjugation by $[\mu,\mu_i]$ in $\Hh$, thus obtaining homomorphisms $\varphi_i\colon G_i\to H$. These homomorphisms $\varphi_i$ are neither monic nor epic in general.
	
	\begin{remark}\label{rem:disagreeing_Schurian}
		Observe that, once we choose wide Schurian subgroupoids of $\Hh$ and of the $\Gg_i$'s, it is not granted that $f$ will respect our choices; nor that there exists any choice of Schurian subgroupoids that will be respected by $f$ (see the counterexample in \cref{fig:disagreeing_Schurian}). However, this is not a problem for the rest of the construction.
	\end{remark}
	
	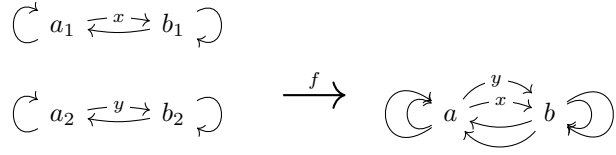
\begin{figure}[t]
		\[\begin{tikzcd}
			{a_1} & {b_1} \\
			{a_2} & {b_2} & {} & {} & a & b
			\arrow[from=1-1, to=1-1, loop, in=150, out=210, distance=5mm]
			\arrow["x"{description}, bend left=10, from=1-1, to=1-2]
			\arrow[bend left=10, from=1-2, to=1-1]
			\arrow[from=1-2, to=1-2, loop, in=330, out=30, distance=5mm]
			\arrow[from=2-1, to=2-1, loop, in=150, out=210, distance=5mm]
			\arrow["y"{description}, bend left=10, from=2-1, to=2-2]
			\arrow[bend left=10, from=2-2, to=2-1]
			\arrow[from=2-2, to=2-2, loop, in=330, out=30, distance=5mm]
			\arrow["f", thick, from=2-3, to=2-4, shift left=3]
			\arrow[from=2-5, to=2-5, loop, in=150, out=210, distance=5mm]
			\arrow[from=2-5, to=2-5, loop, in=145, out=215, distance=10mm]
			\arrow["x"{description}, bend left=20, from=2-5, to=2-6]
			\arrow["y"{description}, bend left = 50, from=2-5, to=2-6]
			\arrow[bend left=20, from=2-6, to=2-5]
			\arrow[bend left = 50, from=2-6, to=2-5]
			\arrow[from=2-6, to=2-6, loop, in=325, out=35, distance=10mm]
			\arrow[from=2-6, to=2-6, loop, in=330, out=30, distance=5mm]
		\end{tikzcd}\]
		\caption{An example of $f\colon \Gg\to \Hh$, with $f^0(a_i)=a$, $f^0(b_i)=b$, in which there is no good choice of the maximal Schurian subgroupoids that will be respected by $f$. Indeed, every $\Gg_i$ is already Schurian, thus the choice is forced for them; and no choice on $\Hh$ agrees. This is no problem for the construction in \cref{thm:construction_Gtilde}.}\label{fig:disagreeing_Schurian}
	\end{figure}
	
	Assume that the set of indices $I$ does not contain $0$, and define $G_0=H$ and $\varphi_0 =\id\colon H\to H$. We let $\tilde{G}$ be the group
	\[ \tilde{G}= \bigast_{i\in I\cup\{0\}} G_i,  \]
	where $*$ denotes the free product of groups.
	
	We denote the canonical injections $G_i\to \tilde{G}$ by $\iota_i^\Gp$. 
	Let $\Lambda =\bigsqcup_{i\in I}\Lambda_i$. We define $\tilde{\Gg} = \tilde{G}\times \widehat{\Lambda}$, and define $\tilde{f}\colon \tilde{\Gg}\to \Hh$ as the map induced by $(\tilde{f}^\Gp, \tilde{f}^\Set)$, where $\tilde{f}^\Set = f^0$, and $\tilde{f}^\Gp$ is induced by the family $\{\varphi_i\}_{i\in I\cup\{0\}}$ via the universal property of $\tilde{G}$.
	\begin{lemma}\label{lem:morphi_SES}Let $f\colon \Gg\to \Hh$ and $\tilde{f}\colon \tilde{\Gg}\to \Hh$ be as above. Then there are morphisms $\Gg\to \tilde{\Gg}$ and $\Nn\to \ker(\tilde{f})$, such that they induce a morphism in $\SES_\Hh(\Gpd)$
		\[\begin{tikzcd}
			{\ker(\tilde{f})} & {\tilde{\Gg}} & \Hh \\
			\Nn & \Gg & \Hh
			\arrow[hook, from=1-1, to=1-2]
			\arrow["{\tilde{f}}", from=1-2, to=1-3]
			\arrow[from=2-1, to=1-1]
			\arrow[hook, from=2-1, to=2-2]
			\arrow[from=2-2, to=1-2]
			\arrow["f", from=2-2, to=2-3]
			\arrow[equals, from=2-3, to=1-3]
		\end{tikzcd}\]
	\end{lemma}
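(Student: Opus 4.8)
The statement comes down to producing a functor $\psi\colon\Gg\to\tilde\Gg$ with $\tilde f\circ\psi=f$: once this is done, the map $\Nn\to\ker(\tilde f)$ can only be the corestriction of $\psi$ (from $\Nn=\ker(f)$ and $\tilde f\psi=f$ one gets $\psi(\Nn)\subseteq\ker(\tilde f)$), and both squares of the diagram commute automatically---the left one because its two legs are the restriction of $\psi$ to $\Nn$, the right one because it is $\tilde f\psi=f$ with $\id_\Hh$ in the last coordinate. So all the work is in $\psi$, and we may as well make it a monomorphism. The feature of the data that drives the construction is that $\tilde f^\Gp\colon\tilde G\to H$ is a \emph{split} epimorphism of groups, with section $\iota_0^\Gp$, because $\varphi_0=\id_H$; more generally $\tilde f^\Gp\circ\iota_i^\Gp=\varphi_i$ for all $i$. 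This is precisely why the extra factor $G_0=H$ was adjoined to the free product: the splitting lets us realise arbitrary elements of $H$ inside $\tilde G$.

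To build $\psi$, set $\psi^0:=\id_\Lambda$, which is compatible with $\tilde f^0=f^0$. On arrows I treat one connected component $\Gg_i\cong G_i\times\widehat{\Lambda_i}$ at a time; write each arrow $a\colon\alpha\to\beta$ of $\Gg_i$ uniquely as $a=[\alpha,\lambda_i]\,w_a\,[\lambda_i,\beta]$ with $w_a\in G_i$ (brackets in the chosen Schurian subgroupoid of $\Gg_i$). The naive assignment $a\mapsto\iota_i^\Gp(w_a)\times[\alpha,\beta]$ is already an injective functor, but in general it fails $\tilde f\psi=f$, precisely because $f$ need not carry the Schurian subgroupoid of $\Gg_i$ into the Schurian subgroupoid $\widehat{\Mu}$ of $\Hh$ (Remark \ref{rem:disagreeing_Schurian}, Figure \ref{fig:disagreeing_Schurian}). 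I repair it with a correction loop at each vertex: for $\gamma\in\Lambda_i$ put
\[ u_\gamma:=\iota_0^\Gp\bigl([\mu,f^0(\gamma)]\cdot f^1([\gamma,\lambda_i])\cdot[\mu_i,\mu]\bigr)\times 1_\gamma, \]
a loop at $\gamma$ in $\tilde\Gg$ whose ``$H$-content'' is exactly the discrepancy between $f^1([\gamma,\lambda_i])$ and the corresponding arrow of $\widehat{\Mu}$ (so that $u_{\lambda_i}=1_{\lambda_i}$), and set
\[ \psi^1(a):=u_\alpha\cdot\bigl(\iota_i^\Gp(w_a)\times[\alpha,\beta]\bigr)\cdot u_\beta^{-1}. \]

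That $\psi$ is a functor is immediate: sources and targets are preserved because the $u_\gamma$ are loops; multiplicativity holds because the naive assignment is multiplicative and the factors $u_\beta^{-1}u_\beta$ telescope; and $\psi^1(1_\gamma)=u_\gamma 1_\gamma u_\gamma^{-1}=1_\gamma$. It is a monomorphism because $\psi^0$ is injective and $\psi^1$ is injective on each $\Gg_i$ (as $\iota_i^\Gp$ is), while the $\widehat\Lambda$-coordinate keeps the components apart. The equality $\tilde f\psi=f$ holds on vertices since $\tilde f^0\circ\psi^0=f^0\circ\id_\Lambda=f^0$, and on arrows by a short chase: $\tilde f^1(g\times[\gamma,\delta])=[f^0(\gamma),\mu]\cdot\tilde f^\Gp(g)\cdot[\mu,f^0(\delta)]$ (this is how $\tilde f$ was produced from $(\tilde f^\Gp,\tilde f^\Set)$ via Proposition \ref{prop:map_product_group_coarse}), and plugging in $\tilde f^\Gp\iota_0^\Gp=\id_H$, $\tilde f^\Gp\iota_i^\Gp=\varphi_i$, and $\varphi_i(w_a)=[\mu,\mu_i]\,f^1(w_a)\,[\mu_i,\mu]$, every coarse bracket cancels against its neighbour and one is left with $\tilde f^1(\psi^1(a))=f^1([\alpha,\lambda_i])\,f^1(w_a)\,f^1([\lambda_i,\beta])=f^1(a)$. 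Together with $\id_\Hh$ and the corestriction $\psi|_{\Nn}\colon\Nn\to\ker(\tilde f)$, this is the required morphism in $\SES_\Hh(\Gpd)$.

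No single step is hard; the only thing requiring care is the bookkeeping of the product decompositions---that one fixes the families $\{\lambda_i\}$ and $\{\mu_i=f_i^0(\lambda_i)\}$ once and for all and cannot in general arrange $\mu_i$ to be independent of $i$ (Figure \ref{fig:no_coherent_family}), and that the diagrammatic-order conventions of the conjugations $C_{[\mu,\mu_i]}$ and of the identification $\Hh\cong H\times\widehat{\Mu}$ are kept consistent throughout. Conceptually, $\psi$ is nothing but a lift, through the split epimorphism $\tilde f^\Gp$, of the composite of $f$ with the canonical retraction of $\Hh$ onto its isotropy group $H=\Hh_\mu$, refined component-by-component so as to remain injective.
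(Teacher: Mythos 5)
Your proposal is correct and follows the same overall strategy as the paper's proof: build the morphism $\Gg\to\tilde{\Gg}$ component by component out of the canonical injections $\iota_i^\Gp\colon G_i\to\tilde{G}$ and the inclusions $\Lambda_i\hookrightarrow\Lambda$, then obtain $\Nn\to\ker(\tilde{f})$ by restriction. The difference is that the paper's proof stops at the assertion that the family $\{\iota_i\}$ ``assembles to a monomorphism $\iota\colon\Gg\to\tilde{\Gg}$'', whereas you correctly observe that the naive assignment $a\mapsto\iota_i^\Gp(w_a)\times[\alpha,\beta]$ need not satisfy $\tilde{f}\circ\iota=f$: since $f$ may fail to carry the chosen maximal Schurian subgroupoid of $\Gg_i$ into the chosen $\widehat{\Mu}\subseteq\Hh$ (this is exactly Remark \ref{rem:disagreeing_Schurian}), the two sides differ by the loops $f^1([\gamma,\lambda_i])\cdot[\mu_i,f^0(\gamma)]$. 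Your correction loops $u_\gamma$, which live in the extra free factor $G_0=H$ adjoined precisely for this purpose, cancel this discrepancy; I checked the chase $\tilde{f}^1(\psi^1(a))=f^1([\alpha,\lambda_i])f^1(w_a)f^1([\lambda_i,\beta])=f^1(a)$ and it is right, as are multiplicativity (telescoping of $u_\beta^{-1}u_\beta$), injectivity, and the fact that $u_{\lambda_i}=1_{\lambda_i}$ so that $\psi^\Gp=\iota_i^\Gp$ on each $G_i$. So your argument is not just a rewriting of the paper's: it supplies a step the paper leaves implicit (and which is genuinely needed, as the example of Figure \ref{fig:disagreeing_Schurian} shows, where the naive $\iota_2$ sends $[a_2,b_2]$ to an arrow mapping to $x$ rather than to $f^1([a_2,b_2])=y$). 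The only cosmetic point is that $u_\gamma$ depends on the component index $i$ through $\lambda_i$ and $\mu_i$, but since each $\gamma$ lies in exactly one $\Lambda_i$ the notation is unambiguous.
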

	\begin{proof}
		Consider the morphisms $\iota_i\colon \Gg_i\to \tilde{\Gg}$ given by $\iota_i^\Gp$ defined as above, and by $\iota_i^\Set\colon \Lambda_i\to \Lambda$ defined as the obvious inclusions. Then the family $\{\iota_i\}_{i\in I}$ assembles to a monomorphism $\iota\colon \Gg\to \tilde{\Gg}$. Observe that this morphism induces a monomorphism $\ker(f)\to \ker(\tilde{f})$, by restriction.
	\end{proof}
	\begin{theorem}\label{thm:construction_Gtilde}
		Let $f\colon \Gg\to \Hh$ and $\tilde{f}\colon \tilde{\Gg}\to \Hh$ be as above, inducing a morphism of short exact sequences as in \cref{lem:morphi_SES}. Then:
		\begin{enumerate}
			\item the morphism $\tilde{f}\colon \tilde{\Gg}\to \Hh$ is a split epimorphism, for some splitting $\tilde{s}\colon \Hh\to \tilde{\Gg}$;
			\item the First Isomorphism Theorem holds for $\tilde{f}$;
			\item the operation of sending a short exact sequence \[\mathbf{S} =\Big( \begin{tikzcd}
				{\Nn} & {\Gg} & \Hh
				\arrow[from=1-1, to=1-2]
				\arrow["{f}"',  from=1-2, to=1-3]
			\end{tikzcd} \Big)\] into the sequence \[\tilde{\mathbf{S}} =\Big( \begin{tikzcd}
				{\ker(\tilde{f})} & {\tilde{\Gg}} & \Hh
				\arrow[from=1-1, to=1-2]
				\arrow["{\tilde{f}}"', shift right, from=1-2, to=1-3]
				\arrow["{\tilde{s}}"', shift right, from=1-3, to=1-2]
			\end{tikzcd} \Big)\] yields a functor $\SES_\Hh(\Gpd)\to \SES_\Hh^{\mathrm{split},\text{\textup{\textsc{fit}}}}(\Gpd)$. Moreover, $\tilde{s} f$ is the canonical inclusion $\Gg\to\tilde{\Gg}$.
		\end{enumerate}
	\end{theorem}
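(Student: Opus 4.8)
My plan is to prove the three parts in turn, expecting essentially all of the work to fall on part \textit{(iii)}. For part \textit{(i)}: since $\varphi_0=\id_H$, the universal property of the free product already gives $\tilde{f}^{\Gp}\circ\iota_0^{\Gp}=\id_H$, where $\iota_0^{\Gp}\colon H=G_0\to\tilde{G}$ is the canonical injection of the extra factor. Because $f$ is an epimorphism onto the connected groupoid $\Hh$, its vertex map $f^0$ is surjective onto $\Mu$; I would fix a set-theoretic section $\beta\colon\Mu\to\Lambda$ of $f^0$ and take $\tilde{s}\colon\Hh\to\tilde{\Gg}$ to be the morphism induced, via Proposition \ref{prop:map_product_group_coarse} and the decompositions $\Hh\cong H\times\widehat{\Mu}$, $\tilde{\Gg}=\tilde{G}\times\widehat{\Lambda}$, by the pair $(\iota_0^{\Gp},\beta)$. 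Evaluating $\tilde{f}\tilde{s}$ on an arrow $h\times[\nu,\nu']$ gives $(\tilde{f}^{\Gp}\iota_0^{\Gp})(h)\times[f^0\beta\nu,f^0\beta\nu']=h\times[\nu,\nu']$, so $\tilde{f}\tilde{s}=\id_\Hh$ and $\tilde{f}$ is a split epimorphism.

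For part \textit{(ii)}, I would first identify $\ker(\tilde{f})$: an arrow $g\times[\eta,\eta']$ of $\tilde{\Gg}$ lies in it iff $\tilde{f}^{\Gp}(g)$ is trivial and $f^0(\eta)=f^0(\eta')$, so the isotropy group of $\ker(\tilde{f})$ at each vertex is $\ker(\tilde{f}^{\Gp})$ and its connected components are precisely the fibres of $f^0\colon\Lambda\to\Mu$, hence there are $m=|\Mu|$ of them. Since $\tilde{\Gg}$ is connected and $\ker(\tilde{f})$ is wide, Proposition \ref{prop:degree_of_quotients} yields $\tilde{\Gg}/\ker(\tilde{f})\cong(\tilde{G}/\ker(\tilde{f}^{\Gp}))\times\widehat{m}$; as $\tilde{f}^{\Gp}$ is surjective (split by $\iota_0^{\Gp}$), the first isomorphism theorem for groups gives $\tilde{G}/\ker(\tilde{f}^{\Gp})\cong H$, so $\tilde{\Gg}/\ker(\tilde{f})\cong H\times\widehat{\Mu}\cong\Hh$. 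To see that the morphism $\bar{f}\colon\tilde{\Gg}/\ker(\tilde{f})\to\Hh$ \emph{induced by} $\tilde{f}$ is this isomorphism, I would check that $\bar{f}$ is surjective on arrows and on vertices (because $\tilde{f}$ is), that $\bar{f}^0$ is injective (because $\approx$ is exactly the fibre equivalence of $f^0$), and that $\bar{f}$ is injective on arrows (because $\bar{f}^0$ is injective and $\ker(\bar{f})$ consists only of units); a morphism of groupoids bijective on arrows and on vertices is an isomorphism. This shows $\tilde{\mathbf{S}}$ is a \textsc{fit} sequence, and together with part \textit{(i)} a split \textsc{fit} one.

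For part \textit{(iii)}: a morphism $\mathbf{S}_1\to\mathbf{S}_2$ in $\SES_\Hh(\Gpd)$ is a functor $\phi\colon\Gg_1\to\Gg_2$ with $f_2\phi=f_1$; it takes each component $\Gg_{1,i}$ into some $\Gg_{2,\sigma(i)}$ and, after conjugating by the arrow of the fixed Schurian subgroupoid of $\Gg_{2,\sigma(i)}$ aligning $\phi^0(\lambda_{1,i})$ with $\lambda_{2,\sigma(i)}$, induces a homomorphism $\phi_i^{\Gp}\colon G_{1,i}\to G_{2,\sigma(i)}$. The naive assignment $g\mapsto\iota_{\sigma(i)}^{\Gp}(\phi_i^{\Gp}(g))$ fails to make $\tilde{f}_2$ compatible with $\tilde{f}_1$, because $f_2$ need not respect the chosen Schurian subgroupoids (Remark \ref{rem:disagreeing_Schurian}) and the two homomorphisms $G_{1,i}\to H$ one may build differ by an inner automorphism, say conjugation by $h_i\in H$. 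Exploiting that $\tilde{f}_2^{\Gp}$ is split surjective, I would pick $w_i\in\tilde{G}_2$ with $\tilde{f}_2^{\Gp}(w_i)=h_i$ (for instance $w_i=\iota_0^{\Gp}(h_i)$), define $\tilde{\phi}^{\Gp}$ to be $g\mapsto w_i^{-1}\iota_{\sigma(i)}^{\Gp}(\phi_i^{\Gp}(g))w_i$ on the factor $G_{1,i}$ and $\iota_0^{\Gp}$ on the factor $G_0=H$, assemble these by the universal property of the free product, set $\tilde{\phi}^{\Set}=\phi^0$, and let $\tilde{\phi}$ be induced by $(\tilde{\phi}^{\Gp},\tilde{\phi}^{\Set})$. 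The correction by $w_i$ is chosen exactly so that $\tilde{f}_2^{\Gp}\circ\tilde{\phi}^{\Gp}$ and $\tilde{f}_1^{\Gp}$ agree on every free factor, whence $\tilde{f}_2\circ\tilde{\phi}=\tilde{f}_1$; I would then verify that $\tilde{\phi}$ restricts to $\ker(\tilde{f}_1)\to\ker(\tilde{f}_2)$, that the squares involving the canonical inclusions $\Gg_k\hookrightarrow\tilde{\Gg}_k$ and the splittings commute, and that $\phi\mapsto\tilde{\phi}$ preserves identities and composites — the last two reducing, via the universal property of free products, to the analogous facts for the blocks $\phi_i^{\Gp}$ and the vertex map $\phi^0$. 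Finally one recovers the identity $\tilde{f}\circ\iota=f$ of Lemma \ref{lem:morphi_SES}, and checks that $\tilde{s}\circ f$ is the canonical inclusion $\Gg\to\tilde{\Gg}$ once $\beta$ is chosen compatibly with the decomposition of $\Gg$.

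The hard part will be exactly this coherent bookkeeping of the corrections $w_i$ in part \textit{(iii)}: the decomposition $\Gg\cong G\times\widehat{\Lambda}$, the homomorphisms $\varphi_i$, and the induced maps $\phi_i^{\Gp}$ all rest on choices of vertices, Schurian subgroupoids and conjugating arrows that no single morphism respects, so tracking the $w_i$ through composites and through the splittings is the technical core. If the direct route turns out to be unwieldy, the fallback I would use is to first show that $\tilde{\mathbf{S}}$ is independent of all the choices up to a canonical isomorphism and satisfies a free-object-type universal property among split \textsc{fit} sequences over $\Hh$ — using Remark \ref{lem:univ_groups} on coproducts in slice categories of groups — from which functoriality of $\mathbf{S}\mapsto\tilde{\mathbf{S}}$ would be immediate.
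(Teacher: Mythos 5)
Your plan follows the paper's proof quite closely in overall strategy: part \textit{(i)} is identical (take $\tilde{s}^{\Gp}=\iota_0^{\Gp}$ and any set-theoretic section of $f^0$ for the vertices), and part \textit{(ii)} is the paper's argument --- reduce to the First Isomorphism Theorem for groups via the decomposition $\tilde{\Gg}/\ker(\tilde f)\cong(\tilde G/\ker(\tilde f^{\Gp}))\times\widehat{\Mu}$ --- with some extra, correct, detail about identifying the components of $\ker(\tilde f)$ with the fibres of $f^0$. The genuine divergence is in part \textit{(iii)}. The paper defines $\xi_i^{\Gp}\colon G_i\to G_k'$ by conjugating by the chosen Schurian arrow $[\lambda_k',\xi^0(\lambda_i)]$ of $\Gg'$ and then simply asserts $\varphi_k'\xi_i^{\Gp}=\varphi_i$; strictly speaking this identity only holds up to conjugation by the loop $[\mu,\mu_k']\,(f')^1([\lambda_k',\xi^0(\lambda_i)])\,[\mu_i,\mu]\in H$, since by the paper's own Remark \ref{rem:disagreeing_Schurian} one cannot in general choose the maximal Schurian subgroupoids so that $f'$ maps one into the other. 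Your correction elements $w_i=\iota_0^{\Gp}(h_i)$ address exactly this discrepancy, and they do compose coherently (the correction for a composite is $\iota_0^{\Gp}$ of the product of the individual correcting loops), so your version of the argument is, if anything, more robust than the one in the paper; the price is the bookkeeping you flag as the technical core, which the paper avoids by making its choices for $\mathbf{S}'$ depend on $\xi^0(\lambda_i)$ and leaving the residual conjugation implicit. Your fallback via a free-object-type universal property is essentially what the paper does separately in Theorem \ref{thm:universal_prop_Gtilde}, so it is a legitimate alternative route to functoriality. One small caveat: make sure the construction of $\tilde{\mathbf{S}}'$ (its chosen vertices and Schurian subgroupoids) is fixed once and for all, independently of the incoming morphism, or functoriality is only defined up to the canonical isomorphisms you mention.
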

	\begin{proof}
		\begin{enumerate}
			\item We define a section $\tilde{s}\colon \Hh\to \tilde{\Gg}$. Since $\points{\im(f)} = \Hh$, clearly $f^\Set=\tilde{f}^\Set$ is surjective: let $\tilde{s}^\Set$ be any set-theoretic section of $\tilde{f}^\Set$. Take the canonical injection $\iota_0^\Gp\colon H\to \tilde{G}$ as $\tilde{s}^\Gp$. This is obviously a section, since $\tilde{f}^\Gp s^\Gp(h) = \varphi_0 (h) = h$ for all $h\in H$.
			\item The morphism $\tilde{f}$ induces an isomorphism $ \tilde{\Gg}/\ker(\tilde{f})\cong (\tilde{G}/\ker(\tilde{f}^\Gp))\times \widehat{\Mu}$ (but observe that, here, the isomorphism between the maximal Schurian subgroupoids need not be induced by the restriction of $\tilde{f}$, for the reason expressed in \cref{rem:disagreeing_Schurian}). Now $\tilde{f}^\Gp$ is surjective, thus it induces an isomorphism $\tilde{G}/\ker(\tilde{f}^\Gp)\cong H$ by the First Isomorphism Theorem for groups, and this concludes.
			\item Let \[\mathbf{S}' =\Big( \begin{tikzcd}
				{\Nn'} & {\Gg'} & \Hh
				\arrow[from=1-1, to=1-2]
				\arrow["{f'}"',  from=1-2, to=1-3]
			\end{tikzcd} \Big)\]
			be another short exact sequence in $\Gpd$, and let $(\eta,\xi,\id_\Hh)$ be a morphism $\mathbf{S}\to \mathbf{S}'$ in $\SES_\Hh(\Gpd)$. All the constructions for $\mathbf{S}'$ are denoted with the same letter as for $\mathbf{S}$, with an apex.
			
			We induce a morphism $(\tilde{\eta},\tilde{\xi},\id_\Hh)\colon \tilde{\mathbf{S}}\to \tilde{\mathbf{S}}'$ as follows. It suffices to define $\tilde{\xi}$ so that $\tilde{f}'\tilde{\xi}= \tilde{f}$, since $\tilde{\eta}$ will be defined as the restriction of $\tilde{\xi}$ to $\ker(\tilde{f})$, and the image will automatically be contained in $\ker(\tilde{f}')$.  
			
			In order to define $\tilde{\xi}$, we decompose the connected components $\Gg_i$ as $G_i\times \Lambda_i$, with chosen vertices $\lambda_i\in\Lambda_i$ yielding groups $G_i = (\Gg_i)_{\lambda_i}$. We do the same for $\Gg'$, choosing vertices $\lambda_i'\in\Lambda_i'$ and groups $G_i'$. The image of $\xi$ restricted to $\Gg_i$ is entirely contained in a connected component $\Gg_k'$ of $\Gg'$. 
			
			We choose vertices $\lambda_{k,i}' = \xi^0(\lambda_i)$ in $\Gg'_k$, so that the commutativity of the diagram implies  $(f')^0(\lambda_{k,i}') = f^0 (\lambda_i)$; and we choose a vertex $\lambda_k'$ among the family $\{\lambda_{k,i}'\}_i$. Let $G'_{k,i} = (\Gg_{k,i}')_{\lambda_{k,i}'}$ and $G'_k = (\Gg_k')_{\lambda_k'}$. Up to composing with isomorphisms $ (\Gg_k')_{}\to (\Gg_k')_{\lambda_k'}$ given by the conjugation by the chosen arrow $[\lambda_k', \xi^0(\lambda_i)] $ in $\Gg'$, we obtain group homomorphisms $\xi_i^\Gp\colon G_i\to G_k'$ for all $G_i$ whose image is contained in $G_k'$. Moreover, by definition of the homomorphisms $\xi_i^\Gp$, the commutativity of the triangle defined by the maps $f,f', \xi$ implies, at the level of groups, that $\varphi_k'\xi_i^\Gp = \varphi_i$. Let $\tilde{\xi}^\Set = \xi^\Set$. Finally, let $\xi_0^\Gp = \id_H$.
			
			Composing $\xi_i^\Gp$ with the canonical morphism $G_k'\to \tilde{G}'$, we get homomorphisms $G_i\to \tilde{G}'$. By the universal property of the coproduct, these induce a homomorphism $\tilde{\xi}^\Gp\colon \tilde{G}\to\tilde{G}'$. We define $\tilde{\xi}^\Set= \xi^\Set$, and we now need to verify that $\tilde{f}'\tilde{\xi} = \tilde{f}$ and that $\tilde{\xi}\tilde{s} = \tilde{s}'$. 
			
			At the level of set-theoretic maps, $(\tilde{f}')^\Set\tilde{\xi}^\Set = \tilde{f}^\Set$ is trivially true because $\tilde{f}^\Set = f^\Set$, $(\tilde{f}')^\Set = (f')^\Set$, and $\tilde{\xi}^\Set = \xi^\Set$. Recall that $(\tilde{f})^\Gp$ and $(\tilde{f}')^\Gp$ are induced, respectively, by the families $\{\varphi_i\colon G_i\to H\}_{i\in I}$ and $\{\varphi_i'\colon G_i'\to H \}_{i\in I'}$ through the universal properties of $\tilde{G}$ and $\tilde{G}'$. By construction of $\tilde{\xi}$, and from the fact that $\varphi_k'\xi_i^\Gp = \varphi_i$ whenever the image of $\xi_i$ is contained in $\Gg_k'$, one easily has that $(\tilde{f}')^\Gp\tilde{\xi}^\Gp$ and $\tilde{f}^\Gp$ are both maps $\tilde{G}\to H$ that commute with the family of homomorphisms $\{\varphi_i\colon G_i\to H\}_{i\in I}$; and hence they are the same map, by the universal property of $\tilde{G}$: namely, $(\tilde{f}')^\Gp\tilde{\xi}^\Gp = \tilde{f}^\Gp$ holds.
			
			As for the relation $\tilde{\xi}\tilde{s} = \tilde{s}'$, it simply follows from $\tilde{s}$ and $\tilde{s}'$ being the canonical inclusions of $H$ into $\tilde{G}$ and $\tilde{G}'$ respectively.
			\qedhere
		\end{enumerate}
	\end{proof}
	\begin{remark}
		If we do not include $H = G_0$ and $\id_H = \varphi_0$ in the family on which we take the free product, then a group homomorphism \[\tilde{g}^\Gp\colon \left(\bigast_{i\in I}G_i \right)\;\to \; H\]
		can still be defined, and it is again surjective. Indeed, since $\Hh = \points{\im(f)}$, one has that $H$ is generated by $\bigcup_{i\in I} \varphi_i(G_i) = \bigcup_{i\in I}\tilde{g}^\Gp\iota_i^\Gp(G_i)$.  However, this is not a split epimorphism in general (as a counterexample, consider for instance $I = \{1\}$ and $\varphi_1\colon G_1\to H$ a group epimorphism that is not split). Moreover, the upper row is not necessarily a \textsc{fit} sequence (as a counterexample, consider the morphism in \cref{fig:image_not_subgroupoid_conn}). 
	\end{remark}
	The following theorem states that the canonical morphism $\mathbf{S}\to \tilde{\mathbf{S}}$ is universal among the morphisms in $\SES_\Hh(\Gpd)$ whose image lies in $\SES^{\text{split,\textsc{fit}}}_\Hh(\Gpd)$.
	\begin{theorem}\label{thm:universal_prop_Gtilde}
		Let $f,\Gg,\tilde{\Gg}$ be as above. The groupoid $\tilde{\Gg}$ satisfies the following universal property. For every split \textsc{fit} sequence in $\Gpd$ \[\mathbf{R}=\Big(\begin{tikzcd}
			{\Kk} & \Rr & \Hh
			\arrow[from=1-1, to=1-2]
			\arrow["r"below, from=1-2, to=1-3, shift right=1]
			\arrow["s"above, from=1-3, to=1-2, shift right=1]
		\end{tikzcd}\Big)\] and for every  morphism $(\eta,\xi,\id_\Hh)\colon \mathbf{S}\to \mathbf{R}$ in $\SES_\Hh(\Gpd)$ such that $sf = \xi$, there is a unique morphism $ (\tilde{\eta},\tilde{\xi},\id_\Hh) \colon \tilde{\mathbf{S}}\to \mathbf{R}$ in $\SES^{\mathrm{split},\text{\textup{\textsc{fit}}}}_\Hh(\Gpd)$ 
		such that the triangle in $\SES_\Hh(\Gpd)$
		\[\begin{tikzcd}
			{\tilde{\mathbf{S}}} & {\mathbf{R}} \\
			{\mathbf{S}}
			\arrow[from=1-1, to=1-2]
			\arrow[from=2-1, to=1-1]
			\arrow[from=2-1, to=1-2]
		\end{tikzcd}\]
		commutes.
	\end{theorem}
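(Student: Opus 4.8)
The plan is to reuse the strategy from the proof of Theorem~\ref{thm:construction_Gtilde}(3), where a morphism $\mathbf{S}\to\mathbf{S}'$ in $\SES_\Hh(\Gpd)$ is lifted to a morphism $\tilde{\mathbf{S}}\to\tilde{\mathbf{S}}'$; here I want instead to lift the given $(\eta,\xi,\id_\Hh)\colon\mathbf{S}\to\mathbf{R}$ directly to $\tilde{\mathbf{S}}\to\mathbf{R}$, using the hypothesis $\xi=sf$ in place of a factorisation through a second copy of the construction. As always we take $\Hh$ connected, so $\tilde\Gg=\tilde G\times\widehat\Lambda$ is connected. First I note that $\Rr$ is then connected too: the two-sided quotient $\Rr/\Kk\cong\Hh$ is a disjoint union indexed by the connected components of $\Rr$, with every summand nonempty, so connectedness of $\Hh$ leaves $\Rr$ a single component. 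Fix a decomposition $\Rr\cong R\times\widehat{\Rr^0}$ based at $\nu:=s^0(\mu)$; then $r^0(\nu)=\mu$ and $s^\Gp\colon H=\Hh_\mu\to R=\Rr_\nu$ is a group-theoretic section of $r^\Gp\colon R\to H$.

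Next I construct $\tilde\xi$ from a group part and a vertex part. Decompose each component $\Gg_i\cong G_i\times\widehat{\Lambda_i}$ with the chosen vertices $\lambda_i$ that define $\tilde\Gg$, and put $\mu_i=f^0(\lambda_i)$. From $\xi=sf$ we get $r\xi=rsf=f$; restricting to $\Gg_i$ and conjugating in $\Rr$ by $s^1([\mu,\mu_i])$, whose image under $r$ is the arrow $[\mu,\mu_i]$ used to define $\varphi_i$, gives homomorphisms $\xi_i^\Gp\colon G_i\to R$ with $r^\Gp\xi_i^\Gp=\varphi_i$ for $i\in I$; and with $\xi_0^\Gp:=s^\Gp$ we get $r^\Gp\xi_0^\Gp=\id_H=\varphi_0$. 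By Remark~\ref{lem:univ_groups}, i.e. because $\tilde f^\Gp\colon\tilde G\to H$ is the coproduct of $\{\varphi_i\}_{i\in I\cup\{0\}}$ in the slice $\Gp/H$, the family $\{\xi_i^\Gp\}$ induces a unique $\tilde\xi^\Gp\colon\tilde G\to R$ with $\tilde\xi^\Gp\iota_i^\Gp=\xi_i^\Gp$ for all $i$ and $r^\Gp\tilde\xi^\Gp=\tilde f^\Gp$. On vertices I set $\tilde\xi^\Set:=\xi^\Set$ (legitimate since $\iota^\Set=\id_\Lambda$), so that $r^0\tilde\xi^\Set=r^0\xi^0=f^0=\tilde f^\Set$. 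Assembling $(\tilde\xi^\Gp,\tilde\xi^\Set)$ into a morphism $\tilde\xi\colon\tilde\Gg\to\Rr$ via Proposition~\ref{prop:map_product_group_coarse}, with the coarse arrows of $\tilde\Gg$ sent to arrows of $\Rr$ prescribed by $\tilde s$ on $\tilde s(\Hh)$ and by $\xi$ on each $\iota(\Gg_i)$, gives the candidate.

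The verifications then run as in Theorem~\ref{thm:construction_Gtilde}: $r\tilde\xi=\tilde f$ holds on group parts (by the coproduct property of $\tilde G$, exactly as in part~(3)) and on vertex parts; $\tilde\xi\tilde s=s$ holds because $\tilde s^\Gp=\iota_0^\Gp$ and $\tilde\xi^\Gp\iota_0^\Gp=s^\Gp$, plus the vertex prescription; and $\tilde\xi\iota=\xi$ holds component by component by construction. Then $r\tilde\xi=\tilde f$ forces $\tilde\xi$ to map $\ker(\tilde f)$ into $\ker(r)=\Kk$, so $\tilde\eta:=\tilde\xi|_{\ker(\tilde f)}$ is defined and $(\tilde\eta,\tilde\xi,\id_\Hh)$ is a morphism in $\SES^{\mathrm{split},\textsc{fit}}_\Hh(\Gpd)$ with commuting triangle. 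For uniqueness, a second such $(\tilde\eta',\tilde\xi',\id_\Hh)$ must agree with $\tilde\xi$ on the wide subgroupoid $\iota(\Gg)$ (from $\tilde\xi'\iota=\xi$) — hence on each $\iota_i^\Gp(G_i)$ and on every vertex — and on $\tilde s(\Hh)$ (from $\tilde\xi'\tilde s=s$), hence on $\iota_0^\Gp(H)$; since $\tilde G=\bigast_{i\in I\cup\{0\}}G_i$, the subgroupoid generated by $\iota(\Gg)$ and $\tilde s(\Hh)$ is all of $\tilde\Gg$, whence $\tilde\xi'=\tilde\xi$ and $\tilde\eta'=\tilde\eta$.

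The hard part is not any single step but the coherent bookkeeping underneath: choosing the vertices $\lambda_i$, the vertex $\mu$ and base $\nu=s^0(\mu)$, and the maximal Schurian subgroupoids so that $r\xi=f$ really does descend, after conjugation, to $r^\Gp\xi_i^\Gp=\varphi_i$ — the same delicate matching that occupies most of the proof of Theorem~\ref{thm:construction_Gtilde}. It is also where the section $\tilde s$ must be handled carefully, since the uniqueness argument relies on $\iota(\Gg)$ together with $\tilde s(\Hh)$ generating $\tilde\Gg$.
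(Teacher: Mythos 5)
Your existence argument follows the paper's proof closely, with one genuine improvement. Where the paper spends what it calls ``the crucial part'' of its proof showing that $\points{\im(\xi)}$ is a connected subgroupoid of $\Rr$ (via an alternating chain of the relations ``connected in $\Gg$'' and ``identified by $f^0$''), you instead observe that $\Rr$ itself is connected: the connected components of $\Rr/\Kk$ biject with those of $\Rr$, since both the arrows of $\Kk$ and the representatives of arrows of $\Rr/\Kk$ live inside single components of $\Rr$, and $\Rr/\Kk\cong\Hh$ is connected by the \textsc{fit} hypothesis on $\mathbf{R}$. This is correct, uses the \textsc{fit} hypothesis in essentially the same way the paper does, and immediately yields the connecting arrows needed to define the $\xi_i^{\Gp}$ by conjugation. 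The remaining steps --- conjugating by $s^1([\mu,\mu_i])$, invoking Remark~\ref{lem:univ_groups}, forcing $\tilde{\xi}^{\Set}=\xi^{\Set}$, restricting to kernels --- match the paper's. (The paper's stronger conclusion that $\points{\im(\xi)}$ is connected is only re-used in a later remark to show $\tilde{\Gg}$ is connected, so nothing is lost for this theorem.)

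The uniqueness argument, however, has a genuine gap: the claim that $\iota(\Gg)$ together with $\tilde{s}(\Hh)$ generates $\tilde{\Gg}$ is false in general. The vertex map $\tilde{s}^0$ is only a set-theoretic section of $f^0$, so it can miss a component $\Lambda_i$ entirely; in that case no arrow of the generated subgroupoid leaves $\Lambda_i$, and the generated subgroupoid is disconnected while $\tilde{\Gg}$ is connected. Concretely, for $\Gg=\One_{\{a,b\}}$ and $\Hh$ the trivial group one gets $\tilde{\Gg}\cong\widehat{2}$, while $\iota(\Gg)\cup\tilde{s}(\Hh)$ consists only of identity loops and generates $\One_{\{a,b\}}\subsetneq\widehat{2}$; and $\widehat{2}$ does admit distinct morphisms to, say, $\Z/2\Z$ that agree on all loops, so agreement on the generated subgroupoid really does not determine $\tilde{\xi}'$ on the coarse arrows joining distinct components. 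The paper closes uniqueness by a different route: it pins the morphism down through the decomposition $\tilde{\Gg}\cong\tilde{G}\times\widehat{\Lambda}$, where $\tilde{\xi}^{\Set}$ is forced to equal $\xi^{\Set}$ and $\tilde{\xi}^{\Gp}$ is forced by the uniqueness clause of the slice-coproduct property of $\tilde{G}$ in Remark~\ref{lem:univ_groups}, i.e.\@ via the $(\Gp,\Set)$ components relative to the chosen maximal Schurian subgroupoids rather than via a generation argument. To repair your version you would have to additionally control the values of $\tilde{\xi}'$ on a chosen coarse subgroupoid $\widehat{\Lambda}\subseteq\tilde{\Gg}$ --- which is precisely the bookkeeping you deferred to the last paragraph.
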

	\begin{proof}
		Let $\xi\colon \Gg\to\Rr$ be the unique morphism that restricts to the morphisms $\xi_i\colon \Gg_i\to \Rr$ on the connected components $\Gg_i$. Let $\nu_i = \xi^0_i(\lambda_i)$, so that $r^0(\nu_i)= \mu_i$ holds by the assumption $r\xi = f$. Let $\Nu = \Rr^0$, choose $\nu\in\Nu$ such that $r^0(\nu)= \mu$, and let $R = \Rr_\nu$. 
		
		We first prove that $\points{\im(\xi)}$ is a connected subgroupoid of $\Rr$; i.e.\@, that for all $a,b\in\Lambda$ there is an arrow in $\Rr$ between $\xi^0(a)$ and $\xi^0(b)$. This is actually the crucial part of the Theorem. We start by observing two facts.
		\begin{enumerate}
			\item[1)] If $a,b\in\Lambda_i$ for the same $i$, then they are connected by an arrow $x$ in $\Gg$, and hence their images are connected by $\xi^1(x)$ in $\Rr$.
			\item[2)] If $f^0(a) = f^0(b)$ in $\Mu$ (i.e.\@ $a$ and $b$ are connected in $\tilde{\Nn}$), one has the chain of implications
			\begin{align*}
				f^0(a) = f^0(b)&\implies r^0\xi^0(a) = r^0\xi^0(b)\\
				&\implies \xi^0(a)\text{ and }\xi^0(b)\text{ are connected in }\Kk\\
				&\implies \xi^0(a)\text{ and }\xi^0(b)\text{ are connected in }\Rr,
			\end{align*} 
			where the first implication comes from $f = r\xi$.
		\end{enumerate}
		
		Since $\points{\im(f)} = \Hh$ is connected, for all $a,b\in\Lambda$ one can find a sequence\footnote{The notion of connectivity in quivers is defined by the existence of a \textit{finite} path; thus such a finite sequence can always be found, and no problems arise if $\Lambda,\Mu,\Nu$ are infinite sets.} $a=a_1, a_2, a_3,\dots, a_n = b$ such that $a_{1}$ and $ a_2$ are connected in $\Gg$, $f^0(a_{2}) = f^0(a_{3})$, $a_3$ and $a_4$ are connected in $\Gg$, $f^0(a_4) = f^0(a_5)$, etc. If we apply 1) and 2) on this sequence, we obtain that $\im(\xi)$ is a connected quiver, and hence $\points{\im(\xi)}$, which has the same set of vertices as $\im(f)$, is a connected subgroupoid of $\Rr$.
		
		Since $\points{\im(\xi)}$ is connected, the chosen coarse subgroupoid of $\Hh$ that we identified with $\widehat{\Mu}$ is sent by $s$ into a coarse subgroupoid of $\Rr$, which we identify with $\widehat{\Nu}$; and there exist isomorphisms $C_{[\nu, \nu_i]}\colon \Rr_{\nu_i}\to \Rr_\nu = R$. Up to composing $\xi_i^\Gp$ with $C_{[\nu,\nu_i]}$, we obtain morphisms $\psi_i\colon G_i\to R$ for $i\in I$. Up to composing with similar isomorphisms, we also get a homomorphism $\psi_0 = s^\Gp\colon G_0=H\to R$, induced by the section $s$. Thus, by the universal property of $\tilde{G}$, one gets a unique morphism $\tilde{\xi}^\Gp\colon \tilde{G}\to R$. 
		
		The embedding of $\Gg^0$ into $\tilde{\Gg}^0$ is the identity $\id_\Lambda$, and the triangle given by $\id_\Lambda$, $\xi^0$ and $\tilde{\xi}^0$ must commute, therefore the definition $\tilde{\xi}^\Set = \xi^\Set$ is forced. One easily gets $r^\Set\tilde{\xi}^\Set = \tilde{f}^\Set$, and $r^\Gp\tilde{\xi}^\Gp = \tilde{f}^\Gp$ holds from \cref{lem:univ_groups}, thus $r\tilde{\xi} = \tilde{f}$.
		
		In order for $(\tilde{\eta}, \tilde{\xi}, \id_\Hh)$ to be a morphism of short exact sequences, we need to define $\tilde{\eta}$ as the restriction of $\tilde{\xi}$ to $\ker(\tilde{f})$. Clearly $\eta^\Set = \tilde{\eta}^\Set$. If an arrow $x$ lies in $\Nn^1$, one has $\tilde{f}^1\xi^1(x) = f^1(x) = 1$, thus the arrow $\xi^1(x) = \eta^1(x)$ lies in $\ker(\tilde{f})$, and this proves that the triangle
		\[\begin{tikzcd}
			{\ker(\tilde{f})} & \Kk \\
			\Nn
			\arrow["{\tilde{\eta}}", from=1-1, to=1-2]
			\arrow[hook, from=2-1, to=1-1]
			\arrow["\eta"', from=2-1, to=1-2]
		\end{tikzcd}\]
		commutes. Thus $(\tilde{\eta}, \tilde{\xi}, \id_\Hh)$ is a morphism in $\SES_\Hh(\Gpd)$, and the composition of $(\tilde{\eta}, \tilde{\xi}, \id_\Hh)$ with the canonical morphism $\mathbf{S}\to \tilde{\mathbf{S}}$ is a factorisation of the morphism $(\eta,\xi,\id_\Hh)$.
		
		We only need to prove that $(\tilde{\eta}, \tilde{\xi}, \id_\Hh)$ is a morphism in $\SES_\Hh^{\text{split}}(\Gpd)$. Namely, we need to prove that $\tilde{\xi}\tilde{s} = s$. If we call $\iota$ the canonical injection $\Gg\to \tilde{\Gg}$, one has
		\[ \tilde{\xi}\tilde{s} f = \tilde{\xi}\iota = \xi = sf,  \]
		whence $\tilde{\xi}\tilde{s} = s$ by cancelling the epimorphism $f$ on the right. 
	\end{proof}
	\begin{remark}
		Observe that the universal property of $\tilde{\mathbf{S}}$ is very close to the one of a free object. Indeed, it would have been a free object if, in the statement of \cref{thm:universal_prop_Gtilde}, \textit{every} morphism $(\eta,\xi,\id_\Hh)$ in $\SES_\Hh(\Gpd)$ induced a unique morphism $(\tilde{\eta},\tilde{\xi},\id_\Hh)$ in $\SES_\Hh^\mathrm{split}(\Gpd)$. However, in \cref{thm:universal_prop_Gtilde} it is moreover assumed that $(\eta,\xi,\id_\Hh)$ satisfies $sf = \xi$. This breaks the property of a free object.
	\end{remark}
	Since $\tilde{\mathbf{S}}$ satisfies a universal property, which determines it uniquely up to isomorphism, we may drop the explicit construction, and define $\tilde{\mathbf{S}}$ abstractly through the sole universal property. For the rest of this paper, though, we shall always refer to the explicit realisation of $\tilde{\mathbf{S}}$, in order to ease computations.
	\begin{definition}
		We call $\ker(\tilde{f})$ the \textit{virtual kernel} of $f$.
	\end{definition}
	\begin{remark}
		The above definition is well posed, independently of the concrete realisation of $\tilde{\mathbf{S}}$. Indeed, if $(\tilde{\Gg}, \tilde{f})$ and $(\tilde{\Gg}',\tilde{f}')$ satisfy the universal property in \cref{thm:universal_prop_Gtilde}, then there is an isomorphism $\tilde{\Gg}\cong \tilde{\Gg}'$ that closes the triangle with $\tilde{f}$ and $\tilde{f}'$, and hence induces an isomorphism $\ker(\tilde{f})\cong \ker(\tilde{f}')$.
	\end{remark}
	\begin{remark}
		In the hypotheses of \cref{thm:construction_Gtilde}, the universal groupoid $\tilde{\Gg}$ is always connected. One can see it as a consequence of \cref{thm:universal_prop_Gtilde}, with $\Rr = \tilde{\Gg}$, $r = \tilde{f}$ and $\eta,\xi$ the canonical morphisms: we have seen in the proof of \cref{thm:universal_prop_Gtilde} that $\points{\im(\xi)}$ is connected, but $\points{\im(\xi)}^0 = \Gg^0 = \tilde{\Gg}^0$, thus any two vertices in $\tilde{\Gg}^0$ are connected. 
	\end{remark}
	
	\begin{remark}\label{rem:splitting_needed} Consider the case when $\Gg = G$ and $\Hh= H$ are groups, and $f$ is a group homomorphism. Here $ \tilde{\Gg}= \tilde{G} =  (G*H) $ is also a group. The induced injection $H\to \tilde{G}$ is a section for $\tilde{f}$. 
		
		This constructs above $\ker(f)\to G\to H$ a `minimal' short exact sequence that splits. This is not the minimal short exact sequence enjoying a First Isomorphism Theorem: indeed, $\ker(f)\to G\to H$ is already a \textsc{fit} sequence. Thus the category $\SES_\Hh^{\text{split,\textsc{fit}}}(\Gpd)$ cannot be replaced with $\SES_\Hh^{\text{\textsc{fit}}}(\Gpd)$ in the statement of \cref{thm:universal_prop_Gtilde}.
	\end{remark}
	
	\subsection{Lifted First Isomorphism Theorem for Schurian groupoids} We denote by $\GpdSchur$ the full subcategory of $\Gpd$ consisting of Schurian groupoids. 
	
	In \cref{thm:universal_prop_Gtilde}, the sequence $\tilde{\mathbf{S}}$ is a universal splitting \textsc{fit} sequence for $\mathbf{S}$; and the word `splitting' cannot be removed from the universal property, as \cref{rem:splitting_needed} shows. However, in the subcategory $\GpdSchur$, the sequence $\tilde{\mathbf{S}}$ will actually be a universal \textsc{fit} sequence.
	\begin{proposition}\label{prop:universal_FIT_for_Shurian}
		In the construction of $\tilde{\mathbf{S}}$ from \cref{thm:construction_Gtilde}, let $\Gg$ and $\Hh$ be Schurian. Then $\tilde{\Gg}$ is Schurian (actually a coarse groupoid), and it satisfies the following universal property. If\[\mathbf{R} = \Big( \begin{tikzcd}
			\Kk & \Rr & \Hh
			\arrow[hook, from=1-1, to=1-2]
			\arrow["r", from=1-2, to=1-3]
		\end{tikzcd} \Big)\] is a \textsc{fit} sequence of Schurian groupoids, and $(\eta,\xi,\id_\Hh)\colon \mathbf{S}\to\mathbf{R}$ is a morphism in $\SES_\Hh(\GpdSchur)$, then there is a unique morphism $(\tilde{\eta},\tilde{\xi},\id_\Hh)\colon \tilde{\mathbf{S}}\to \mathbf{R}$ that makes the obvious triangle commute. 
		
		In other words: the functor $\mathbf{S}\mapsto \tilde{\mathbf{S}}$ from $\SES_\Hh(\GpdSchur)$ to $\SES_\Hh^{\text{\upshape\textsc{fit}}}(\GpdSchur)$ admits the inclusion $\SES_\Hh^{\text{\upshape\textsc{fit}}}(\GpdSchur)\to \SES_\Hh(\GpdSchur)$ as a right adjoint.
	\end{proposition}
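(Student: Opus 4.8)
The plan is to exploit the fact that over Schurian data the construction of $\tilde{\mathbf{S}}$ degenerates completely, so that the two features of Theorem~\ref{thm:universal_prop_Gtilde} responsible for the word ``split'' --- the section $\tilde s$ and the free-product universal property --- play no role. First I would reduce to $\Hh$ connected and recall that, in the construction, $\tilde{\Gg}=\tilde G\times\widehat{\Lambda}$ with $\tilde G=\bigast_{i\in I\cup\{0\}}G_i$, where $G_i=(\Gg_i)_{\lambda_i}$ is the isotropy group of a connected component $\Gg_i$ of $\Gg$ and $G_0=H=\Hh_\mu$. Since a connected Schurian groupoid is coarse and hence has trivial isotropy (Proposition~\ref{prop:product_groupd_coarse_groupoid}), all the $G_i$ and $H$ are trivial, so $\tilde G$ is trivial and $\tilde{\Gg}\cong\widehat{\Lambda}$. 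Then $\tilde{\Gg}$, and with it $\ker(\tilde f)$, is Schurian, and $\tilde{\mathbf{S}}$ is \textsc{fit} by Theorem~\ref{thm:construction_Gtilde}(2); so restricting the functor of Theorem~\ref{thm:construction_Gtilde}(3) already yields the functor $\mathbf{S}\mapsto\tilde{\mathbf{S}}$ from $\SES_\Hh(\GpdSchur)$ to $\SES^{\text{\textup{\textsc{fit}}}}_\Hh(\GpdSchur)$, together with the canonical morphism $\mathbf{S}\to\tilde{\mathbf{S}}$, which is the identity on vertices.

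Next I would establish the universal property. Given a \textsc{fit} sequence $\mathbf{R}=(\Kk\to\Rr\overset{r}{\to}\Hh)$ of Schurian groupoids and a morphism $(\eta,\xi,\id_\Hh)\colon\mathbf{S}\to\mathbf{R}$, note first that any factorisation $(\tilde\eta,\tilde\xi,\id_\Hh)$ is forced to satisfy $\tilde\xi^0=\xi^0$, because $\mathbf{S}\to\tilde{\mathbf{S}}$ is the identity on $\Gg^0=\tilde{\Gg}^0=\Lambda$. The crux is that $\points{\im(\xi)}$ is a \emph{connected} subgroupoid of $\Rr$: this is exactly what the first part of the proof of Theorem~\ref{thm:universal_prop_Gtilde} shows, and its argument uses only $f=r\xi$, the connectedness of $\Hh$, and the hypothesis that $\mathbf{R}$ is \textsc{fit} --- not the splitting, nor $sf=\xi$. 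Consequently $\xi^0(\Lambda)=\points{\im(\xi)}^0$ lies in a single connected component of $\Rr$; since $\Rr$ is Schurian, any two vertices of that component are joined by a unique arrow, and such arrows compose to the unique arrows between their endpoints, so there is precisely one functor $\tilde\xi\colon\widehat{\Lambda}\to\Rr$ with $\tilde\xi^0=\xi^0$, namely the one sending $[\lambda,\mu]$ to the unique arrow $\xi^0(\lambda)\to\xi^0(\mu)$. It restricts to $\xi$ along $\iota\colon\Gg\hookrightarrow\tilde{\Gg}$ because in the Schurian groupoid $\Rr$ two arrows with equal source and target coincide; and, being forced, it yields uniqueness of the factorisation.

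It then remains to check that $(\tilde\eta,\tilde\xi,\id_\Hh)$ lives in $\SES^{\text{\textup{\textsc{fit}}}}_\Hh(\GpdSchur)$. Here $r\tilde\xi=\tilde f$ since both are functors $\widehat{\Lambda}\to\Hh$ with object map $r^0\xi^0=f^0=\tilde f^0$ and $\Hh$ is connected Schurian; and $\tilde\eta$, defined as the restriction of $\tilde\xi$ to $\ker(\tilde f)$, has image in $\ker(r)=\Kk$ (from $\tilde f(x)=1\Rightarrow r\tilde\xi(x)=1$), so $(\tilde\eta,\tilde\xi,\id_\Hh)$ is a morphism of short exact sequences, and the remaining triangle commutes because everything in sight is a restriction of $\iota$. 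Finally, the universal property just proved says precisely that the canonical morphism $\mathbf{S}\to\tilde{\mathbf{S}}$ is a universal arrow from $\mathbf{S}$ to the inclusion $\SES^{\text{\textup{\textsc{fit}}}}_\Hh(\GpdSchur)\hookrightarrow\SES_\Hh(\GpdSchur)$, which is the same as saying that $\mathbf{S}\mapsto\tilde{\mathbf{S}}$ is left adjoint to that inclusion, the canonical morphisms being its unit.

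I expect the only genuine obstacle to be the step flagged above: one must make sure nothing is lost by discarding the section $s$, which in Theorem~\ref{thm:universal_prop_Gtilde} supplied the homomorphism $\psi_0\colon G_0=H\to R$ --- but $H$ is now trivial, so $\psi_0$ is forced and the free-product universal property of $\tilde G$ reduces to the statement $\tilde G=1$. All of this becomes transparent through the dictionary ``Schurian groupoid $=$ equivalence relation'': $\tilde{\Gg}=\widehat{\Lambda}$ is the total relation on $\Lambda$, a morphism into a Schurian groupoid is a morphism of relations and hence determined by its underlying map of vertices whenever it exists at all, and the proposition says no more than that the total relation on $\Lambda$ is the universal equivalence relation through which the given morphism of relations factors.
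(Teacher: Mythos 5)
Your proof is correct and is in substance the same as the paper's: the crux in both is the alternating chain $a_1\equiv_\Gg a_2\,\tilde\equiv_\Nn\,a_3\equiv_\Gg\cdots$ showing that $\points{\im(\xi)}$ is connected (equivalently, that $\tilde\xi^0$ is a morphism of equivalence relations), which uses only $f=r\xi$, connectedness of $\Hh$, and the \textsc{fit} hypothesis on $\mathbf{R}$ --- not the splitting. The only difference is presentational: the paper reformulates the whole statement in the dictionary of equivalence relations (Proposition \ref{prop:universal_FIT_for_equivalences}) and redoes the chain argument there, whereas you stay in groupoid language and observe directly that the relevant portion of the proof of Theorem \ref{thm:universal_prop_Gtilde} never invokes the section $s$; your remark that $\tilde G$ is trivial, so $\tilde{\Gg}\cong\widehat{\Lambda}$, correctly accounts for why the free-product and splitting machinery degenerates.
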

	The proof of \cref{prop:universal_FIT_for_Shurian} will be handled more easily in a purely set-theoretic language. We thereby reformulate it in terms of sets and equivalence relations.
	
	As we anticipated, a Schurian groupoid $\Gg$ is simply an equivalence relation $\equiv_\Gg$ on $\Gg^0$; see \cite[Example 2]{GroupsToGroupoidsBrown}. 
	\begin{definition}
		Let $(\Lambda, \equiv_\Gg)$ and $(\Mu,\equiv_\Hh)$ be pairs of sets with equivalence relations. A map $f^0 \colon \Lambda \to \Mu$ satisfying
		\[ a\equiv_\Gg b \implies f^0(a)\equiv_\Hh f^0(b) \] 
		is called a \textit{morphism of equivalence relations}.
		
		For a morphism $f^0\colon (\Lambda, \equiv_\Gg)\to (\Mu,\equiv_\Hh)$, the \textit{kernel} is $\ker(f^0) = (\Lambda,\equiv_\Nn)$, where the equivalence relation $\equiv_\Nn$ on $\Lambda$ is defined as
		\[ a\equiv_\Nn b \iff a\equiv_\Gg b\,\text{ and }f^0(a) = f^0(b). \]
	\end{definition}
	It is easy to see that a morphism $f\colon \Gg\to \Hh$ between Schurian groupoids is identified with a map $f^0\colon \Gg^0\to \Hh^0$ between the sets of vertices which is a morphism of equivalence relations $(\Gg^0, \equiv_\Gg)\to (\Hh^0, \equiv_\Hh)$. Then,  $\GpdSchur$ is canonically equivalent to the category of equivalence relations in $\Set$. The above definition of kernels corresponds to the kernels of morphisms in $\GpdSchur$. 
	\begin{definition}If $f^0\colon \Lambda\to \Mu$ is a map of sets, and $\equiv$ is an equivalence relation on $\Lambda$, we call \textit{push-forward} of $\equiv$ the equivalence relation ${}_{f^0}\!\!\equiv$ on $\Mu$ generated by $\mu\, {}_{f^0}\!\!\equiv \mu'$ if there exist $\lambda,\lambda'\in \Lambda$, $\lambda\equiv \lambda'$, $f^0(\lambda)=\mu$, $f^0(\lambda') = \mu'$.\end{definition}
	Observe that, if $f$ is a morphism between two Schurian groupoids $\Gg$ and $\Hh$, then the subgroupoid of $\Hh$ corresponding to ${}_{f^0}\!\!\equiv_\Gg$ is exactly $\points{\im(f)}$.
	
	The short exact sequence $\Nn\to\Gg\to\Hh$ in $\GpdSchur$ with $\Hh$ connected, then, translates as a sequence of morphisms of equivalence relations
	\[\begin{tikzcd}
		{(\Lambda,\equiv_\Nn)} & {(\Lambda,\equiv_\Gg)} & {(\Mu, \Mu\times\Mu)}
		\arrow["{\iota^0}", from=1-1, to=1-2]
		\arrow["{f^0}", from=1-2, to=1-3]
	\end{tikzcd}\]
	such that ${}_{f^0}\!\!\equiv_\Gg$ is $\Mu\times \Mu$, and $(\Lambda,\equiv_\Nn)$ is the kernel of $f^0$. 
	
	Since the inclusion $\id_\Lambda\colon(\Lambda,\equiv_\Nn)\to (\Lambda,\equiv_\Gg)$ is a morphism of equivalence relations, the relation $\equiv_\Gg$ induces a well-defined relation on $\Lambda/\!\equiv_\Nn$, which we denote again by $\equiv_\Gg$. 
	
	\begin{remark}\label{rem:FIT_sequence_Schurian}The quotient groupoid $\Gg/\Nn$ corresponds to the equivalence relation $\equiv_\Gg$ on the set $\Lambda/\!\equiv_\Nn$.
		As a consequence, the short exact sequence $\Nn\to\Gg\to\Hh$ is \textsc{fit} if and only if $f^0$ induces a bijection $\Lambda/\!\equiv_\Nn\,\to \Mu$ that is an isomorphism of equivalence relations. This happens if and only if the condition
		\begin{equation}\label{eq:condition_FIT_Schurian}  f^0(a) = f^0(b) \iff a\equiv_\Nn b \end{equation}
		holds for all $a,b\in\Lambda$, as it is easy to see. Indeed, the surjectivity of $f^0$ is obvious, while the implication `$\!\implies\!$' in condition \eqref{eq:condition_FIT_Schurian} translates the injectivity. If \eqref{eq:condition_FIT_Schurian} holds, then, $f^0$ induces a bijection $\Lambda/\!\equiv_\Nn\,\to \Mu$ which is a morphism of equivalence relations; and the inverse map is also a morphism, again by the implication `$\!\implies\!$' in \eqref{eq:condition_FIT_Schurian}.\end{remark}
	We can finally translate \cref{prop:universal_FIT_for_Shurian} word by word in the set-theoretic language, and prove it.
	\begin{proposition}\label{prop:universal_FIT_for_equivalences}
		Let $\equiv_\Gg$ be an equivalence relation on a set $\Lambda$, let $\Mu$ be a set equipped with the coarsest equivalence relation $\Mu\times\Mu$, and let $f^0\colon \Lambda\to \Mu$ be a morphism of equivalence relations, such that the push-forward of $\equiv_\Gg$ via $f^0$ is $\Mu\times \Mu$ (in particular, $f^0$ must be surjective). 
		
		Let $(\Lambda,\equiv_\Nn) = \ker(f^0)$, and let $\tilde{\equiv}_\Nn$ be the equivalence relation on $\Lambda$
		\[  \lambda\,\tilde{\equiv}_\Nn\,\lambda' \iff f^0(\lambda)= f^0(\lambda')\]
		(in other words, $\tilde{\equiv}_\Nn$ is the kernel of $f^0$ seen as a morphism $(\Lambda, \Lambda\times\Lambda)\to (\Mu,\Mu\times\Mu)$).
		
		Then $(\Lambda,\tilde{\equiv}_\Nn)\to (\Lambda, \Lambda\times\Lambda)\to (\Mu,\Mu\times\Mu)$ is a \textsc{fit} sequence, the triple $(\id_\Lambda,\id_\Lambda, \id_\Mu)$ is a morphism of short exact sequences\footnote{Observe that $\id_\Lambda\colon (\Lambda,\equiv_\Gg)\to (\Lambda,\Lambda\times\Lambda)$ is obviously a morphism, but $\id_\Lambda\colon (\Lambda,\Lambda\times\Lambda)\to (\Lambda,\equiv_\Gg)$ is not a morphism, unless $\equiv_\Gg\, =  \Lambda\times\Lambda$. The same holds for $\id_\Lambda\colon (\Lambda,\equiv_\Nn)\to (\Lambda,\tilde{\equiv}_\Nn)$. This demonstrates the well-known fact that, in the category of equivalence relations, bijective morphisms need not be isomorphisms.}
		\[\begin{tikzcd}
			{(\Lambda,\tilde{\equiv}_\Nn)} & {(\Lambda,\Lambda\times\Lambda)} & {(\Mu,\Mu\times \Mu)} \\
			{(\Lambda,\equiv_\Nn)} & {(\Lambda,\equiv_\Gg)} & {(\Mu,\Mu\times \Mu)}
			\arrow[from=1-1, to=1-2]
			\arrow["{f^0}", from=1-2, to=1-3]
			\arrow[equals, from=1-3, to=2-3]
			\arrow["{\id_\Lambda}", from=2-1, to=1-1]
			\arrow[from=2-1, to=2-2]
			\arrow["{\id_\Lambda}", from=2-2, to=1-2]
			\arrow["{f^0}", from=2-2, to=2-3]
		\end{tikzcd}\]	
		and the sequence $(\Lambda,\tilde{\equiv}_\Nn)\to (\Lambda, \Lambda\times\Lambda)\to (\Mu,\Mu\times\Mu)$ is `minimal' above the sequence $(\Lambda,\equiv_\Nn)\to (\Lambda,\equiv_\Gg)\to (\Mu,\Mu\times\Mu)$ in the following universal sense: if the sequence 
		\[\begin{tikzcd}
			{\ker(r^0)} & {(\Nu,\equiv_\Rr)} & {(\Mu,\Mu\times \Mu)}
			\arrow[from=1-1, to=1-2]
			\arrow["{r^0}", from=1-2, to=1-3]
		\end{tikzcd}\]
		is a \textsc{fit} sequence of equivalence relations, and if there is a morphism of short exact sequences
		\[\begin{tikzcd}
			{\ker(r^0)} & {(\Nu,\equiv_\Rr)} & {(\Mu,\Mu\times \Mu)} \\
			{(\Lambda,\equiv_\Nn)} & {(\Lambda,\equiv_\Gg)} & {(\Mu,\Mu\times \Mu)}
			\arrow[from=1-1, to=1-2]
			\arrow["{r^0}", from=1-2, to=1-3]
			\arrow[equals, from=1-3, to=2-3]
			\arrow["{\eta^0}", from=2-1, to=1-1]
			\arrow[from=2-1, to=2-2]
			\arrow["{\xi^0}", from=2-2, to=1-2]
			\arrow["{f^0}", from=2-2, to=2-3]
		\end{tikzcd}\]
		then there exists a unique morphism of short exact sequences 
		\[\begin{tikzcd}
			{\ker(r^0)} & {(\Nu,\equiv_\Rr)} & {(\Mu,\Mu\times \Mu)} \\
			{(\Lambda,\tilde{\equiv}_\Nn)} & {(\Lambda,\Lambda\times\Lambda)} & {(\Mu,\Mu\times \Mu)}
			\arrow[from=1-1, to=1-2]
			\arrow["{r^0}", from=1-2, to=1-3]
			\arrow[equals, from=1-3, to=2-3]
			\arrow["{\tilde{\eta}^0}", from=2-1, to=1-1]
			\arrow[from=2-1, to=2-2]
			\arrow["{\tilde{\xi}^0}", from=2-2, to=1-2]
			\arrow["{f^0}", from=2-2, to=2-3]
		\end{tikzcd}\]
		that makes the obvious triangle commute.
	\end{proposition}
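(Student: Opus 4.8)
The plan is to dispatch the first two assertions by direct inspection and then reduce the universal property to a single connectivity statement, which carries all the content. For the claim that $(\Lambda,\tilde{\equiv}_\Nn)\to(\Lambda,\Lambda\times\Lambda)\to(\Mu,\Mu\times\Mu)$ is a \textsc{fit} sequence, I would first observe that $(\Lambda,\tilde{\equiv}_\Nn)$ is, by its very definition, the kernel of $f^0$ regarded as a morphism $(\Lambda,\Lambda\times\Lambda)\to(\Mu,\Mu\times\Mu)$, and that this $f^0$ is an epimorphism, being surjective with codomain carrying the coarsest relation; so the top row is a short exact sequence, and it is \textsc{fit} by Remark \ref{rem:FIT_sequence_Schurian}, since condition \eqref{eq:condition_FIT_Schurian} for it reads $f^0(\lambda)=f^0(\lambda')\iff\lambda\,\tilde{\equiv}_\Nn\,\lambda'$, which is exactly how $\tilde{\equiv}_\Nn$ was defined. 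That $(\id_\Lambda,\id_\Lambda,\id_\Mu)$ is a morphism of short exact sequences is equally quick: $\id_\Lambda\colon(\Lambda,\equiv_\Nn)\to(\Lambda,\tilde{\equiv}_\Nn)$ is a morphism of equivalence relations because $\lambda\equiv_\Nn\lambda'$ entails $f^0(\lambda)=f^0(\lambda')$, the middle vertical $\id_\Lambda\colon(\Lambda,\equiv_\Gg)\to(\Lambda,\Lambda\times\Lambda)$ is one trivially, and the two squares commute on underlying set maps.

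For the universal property I would begin by noting that in any morphism of short exact sequences of Schurian groupoids the kernel-inclusion arrows are identities on underlying sets; hence the leftmost and middle vertical maps $\eta^0$ and $\xi^0$ of the given morphism $\mathbf{S}\to\mathbf{R}$ share the same underlying map $\Lambda\to\Nu$, which I keep calling $\xi^0$, and it satisfies $r^0\xi^0=f^0$ together with $\lambda\equiv_\Gg\lambda'\Rightarrow\xi^0(\lambda)\equiv_\Rr\xi^0(\lambda')$. Because the middle component of the canonical morphism $\mathbf{S}\to\tilde{\mathbf{S}}$ is $\id_\Lambda$, commutativity of the requested triangle forces the underlying set maps of $\tilde{\xi}^0$ and $\tilde{\eta}^0$ to be precisely $\xi^0$ and $\eta^0$; since a morphism of equivalence relations is determined by its underlying set map, uniqueness follows at once, and only existence remains. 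Existence in turn comes down to one point: that this same $\xi^0$, now regarded on the source $(\Lambda,\Lambda\times\Lambda)$, is still a morphism of equivalence relations into $(\Nu,\equiv_\Rr)$ — equivalently, that $\xi^0(\Lambda)$ is contained in a single $\equiv_\Rr$-class. Granting that, the remaining verifications (that $\tilde{\xi}^0$, resp.\ $\tilde{\eta}^0$, restricts to a morphism into $\ker(r^0)$, and that $r^0\tilde{\xi}^0=f^0$) follow formally from $r^0\xi^0=f^0$ and the definition of $\tilde{\equiv}_\Nn$.

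The main obstacle, therefore, is showing that $\xi^0(\Lambda)$ lies in one $\equiv_\Rr$-class, and this is where both standing hypotheses enter. On one side, \textsc{fit}-ness of $\mathbf{R}$ translates, again via Remark \ref{rem:FIT_sequence_Schurian}, into the implication $r^0(n)=r^0(n')\Rightarrow n\equiv_\Rr n'$. On the other side, the hypothesis that the push-forward of $\equiv_\Gg$ along $f^0$ equals $\Mu\times\Mu$ yields, for any $\lambda,\lambda'\in\Lambda$, a finite chain $f^0(\lambda)=\mu_0,\mu_1,\dots,\mu_k=f^0(\lambda')$ in $\Mu$ in which every consecutive pair $\mu_j,\mu_{j+1}$ is witnessed by some $a_j\equiv_\Gg b_j$ in $\Lambda$ with $f^0(a_j)=\mu_j$ and $f^0(b_j)=\mu_{j+1}$. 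Applying $\xi^0$ and alternating the two inputs — the \textsc{fit} implication to pass between consecutive elements sharing an $f^0$-image (first between $\xi^0(\lambda)$ and $\xi^0(a_0)$, then between $\xi^0(b_j)$ and $\xi^0(a_{j+1})$, finally between $\xi^0(b_{k-1})$ and $\xi^0(\lambda')$), and the compatibility of $\xi^0$ with $\equiv_\Gg$ to cross each witness $a_j\equiv_\Gg b_j$ — produces a chain of $\equiv_\Rr$-related elements from $\xi^0(\lambda)$ to $\xi^0(\lambda')$, whence $\xi^0(\lambda)\equiv_\Rr\xi^0(\lambda')$ by transitivity of $\equiv_\Rr$. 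As $\lambda,\lambda'$ were arbitrary, $\xi^0(\Lambda)$ sits in a single $\equiv_\Rr$-class; this completes the construction of the lift, and uniqueness having already been recorded, the proof is finished.
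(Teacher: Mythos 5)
Your proposal is correct and follows essentially the same route as the paper's proof: the first two assertions are dispatched by inspection via condition \eqref{eq:condition_FIT_Schurian}, uniqueness is forced by the underlying set maps, and the substantive step is exactly the paper's alternating chain argument, with your two inputs (the \textsc{fit} implication $r^0(n)=r^0(n')\Rightarrow n\equiv_\Rr n'$ and the $\equiv_\Gg$-compatibility of $\xi^0$) matching the paper's facts 1) and 2), and the chain itself extracted from the transitive closure description of the push-forward just as in the paper.
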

	\begin{proof}
		Let $\ker(r^0) = (\Nu, \equiv_\Kk)$. Since the sequence $\ker(r^0)\to (\Nu,\equiv_\Rr)\to (\Mu,\Mu\times \Mu)$ is \textsc{fit}, it follows from \eqref{eq:condition_FIT_Schurian} that $\equiv_\Kk$ is the equivalence relation
		\[ a \equiv_\Kk b \iff r^0(a) = r^0(b). \]
		
		It is clear from \eqref{eq:condition_FIT_Schurian} that the lifted sequence $(\Lambda,\tilde{\equiv}_\Nn)\to (\Lambda,\Lambda\times\Lambda)\to (\Mu,\equiv_\Hh)$ is \textsc{fit}. 
		
		We define the map $\tilde{\eta}^0 $ as $\eta^0$, and the map $\tilde{\xi}^0$ as $\xi^0$. This is the only possible choice, since the triangles
		\[\begin{tikzcd}
			{(\Lambda,\tilde{\equiv}_\Nn)} & {(\Nu, \equiv_\Kk)} && {(\Lambda,\Lambda\times\Lambda)} & {(\Nu,\equiv_\Rr)} \\
			{(\Lambda,\equiv_\Nn)} &&& {(\Lambda,\equiv_\Gg)}
			\arrow["{\tilde{\eta}^0}", from=1-1, to=1-2]
			\arrow["{\tilde{\eta}^0}", from=1-4, to=1-5]
			\arrow["{\id_\Lambda}", from=2-1, to=1-1]
			\arrow["{\eta^0}"', from=2-1, to=1-2]
			\arrow["{\id_\Lambda}", from=2-4, to=1-4]
			\arrow["{\eta^0}"', from=2-4, to=1-5]
		\end{tikzcd}\]
		need to commute. The third triangle with $\tilde{\xi}^0$, $r^0$, and $f^0$ commutes automatically, because $r^0\xi^0=f^0$ by assumption. If $\tilde{\eta}^0$ and $\tilde{\xi}^0$ are morphisms, it is clear that the triple $(\tilde{\eta}^0, \tilde{\xi}^0, \id_\Mu)$ is a morphism of short exact sequences.
		
		We first observe that $\tilde{\eta}^0$ is a morphism: indeed,
		\begin{align*}
			a\,\tilde{\equiv}_\Nn b &\implies f^0(a)= f^0(b)\\
			&\implies r^0 \xi^0(a) = r^0\xi^0(b)\\
			&\implies \xi^0(a)\equiv_\Kk \xi^0(b)
		\end{align*}
		by definition of $\equiv_\Kk$.
		
		Proving that $\tilde{\xi}^0$ is a morphism is less immediate (and it corresponds to the part of the proof of \cref{thm:universal_prop_Gtilde} where we proved that $\points{\im(\xi)}$ is a  connected subgroupoid of $\Rr$). Since every two elements $a,b\in\Lambda$ are equivalent under the relation $\Lambda\times\Lambda$, we need to prove that $\xi^0(a)\equiv_\Rr \xi^0(b)$ for all $a,b\in\Lambda$. We begin with the following facts.
		\begin{enumerate} \item[1)] If $a\equiv_\Gg b$, then $\xi^0(a)\equiv_\Rr \xi^0(b)$. This holds because $\xi^0$ is a morphism $(\Lambda, \equiv_\Gg)\to (\Nu,\equiv_\Rr)$.
			\item[2)] If $a\,\tilde{\equiv}_\Nn b$, then $\xi^0(a)\equiv_\Rr \xi^0(b)$. This follows again from $r^0\xi^0 = f^0$, and from the definition of $\tilde{\equiv}_\Nn$:
			\begin{align*}
				a\,\tilde{\equiv}_\Nn b&\iff f^0(a) = f^0(b)\\
				&\implies r^0\xi^0(a) = r^0\xi^0(b)\\
				&\implies \xi^0(a)\equiv_\Kk \xi^0(b)\\
				&\implies \xi^0(a)\equiv_\Rr \xi^0(b),
			\end{align*}
			where the last step follows from the fact that the inclusion $(\Nu,\equiv_\Kk)\to (\Nu, \equiv_\Rr)$ is a morphism.
		\end{enumerate}
		Our strategy is the following: we shall find a sequence $a = a_1, a_2, a_3, \dots, a_n = b$, such that 
		\[ a_1 \equiv_\Gg a_2 \, \tilde{\equiv}_\Nn a_3\equiv_\Gg a_4  \tilde{\equiv}_\Nn \dots \]
		If we succeed in finding such a sequence, then clearly $\xi^0(a)\equiv_\Rr\xi^0(b)$. 
		
		The existence of this sequence comes from the fact that ${}_{f^0}\!\!\equiv_\Gg$ is the coarsest equivalence $\Mu\times\Mu$ (in terms of groupoids: the quiver $\im(f)$ is connected, and hence $\Hh = \points{\im(f)}$). The equivalence ${}_{f^0}\!\!\equiv_\Gg$ is generated by the relation
		\[  x \approx y \iff x = f^0(a), \; y = f^0(b),\; a\equiv_\Gg b, \]
		which is generally not transitive. Thus the equivalence ${}_{f^0}\!\!\equiv_\Gg$ is the transitive closure of $\approx$, and hence $x\, {}_{f^0}\!\!\equiv_\Gg y $ if and only if there exists a sequence $x = x_1, x_2, x_3,\dots, x_n= y$ satisfying
		\begin{equation}\label{eq:sequence_x}
			x_1\approx x_2,\quad x_2 = x_3, \quad x_3\approx x_4,\quad x_4 = x_5,\dots
		\end{equation}
		Using the fact that $f^0$ is surjective, we now write $x= f^0(a)$, $y= f^0(b)$. Thus $f^0(a)\,{}_{f^0}\!\!\equiv_\Gg f^0(b)$ if and only if there exists a sequence $a = a_1, a_2, a_3,\dots, a_n = b$ satisfying 
		\begin{equation}\label{eq:sequence_a} a_1 \equiv_\Gg a_2 \, \tilde{\equiv}_\Nn a_3\equiv_\Gg a_4 \, \tilde{\equiv}_\Nn \dots \end{equation}
		where \eqref{eq:sequence_a} translates \eqref{eq:sequence_x} verbatim. This concludes the proof.
	\end{proof}
	An immediate consequence of \cref{prop:universal_FIT_for_equivalences} is the following.
	\begin{corollary} Let $f^0\colon(\Lambda, \equiv_\Gg)\to (\Mu, \Mu\times\Mu)$ be a morphism and $\tilde{\equiv}_\Nn$ be defined as above. The smallest equivalence relation $\tilde{\equiv}_\Gg$ on $\Lambda$ that makes \[\begin{tikzcd}
			{(\Lambda,\tilde{\equiv}_\Nn)} & {(\Lambda,\tilde{\equiv}_\Gg)} & {(\Mu, \Mu\times\Mu)}
			\arrow["\id_\Lambda", from=1-1, to=1-2]
			\arrow["{f^0}", from=1-2, to=1-3]
		\end{tikzcd}\] into a short exact sequence is forced to be $\tilde{\equiv}_\Gg = \Lambda\times\Lambda$---no matter what $\equiv_\Gg$ is.\end{corollary}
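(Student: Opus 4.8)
This is the set-theoretic shadow of the phenomenon recorded in Remark~\ref{rem:splitting_needed}, and the plan is to prove it by re-running the alternating-chain argument from the proof of Proposition~\ref{prop:universal_FIT_for_equivalences}, this time with \emph{both} kinds of links living inside the single relation $\tilde{\equiv}_\Nn\subseteq\tilde{\equiv}_\Gg$. First I would unwind what it means for the sequence displayed in the statement to be short exact. Since $f^0$ is automatically a morphism onto the coarsest relation $\Mu\times\Mu$, exactness amounts to exactly two conditions on $\tilde{\equiv}_\Gg$: (a) $\tilde{\equiv}_\Nn\subseteq\tilde{\equiv}_\Gg$, which makes $\id_\Lambda$ a morphism and, because $\tilde{\equiv}_\Nn$ is by definition $\{(\lambda,\lambda')\mid f^0(\lambda)=f^0(\lambda')\}$, is precisely what is needed for $(\Lambda,\tilde{\equiv}_\Nn)$ to be the kernel of $f^0\colon(\Lambda,\tilde{\equiv}_\Gg)\to(\Mu,\Mu\times\Mu)$; and (b) ${}_{f^0}\!\!\tilde{\equiv}_\Gg=\Mu\times\Mu$, i.e.\@ the push-forward of $\tilde{\equiv}_\Gg$ along $f^0$ is again the coarsest relation --- equivalently, $\points{\im(f^0)}$ (computed with $\tilde{\equiv}_\Gg$) equals $(\Mu,\Mu\times\Mu)$. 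As usual we may assume $f^0$ surjective, for otherwise no $\tilde{\equiv}_\Gg$ satisfies~(b) unless $|\Mu|\le 1$, a case in which $\Lambda\times\Lambda$ trivially works.

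Next I would fix an arbitrary $\tilde{\equiv}_\Gg$ satisfying (a) and (b) and show $\tilde{\equiv}_\Gg=\Lambda\times\Lambda$. Take $\lambda,\lambda'\in\Lambda$; by (b) we have $f^0(\lambda)\,{}_{f^0}\!\!\tilde{\equiv}_\Gg\,f^0(\lambda')$, and since ${}_{f^0}\!\!\tilde{\equiv}_\Gg$ is the transitive closure of its (already symmetric) generating relation there is a finite sequence $f^0(\lambda)=\mu_1,\mu_2,\dots,\mu_n=f^0(\lambda')$ together with witnesses $a_i,b_i\in\Lambda$ satisfying $a_i\,\tilde{\equiv}_\Gg\,b_i$, $f^0(a_i)=\mu_i$ and $f^0(b_i)=\mu_{i+1}$ for $i=1,\dots,n-1$. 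Now $\lambda$ and $a_1$ lie in the same fibre of $f^0$, hence $\lambda\,\tilde{\equiv}_\Nn\,a_1$ and so $\lambda\,\tilde{\equiv}_\Gg\,a_1$ by (a); similarly $b_i$ and $a_{i+1}$ lie in the same fibre, hence $b_i\,\tilde{\equiv}_\Gg\,a_{i+1}$; and $b_{n-1}$ and $\lambda'$ lie in the same fibre, hence $b_{n-1}\,\tilde{\equiv}_\Gg\,\lambda'$. Concatenating
\[
\lambda\,\tilde{\equiv}_\Gg\,a_1\,\tilde{\equiv}_\Gg\,b_1\,\tilde{\equiv}_\Gg\,a_2\,\tilde{\equiv}_\Gg\,\cdots\,\tilde{\equiv}_\Gg\,b_{n-1}\,\tilde{\equiv}_\Gg\,\lambda'
\]
and invoking transitivity yields $\lambda\,\tilde{\equiv}_\Gg\,\lambda'$; as $\lambda,\lambda'$ were arbitrary, $\tilde{\equiv}_\Gg=\Lambda\times\Lambda$.

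Finally I would observe that $\Lambda\times\Lambda$ itself satisfies (a) and (b): this is exactly the assertion of Proposition~\ref{prop:universal_FIT_for_equivalences} that $(\Lambda,\tilde{\equiv}_\Nn)\to(\Lambda,\Lambda\times\Lambda)\to(\Mu,\Mu\times\Mu)$ is a \textsc{fit}, in particular short exact, sequence. Hence the collection of admissible $\tilde{\equiv}_\Gg$ is the singleton $\{\Lambda\times\Lambda\}$: its minimum exists and equals $\Lambda\times\Lambda$, and --- the argument having nowhere referred to $\equiv_\Gg$ --- this is independent of the equivalence relation we started from. The only delicate step is the bookkeeping at the start, namely checking that ``short exact'' reduces exactly to (a) and (b) and imposes nothing further; once this is settled the proof is the familiar zig-zag through fibres of $f^0$ and links of $\tilde{\equiv}_\Gg$, and there is no further obstacle.
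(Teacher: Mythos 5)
Your proof is correct, but it follows a genuinely different route from the paper's. The paper argues via the universal property: it checks that the identities $(\id_\Lambda,\id_\Lambda,\id_\Mu)$ form a morphism of short exact sequences from $(\Lambda,\equiv_\Nn)\to(\Lambda,\equiv_\Gg)\to(\Mu,\Mu\times\Mu)$ into the candidate sequence and then invokes Proposition \ref{prop:universal_FIT_for_equivalences} to force a factorisation through $(\Lambda,\Lambda\times\Lambda)$, whence $\Lambda\times\Lambda\subseteq\tilde{\equiv}_\Gg$. You instead reduce exactness to the two conditions $\tilde{\equiv}_\Nn\subseteq\tilde{\equiv}_\Gg$ and ${}_{f^0}\!\!\tilde{\equiv}_\Gg=\Mu\times\Mu$ (this bookkeeping is right: the kernel of $f^0$ on $(\Lambda,\tilde{\equiv}_\Gg)$ is $\tilde{\equiv}_\Gg\cap\tilde{\equiv}_\Nn$, so the kernel condition is exactly the inclusion) and then re-run the fibre/link zig-zag that is the combinatorial engine inside the proof of Proposition \ref{prop:universal_FIT_for_equivalences}. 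What your version buys is self-containedness, and in fact it sidesteps the least transparent step of the paper's argument: the displayed chain there opens with $a\equiv_\Gg b\implies f^0(a)=f^0(b)$, which fails for a general morphism onto the coarse relation (e.g.\ $\Lambda=\{1,2,3\}$, $f^0(1)=x$, $f^0(2)=f^0(3)=y$, with $\equiv_\Gg$ relating $1$ and $2$), so the inclusion $\equiv_\Gg\subseteq\tilde{\equiv}_\Gg$ that the paper uses is only evident a posteriori; your direct argument never needs it. Two cosmetic points for a final write-up: note explicitly that the degenerate case $f^0(\lambda)=f^0(\lambda')$ (empty witness chain) is already handled by $\tilde{\equiv}_\Nn\subseteq\tilde{\equiv}_\Gg$, and that the corollary sits inside the standing hypotheses of Proposition \ref{prop:universal_FIT_for_equivalences}, so the surjectivity of $f^0$ is assumed rather than something to be re-derived.
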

	\begin{proof} If $\tilde{\equiv}_\Gg$ is such an equivalence relation, one has the implications
		\[ a\equiv_\Gg b \implies f^0(a) = f^0(b)\implies a\,\tilde{\equiv}_\Nn b \implies a\,\tilde{\equiv}_\Gg b, \]
		which means that $\id_\Lambda \colon (\Lambda,\equiv_\Gg)\to (\Lambda, \tilde{\equiv}_\Gg)$ is a morphism. Thus the identities $(\id_\Lambda,\id_\Lambda, \id_\Mu)$ assemble to a morphism of short exact sequences
		\[\begin{tikzcd}
			{(\Lambda,\tilde{\equiv}_\Nn)} & {(\Lambda, \tilde{\equiv}_\Gg)} & {(\Mu, \Mu\times \Mu)} \\
			{(\Lambda,\equiv_\Nn)} & {(\Lambda, \equiv_\Gg)} & {(\Mu, \Mu\times \Mu)}
			\arrow[from=1-1, to=1-2]
			\arrow["{f^0}", from=1-2, to=1-3]
			\arrow["{\id_\Lambda}", from=2-1, to=1-1]
			\arrow[from=2-1, to=2-2]
			\arrow["{\id_\Lambda}", from=2-2, to=1-2]
			\arrow["{f^0}", from=2-2, to=2-3]
			\arrow[equals, from=2-3, to=1-3]
		\end{tikzcd}\]
		which satisfies the hypotheses of \cref{prop:universal_FIT_for_equivalences}, and hence factors through the sequence $(\Lambda,\tilde{\equiv}_\Nn) \to (\Lambda, \Lambda\times\Lambda)\to (\Mu, \Mu\times \Mu)$. This means in particular that $\tilde{\equiv}_\Gg$ contains $\Lambda\times \Lambda$, whence $\tilde{\equiv}_\Gg = \Lambda\times\Lambda$.
	\end{proof}
	\begin{example}
		The situation in \cref{fig:vker_1} is an example of the lifted First Isomorphism Theorem in $\GpdSchur$. Here $\Lambda = \{\lambda,\mu,\lambda',\mu'\}$, the equivalence relation $\equiv_\Gg$ is given by  $\{\lambda\equiv_\Gg \mu,\, \lambda'\equiv_\Gg\mu'\}$, and the equivalence relation $\tilde{\equiv}_\Nn$ is $\{\mu \,\tilde{\equiv}_\Nn \mu'\}$. Since the image of $f$ is connected, one has 
		\[ \lambda \equiv_\Gg \mu \,\tilde{\equiv}_\Nn \mu' \equiv_\Gg \lambda', \]
		thus the minimum equivalence relation on $\Lambda$ that contains both $\,\tilde{\equiv}_\Nn$ and $\equiv_\Gg$ is clearly $\Lambda\times\Lambda$.
	\end{example}
	\begin{remark}
		In the category $\Gpd$, the case of groups and the case of Schurian groupoids are, in some sense, two opposite extrema. The universal split \textsc{fit} sequence $\tilde{\mathbf{S}}$ collapses, in these two extrema, to two different universal objects. Namely, to:
		\begin{enumerate}
			\item a universal split sequence, in the case of groups;
			\item a universal \textsc{fit} sequence, in the case of Schurian groupoids.
		\end{enumerate}
		The fact that $\tilde{\mathbf{S}}$ is universal split for groups is very much expected, because \textit{every short exact sequence of groups is already \textsc{fit}}. 
		
		The fact that $\tilde{\mathbf{S}}$ is universal \textsc{fit} for Schurian groupoids is much more surprising, because \textit{not every short exact sequence of Schurian groupoids splits}. A counterexample is the sequence in \cref{fig:vker_1}, where $\Hh$ is not a subgroupoid of $\Gg$. Therefore, \cref{prop:universal_FIT_for_Shurian} does not follow as an immediate consequence of \cref{thm:universal_prop_Gtilde}.
	\end{remark}
	\subsection{Universal lifting of split short exact sequences} We now consider again the category $\Gpd$ of all groupoids, not necessarily Schurian. \begin{lemma}In the construction of \cref{thm:construction_Gtilde}, suppose that the epimorphism $f$ already has a splitting $s$. Then $\tilde{s}$ can be defined in a way that the canonical inclusion $\mathbf{S}\to \tilde{\mathbf{S}}$ is a morphism in $\SES_\Hh^{\mathrm{split}}(\Gpd)$; i.e., the composition $\tilde{s}f$ equals the canonical morphism $\Gg\to \tilde{\Gg}$.\end{lemma}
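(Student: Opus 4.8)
The plan is to observe that essentially no construction is needed: the required splitting is forced to be $\iota\circ s$, where $\iota\colon\Gg\to\tilde{\Gg}$ is the canonical inclusion furnished by Lemma \ref{lem:morphi_SES}. Recall from that lemma that the canonical morphism $\mathbf{S}\to\tilde{\mathbf{S}}$ in $\SES_\Hh(\Gpd)$ has middle component $\iota$ and kernel component the restriction of $\iota$ to $\Nn$, and that $\tilde f\,\iota=f$. For the triple $(\iota|_\Nn,\iota,\id_\Hh)$ to be a morphism in $\SES_\Hh^{\mathrm{split}}(\Gpd)$ the only additional requirement is that it intertwine the splitting $s$ of $f$ with the splitting $\tilde s$ of $\tilde f$, i.e.\ that $\iota\circ s=\tilde s$. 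So I would simply \emph{define} $\tilde s:=\iota\circ s$.

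I would then need only check that $\iota\circ s$ is genuinely a splitting of $\tilde f$, so that the construction of $\tilde{\mathbf{S}}$ goes through verbatim with $\iota\circ s$ in the role of $\tilde s$. This is the one-line computation
\[ \tilde f\,\tilde s=\tilde f\,\iota\, s=f\, s=\id_\Hh, \]
using $\tilde f\iota=f$ from Lemma \ref{lem:morphi_SES} and $fs=\id_\Hh$ from the hypothesis that $s$ splits $f$. On vertices $\tilde s^\Set=\iota^\Set s^\Set=s^\Set$ is a set-theoretic section of $\tilde f^\Set=f^\Set$, exactly as the proof of Theorem \ref{thm:construction_Gtilde}(1) allows; parts (1) and (2) of that theorem hold for $\tilde{\mathbf{S}}$ regardless of which section of $\tilde f$ is chosen. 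With $\tilde s$ so defined, $\iota\circ s=\tilde s$ holds tautologically, which is precisely the condition making the canonical morphism $\mathbf{S}\to\tilde{\mathbf{S}}$ a morphism of $\SES_\Hh^{\mathrm{split}}(\Gpd)$.

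I do not anticipate a genuine obstacle; the content is exhausted by the displayed identity. The one point worth a sentence is bookkeeping about ``group parts''. Since $\Hh$ is connected, $s(\Hh)$ lies in a single connected component $\Gg_{i_0}$ of $\Gg$, and the group part of $\tilde s=\iota\circ s$ lands in the free factor $G_{i_0}$ of $\tilde G=\bigast_{i\in I\cup\{0\}}G_i$, rather than in the adjoined factor $G_0=H$ through which the ``default'' section $\iota_0^\Gp$ of Theorem \ref{thm:construction_Gtilde}(1) goes. This is immaterial: one never needs to pass through the presentation of $\tilde G$, since the displayed identity already certifies at the level of groupoids --- a fortiori at the level of isotropy groups --- that $\iota\circ s$ sections $\tilde f$. (If one does want the group-level picture, note in addition that $f$ restricted to $\Gg_{i_0}$ is already epic, its image containing $f\circ s(\Hh)=\Hh$.)
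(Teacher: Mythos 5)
Your proof is correct and is essentially the paper's own argument: the paper likewise takes $\tilde{s}$ to be $\iota\circ s$, merely expressing it through the $\Gp\times\Set$ decomposition (after adjusting the chosen base vertices so that $s^\Gp$ becomes a homomorphism $H\to G_{\bar{\imath}}$), and the verification reduces to the same identity $\tilde f\,\iota\, s = f s = \id_\Hh$. Your observations that the splitting is forced to be $\iota\circ s$ and that parts (1)--(2) of Theorem \ref{thm:construction_Gtilde} are insensitive to the choice of section correctly capture what the statement requires.
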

	\begin{proof}
		Since $\Hh$ is connected, the image of $s$ is entirely contained in a connected component $\Gg_{\bar{\imath}}$ for an index $\bar{\imath} \in I$. The construction from \cref{thm:construction_Gtilde} requires that we choose arbitrary vertices $\lambda_i\in\Lambda_i$, and a vertex $\mu\in\Mu$ among the set of vertices $\mu_i = f^0(\lambda_i)$. Up to modifying this choices, then, we may assume that $s^0 (\mu) = \lambda_{\bar{\imath}}$. 
		
		For $\tilde{s}$, then, we define $\tilde{s}^\Set$ as the set-theoretic section $s^\Set$; and $\tilde{s}^\Gp$ as the homomorphism $H\to \tilde{G}$ obtained by $s^\Gp\colon H\to G_{\bar{\imath}}$ composed with the canonical map $G_{\bar{\imath}}\to \tilde{G}$. This is clearly a section.
	\end{proof}
	\begin{proposition}\label{prop:universal_FIT_for_split}
		The sequence $\tilde{\mathbf{S}}$ and the canonical inclusion $\mathbf{S}\to \tilde{\mathbf{S}}$ of split short exact sequences are universal, in the following sense: if $\mathbf{R}$ is a split \textsc{fit} sequence, and $\mathbf{S}\to \mathbf{R}$ is a morphism of split sequences, then there is a unique morphism of split sequences $\tilde{\mathbf{S}}\to \mathbf{R}$ that makes the obvious triangle commute.
		
		In other words, the functor $\mathbf{S}\mapsto \tilde{\mathbf{S}}$ from $\SES_\Hh^{\mathrm{split}}(\Gpd)$ to $\SES_\Hh^{\text{\upshape split,\textsc{fit}}}(\Gpd)$ admits the inclusion $\SES_\Hh^{\text{\upshape split,\textsc{fit}}}(\Gpd)\to \SES_\Hh^{\mathrm{split}}(\Gpd)$ as a right adjoint.
	\end{proposition}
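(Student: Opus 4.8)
My plan is to establish the stated universal property directly; the adjunction then follows by the usual characterisation of a left adjoint by universal arrows, the unit being the canonical inclusion $\mathbf{S}\to\tilde{\mathbf{S}}$, which is a morphism of split sequences by the lemma immediately preceding this statement. So fix a split \textsc{fit} sequence $\mathbf{R}=(\Kk\to\Rr\xrightarrow{r}\Hh)$ with splitting $t\colon\Hh\to\Rr$, and a morphism of split sequences $(\eta,\xi,\id_\Hh)\colon\mathbf{S}\to\mathbf{R}$, so that $r\xi=f$ and $\xi s=t$, where $s\colon\Hh\to\Gg$ is the splitting of $\mathbf{S}$. The comparison map $(\tilde\eta,\tilde\xi,\id_\Hh)\colon\tilde{\mathbf{S}}\to\mathbf{R}$ is built, modulo bookkeeping, exactly as in the proof of Theorem~\ref{thm:universal_prop_Gtilde}: the extra hypothesis ``$sf=\xi$'' used there is the only ingredient to be replaced, and it is replaced by the assumption that $\mathbf{S}$ itself splits. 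Concretely, I would decompose $\Gg=\bigsqcup_{i\in I}\Gg_i$ with $\Gg_i\cong G_i\times\widehat{\Lambda_i}$; since $\im(s)$ is connected it lies in a single component $\Gg_{\bar\imath}$, and I would fix the base vertices so that $\lambda_{\bar\imath}=s^0(\mu)$. By the preceding lemma $\varphi_{\bar\imath}\colon G_{\bar\imath}\to H$ is then a \emph{split} epimorphism, so the auxiliary free factor $G_0=H$ may be dropped: I would realise $\tilde{\mathbf{S}}$ with $\tilde G=\bigast_{i\in I}G_i$ and splitting $\tilde s^\Gp=\iota_{\bar\imath}^\Gp\circ s^\Gp$; the sequence stays \textsc{fit} because $\tilde f^\Gp$ remains surjective, stays split by this $\tilde s$, and $\mathbf{S}\to\tilde{\mathbf{S}}$ stays a morphism of split sequences. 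This slim realisation is what later makes uniqueness automatic.

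The crucial step — and the main obstacle — is to show that $\points{\im(\xi)}$ is a \emph{connected} subgroupoid of $\Rr$. This goes exactly as in Theorem~\ref{thm:universal_prop_Gtilde}: from $f=r\xi$, the fact that $\mathbf{R}$ is \textsc{fit} (so that $r^0(a)=r^0(b)$ forces $a,b$ into the same connected component of the wide subgroupoid $\Kk$, hence into one joined by an arrow of $\Kk\subseteq\Rr$), and the connectedness of $\points{\im(f)}=\Hh$, one links any two vertices of $\Lambda$ by a finite zigzag alternating arrows of $\Gg$ and coincidences of $f^0$-values, and transports it through $\xi$. Once $\points{\im(\xi)}$ is connected, I would take the coarse subgroupoid $t(\widehat{\Mu})$ of $\Rr$, set $\nu=t^0(\mu)$ and $R=\Rr_\nu$, pass to the conjugation isomorphisms, and obtain homomorphisms $\psi_i\colon G_i\to R$ with $r^\Gp\psi_i=\varphi_i$. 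These induce $\tilde\xi^\Gp\colon\tilde G\to R$ by the universal property of the free product, with $r^\Gp\tilde\xi^\Gp=\tilde f^\Gp$ by Remark~\ref{lem:univ_groups}. Put $\tilde\xi^\Set=\xi^\Set$ (forced by the commuting triangle, as $\iota^\Set=\id_\Lambda$), so that $r\tilde\xi=\tilde f$, and let $\tilde\eta$ be the restriction of $\tilde\xi$ to $\ker(\tilde f)$, whose image lies in $\Kk$.

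It remains to check that $(\tilde\eta,\tilde\xi,\id_\Hh)$ is a morphism of \emph{split} sequences and is unique. For the first,
\[\tilde\xi\,\tilde s\;=\;\tilde\xi\,\iota\,s\;=\;\xi\,s\;=\;t,\]
the first equality being $\iota s=\tilde s$ (preceding lemma), the second the commuting triangle $\tilde\xi\iota=\xi$, and the third that $\xi$ intertwines splittings — this is precisely the place where Theorem~\ref{thm:universal_prop_Gtilde} invoked ``$sf=\xi$'', now supplied instead by $\mathbf{S}$ being split. Uniqueness is immediate with the slim realisation: the triangle forces $\tilde\xi^\Set=\xi^\Set$ and forces $\tilde\xi^\Gp$ on each generating factor $G_i$ ($i\in I$) to equal $\psi_i$, hence pins down $\tilde\xi$ on all of $\tilde G=\bigast_{i\in I}G_i$, and $\tilde\eta$ is then forced; had the $G_0=H$ factor been kept, there would instead be a free choice of a section of $r^\Gp$, which is why dropping it matters. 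Feeding this universal arrow into the universal-arrow characterisation of adjoints produces the functor $\mathbf{S}\mapsto\tilde{\mathbf{S}}$ on $\SES^{\mathrm{split}}_\Hh(\Gpd)$ together with the stated adjunction. Apart from the connectedness of $\points{\im(\xi)}$, the only delicate matter is this coherent choice of base data, entirely analogous to the Schurian case treated in Proposition~\ref{prop:universal_FIT_for_Shurian}.
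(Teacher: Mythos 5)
Your argument is correct in substance and follows the route the paper intends---reuse the machinery of Theorem \ref{thm:universal_prop_Gtilde} after arranging the splitting of $\tilde{\mathbf{S}}$---but it departs from the paper in one genuine way: you drop the free factor $G_0=H$ and realise $\tilde G$ as $\bigast_{i\in I}G_i$ with the section routed through $G_{\bar\imath}$, whereas the paper keeps $G_0$, only redefines $\tilde s$ (as in the lemma preceding the proposition), and then declares the rest ``a consequence of Theorem \ref{thm:universal_prop_Gtilde}''. Your version is more careful at exactly the two points where the paper is terse. First, the hypothesis $sf=\xi$ of Theorem \ref{thm:universal_prop_Gtilde} is not available here, and you correctly locate its only use (the final verification $\tilde\xi\tilde s=s$) and replace it by the chain $\tilde\xi\tilde s=\tilde\xi\iota s=\xi s=t$ supplied by $\mathbf{S}\to\mathbf{R}$ being a morphism of split sequences. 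Second, your observation that keeping $G_0$ while rerouting $\tilde s$ through $G_{\bar\imath}$ leaves $\tilde\xi^{\Gp}|_{G_0}$ constrained only to be \emph{some} section of $r^{\Gp}$ is sharp: it shows uniqueness is genuinely delicate for the paper's realisation, and your slim realisation (legitimate because splitness of $\mathbf{S}$ makes $\varphi_{\bar\imath}$, hence $\tilde f^{\Gp}$, a split surjection) repairs this. The price is that your $\tilde{\mathbf{S}}$ is not isomorphic to the one of Theorem \ref{thm:construction_Gtilde} (compare $\bigast_{i}G_i$ with $H*\bigast_{i}G_i$), so you are proving the universal property, and hence the adjunction, for a different representative of the left adjoint than the object the proposition's wording literally points to; you should say explicitly that in the split setting you re-choose the model of $\tilde{\mathbf{S}}$, which is harmless only because universal objects are determined up to isomorphism. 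Finally, one imprecision you inherit from the paper rather than introduce: pinning down $\tilde\xi^{\Gp}$ and $\tilde\xi^{\Set}$ does not by itself determine a morphism out of $\tilde G\times\widehat\Lambda$, since one must also control the images of the chosen Schurian arrows joining distinct components $\Lambda_i$, which need not lie in $\points{\im(\iota)}$; a fully rigorous write-up should spend a line on why these are forced by $\tilde\xi\iota=\xi$ together with $\tilde\xi\tilde s=t$ and $r\tilde\xi=\tilde f$.
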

	\begin{proof}
		Once we know that the functor $\mathbf{S}\mapsto \tilde{\mathbf{S}}$ from \cref{thm:construction_Gtilde} can be constructed in a way that respects the splittings, the rest of the proposition is a consequence of \cref{thm:universal_prop_Gtilde}.
	\end{proof}

	\section{On a classical notion of semidirect product}\label{sec:semidirect} 
	The term `semidirect product' applied to groupoids is no novelty in the mathematical literature. The product `group by coarse groupoid' in \cref{prop:product_groupd_coarse_groupoid} is an example. Some notions of product for groupoids are mentioned in the entire oeuvre of R.\@ Brown (see e.g.\@ \cite{BrownBookGroupoids}), more recently semidirect products have been discussed in Ibort and Marmo \cite{IbortMarmoGroupoids}, Metere and Montoli \cite{MetereMontoliSemidirectIntGpd}, and in many others places. Moreover, Zappa--Szép products are introduced in \cite{AguiarAndruskiewitsch}, and extensively discussed e.g.\@ in \cite{duwenig2024zappa}; and a notion of \textit{semidirect product of categories}, seemingly unrelated to our research, is defined in Steinberg \cite{SteinbergSemidirectCategories}.
	
	Semidirect products of groupoids in the sense of Brown do not provide a good Split Lemma in general. This issue constitutes the premise to our investigation, and the rationale for introducing \textit{crossed products} in \cref{subsec:crossed_Gpd}. 
	
	In this section, however, we make sense of these classical semidirect products, proving a Split-like Lemma that is not the categorial Split Lemma in the sense of Bourn and Janelidze \cite{BournJanelidzeProtomodularityDescSemProd}, but it is perhaps just as useful in the applications.
	\subsection{Groupoid (semistrong) actions on a quiver}
	In the rest of this paper, we shall mostly need what we call `semistrong' actions in $\Gpd_\Lambda$ (see \cite[\S1.4]{andruskiewitsch2005quiver}). However, for completeness, we first give the fundamentals of a theory of actions in $\Gpd$, where the set of vertices is not fixed.
	
	The following definition distances itself from the notion of \textit{action of a groupoid on a groupoid} which is given in \cite[Definition 1.1]{BrownGroupoidsAsCoefficients}. However, we adopt it here for three reasons. First, because it is the definition we need to work with. Second, because it appears as a more straightforward oidification of the notion of \textit{action of a group on a set}. Third, because it is compatible with the definition of \textit{action of a group $G$ on an object $X$} in a generic locally small category $\Cc$, which is defined as a group homomorphism $G\to \Aut_\Cc(X)$. Observe moreover that our definition is not new, since we are building on the notion of \textit{action of a category on a category} given by Tilson \cite{TilsonCatAsAlg}.
	
	Consider a category $\Cc\in \{\Quiv,\Quiv_\Lambda,\Gpd,\Gpd_\Lambda\}$, and an object $Q$ in $\Cc$. We denote by  $\Aut_{\Cc}(Q)$ the group of automorphism of $Q$ in $\Cc$; by $\End_{\Cc}(Q)$ the monoid of endomorphisms; and by $\FF_\Cc(Q)\subseteq \End_\Cc(Q)$ the submonoid of fully faithful endomorphisms, i.e.\@ the endomorphisms that are bijective on the arrows. Observe that $\FF_{\Cc}(Q) =\Aut_{\Cc}(Q)$ if $\Cc \in \{\Quiv_\Lambda,\Gpd_\Lambda\}$, because every strong endomorphism $f$ over $\Lambda$ has $f^0 = \id_\Lambda$.
	
	\begin{definition}
		Let $\Gg$ be a groupoid, and $Q$ a quiver. A \textit{left action} of $\Gg$ on $Q$ is a functor $\Gg\to \FF_\Quiv(Q)$, where the monoid $\FF_\Quiv(Q)$ is regarded as a category with a single object. Likewise, a \textit{right action} is a functor $\Gg^{op}\to \FF_\Quiv(Q)$, where $\Gg^{op}$ is the quiver with reversed arrows ($\source_{\Gg^{op}} =\target_\Gg$, $\target_{\Gg^{op}}= \source_\Gg$) and opposite groupoid operation.
	\end{definition}
	As we shall see, in the above definition $\FF_\Quiv(Q)$ can be replaced with $\Aut_\Quiv(Q)$ whenever $Q^0 = \im(\source)\cup \im(\target)$. 
	
	Let $\vartheta\colon \Gg\to \FF_\Quiv(Q)$ be a left action. We also denote an action by symbols such as $\triangleright$, and use the (slightly abusive) notation $g\triangleright x$ for $\vartheta^1_g(x)$ and $g\triangleright \lambda$ for $\vartheta^0_g(\lambda)$, where $g\in \Gg^1$, $x\in Q^1$, and $\lambda\in Q^0$.
	
	\begin{lemma}\label{lem:weak_action_properties}
		In the above setting, one has:
		\begin{enumerate}
			\item $gh\triangleright x = g\triangleright (h\triangleright x)$ for all $g\ot h\in \Gg\ot \Gg$, $x\in Q$;
			\item $1_\lambda\triangleright x = x$ for all $x\in Q^1$, $\lambda\in\Gg^0$;
			\item the inverse map of $\vartheta^1_g$ is $\vartheta^1_{g^{-1}}$ for all $g\in \Gg^1$;
			\item $\vartheta^0_g$ is bijective from $\im(\source_Q)\cup \im(\target_Q)$ to $\im(\source_Q)\cup \im(\target_Q)$, for all $g\in \Gg^1$. In particular, if $Q^0 = \im(\source_Q)\cup \im(\target_Q)$, then the image of $\vartheta$ is actually contained in $\Aut_\Quiv(Q)\subseteq \FF_\Quiv(Q)$.
		\end{enumerate}
	\end{lemma}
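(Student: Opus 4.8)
The plan is to get all four items out of two structural facts: that $\vartheta=(\vartheta^0,\vartheta^1)$ is a functor, hence preserves composition and identities, and that each $\vartheta_g$ lies in $\FF_\Quiv(Q)$, i.e.\@ is a morphism of quivers with $\vartheta^1_g$ bijective. Items (1) and (2) are then just these functoriality axioms written out concretely. For (1), preservation of composition says $\vartheta^1_{gh}=\vartheta^1_g\circ\vartheta^1_h$ whenever $g\ot h$ is a consecutive pair (so that $gh$ is defined), which unwinds to $(gh)\triangleright x=g\triangleright(h\triangleright x)$; the same identity on the vertex components $\vartheta^0$ covers $x\in Q^0$. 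For (2), I would note that the unique object of $\FF_\Quiv(Q)$ has identity $\id_Q$, whose arrow component is $\id_{Q^1}$, and that $\vartheta$ sends each $1_\lambda$ to this identity, so $\vartheta^1_{1_\lambda}=\id_{Q^1}$, that is, $1_\lambda\triangleright x=x$.

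For (3), I would apply (1), and its vertex analogue, to the consecutive pairs $g\ot g^{-1}$ and $g^{-1}\ot g$. Since $gg^{-1}=1_{\source(g)}$ and $g^{-1}g=1_{\target(g)}$, item (2) turns these into $\vartheta^1_g\circ\vartheta^1_{g^{-1}}=\id_{Q^1}=\vartheta^1_{g^{-1}}\circ\vartheta^1_g$, whence $\vartheta^1_{g^{-1}}=(\vartheta^1_g)^{-1}$; the identical argument on vertex components gives $\vartheta^0_{g^{-1}}=(\vartheta^0_g)^{-1}$, so that, as a byproduct, each $\vartheta^0_g$ is already a bijection of $Q^0$.

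For (4), I would use that $\vartheta_g$ is a morphism of quivers, so $\source_Q\circ\vartheta^1_g=\vartheta^0_g\circ\source_Q$ and $\target_Q\circ\vartheta^1_g=\vartheta^0_g\circ\target_Q$. This gives the inclusions $\vartheta^0_g(\im(\source_Q))\subseteq\im(\source_Q)$ and $\vartheta^0_g(\im(\target_Q))\subseteq\im(\target_Q)$, so $\vartheta^0_g$ maps $S:=\im(\source_Q)\cup\im(\target_Q)$ into itself; applying the same inclusions to $g^{-1}$ and invoking $\vartheta^0_{g^{-1}}=(\vartheta^0_g)^{-1}$ from (3) upgrades both inclusions to equalities, so $\vartheta^0_g$ restricts to a bijection of $S$. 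The ``in particular'' clause is then immediate: if $Q^0=S$ then $\vartheta^0_g$ is a bijection of $Q^0$, and combined with $\vartheta^1_g$ bijective this makes $\vartheta_g$ an automorphism of $Q$; in fact step (3) shows this already holds even without the hypothesis $Q^0=S$.

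I do not expect a real obstacle here: the whole lemma is bookkeeping around the functor axioms. The one spot deserving attention is the order convention, since the paper composes arrows of $\Gg$ diagrammatically but functions in Leibniz order; one has to make sure the composite produced in (1) is $\vartheta^1_g\circ\vartheta^1_h$ and not its opposite, which is exactly what the formulation of (1) pins down. The only genuine use of the groupoid structure (as opposed to mere functoriality) is the inversion in step (3), and that is in turn what promotes the inclusions in (4) to equalities.
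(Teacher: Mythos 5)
Your proof is valid if one reads ``functor'' in the strict sense (preserving identity arrows), and items (1) and (3) coincide with the paper's argument. The divergence is in item (2), and it is not innocent. You derive $1_\lambda\triangleright x=x$ from the axiom that $\vartheta$ sends the identity arrow $1_\lambda$ to the identity $\id_Q$ of the one-object category $\FF_\Quiv(Q)$; as you yourself observe, this also forces $\vartheta^0_{1_\lambda}=\id_{Q^0}$, hence $\vartheta^0_{g^{-1}}=(\vartheta^0_g)^{-1}$ on all of $Q^0$, so that every $\vartheta_g$ would already lie in $\Aut_\Quiv(Q)$. That conclusion makes item (4) and its ``in particular'' clause vacuous, and it contradicts the sentence right after the definition, which says $\FF_\Quiv(Q)$ can be replaced by $\Aut_\Quiv(Q)$ \emph{whenever} $Q^0=\im(\source)\cup\im(\target)$ --- i.e.\@ not in general. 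The paper's intent is evidently that only composition-preservation is to be used, and its proof of (2) is written accordingly: from $1_\lambda\triangleright(1_\lambda\triangleright x)=1_\lambda 1_\lambda\triangleright x=1_\lambda\triangleright x$ one cancels $\vartheta^1_{1_\lambda}$, which is legitimate because elements of $\FF_\Quiv(Q)$ are bijective \emph{on arrows}. This controls $\vartheta^1_{1_\lambda}$ but says nothing about $\vartheta^0_{1_\lambda}$ on vertices outside $\im(\source_Q)\cup\im(\target_Q)$ (e.g.\@ isolated vertices can be moved around), which is exactly why (4) is stated only on that subset.

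Under this weaker reading, the parts of your argument that lean on the vertex component of (3) need repair: you cannot invoke $\vartheta^0_{g^{-1}}=(\vartheta^0_g)^{-1}$ on $Q^0$ to upgrade the inclusions in (4) to equalities. The paper's fix is to route everything through the source/target intertwining together with the arrow-level inverse from (3): for $x\in Q^1$,
\begin{equation*}
\vartheta^0_{g^{-1}}\vartheta^0_g\,\target_Q(x)=\vartheta^0_{g^{-1}}\,\target_Q\,\vartheta^1_g(x)=\target_Q\,\vartheta^1_{g^{-1}}\vartheta^1_g(x)=\target_Q(x),
\end{equation*}
and symmetrically for $\vartheta^0_g\vartheta^0_{g^{-1}}$ and for $\source_Q$. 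This gives bijectivity of $\vartheta^0_g$ precisely on $\im(\source_Q)\cup\im(\target_Q)$ and nothing more. Your first inclusion step in (4) (that $\vartheta^0_g$ maps this set into itself) is fine and matches the paper; it is only the inverse step that should be replaced by the computation above. In short: either state explicitly that you are assuming identity-preservation, in which case the lemma you prove is strictly stronger and item (4) collapses, or adopt the paper's cancellation argument for (2) and its source/target argument for (4).
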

	\begin{proof} It is the same as for group actions, with some more technicalities.
		\begin{enumerate}
			\item Since $\vartheta$ is a functor, one has $\vartheta^1_g\vartheta^1_h = \vartheta^1_{gh}$ for all $g\ot h\in \Gg\ot \Gg$.
			\item One has $1_\lambda \triangleright (1_\lambda\triangleright x) = 1_\lambda1_\lambda\triangleright x = 1_\lambda\triangleright x$, whence $1_\lambda\triangleright x = x$ because $\vartheta^1_{1_\lambda}$ is invertible.
			\item Immediate computation, using the previous two points.
			\item If $\lambda$ is the source, resp.\@ the target of $x$, then $g\triangleright \lambda$ is the source, resp.\@ the target of $g\triangleright x$. Thus $\vartheta^0_g$ restricts indeed to an endomap of $\im(\source_Q)\cup \im(\target_Q)$ for all $g$. Since $\vartheta^1_{g^{-1}}$ is the inverse of $\vartheta^1_g$, for all $x\in Q^1$ one has
			\begin{align*}
				\vartheta^0_{g^{-1}}\vartheta^0_g \target(x)&= 	\vartheta^0_{g^{-1}}\target \vartheta^1_g(x)\\
				&= \target \vartheta^1_{g^{-1}}\vartheta^1_g(x)\\
				&= \target (x),
			\end{align*}
			and one similarly proves $\vartheta^0_g\vartheta^0_{g^{-1}} \target(x) = \target(x)$. With the same proof, one shows that $\vartheta^0_{g^{-1}}$ is the inverse of $\vartheta^0_g$ on $\im(\source_Q)$.\qedhere
		\end{enumerate}
	\end{proof}	
	We now give a version of actions over a fixed set of vertices. 
	
	In $\Quiv_\Lambda$, we define a \textit{strong action} of $\Gg$ on $Q$ as a groupoid morphism $\Gg\to \Aut_{\Quiv_\Lambda}(Q)$. However, this notion appears immediately to be very restrictive. In particular, if $Q$ is Schurian, then every $g\in \Gg$ acts as the identity. Most remarkably, the left multiplication $\Gg\ot \Gg\to \Gg$ is not a left strong action of $\Gg$ on $\Gg$. Thus we shall employ the following weaker, but much more useful definition. The term \textit{semistrong action} is our own, and we use it to distinguish it from more general actions; but the definition appeared in Andruskiewitsch \cite{andruskiewitsch2005quiver} and several other places.
	\begin{definition}[{\cite[\S1.4]{andruskiewitsch2005quiver}}]
		Let $\Gg$ be a groupoid over $\Lambda$, and $Q$ a quiver over $\Lambda$. 
		
		A \textit{left semistrong action} of $\Gg$ on $Q$ is a morphism $\triangleright\colon \Gg\ot Q\to Q$, satisfying $gh\triangleright x = g\triangleright (h\triangleright x)$ and $1_{\source(x)}\triangleright x = x$ for all $g\ot h\ot x\in \Gg\ot \Gg\ot Q$; and such that the action $g\triangleright\blank$ on the vertices is a bijection $\Lambda\to \Lambda$ and sends $\target(g)$ to $\source(g)$. A \textit{right semistrong action} $\triangleleft\colon Q\ot \Gg\to Q$ is defined analogously, with the action on the vertices satisfying $\source(g)\triangleleft g = \target(g)$.
	\end{definition}
	\begin{remark}
		The multiplication $\Gg\ot \Gg\to \Gg$ on a groupoid $\Gg$ yields a left and a right semistrong action (but not a strong action) of $\Gg$ on itself.
	\end{remark}
	We shall use the terms (\textit{semistrong}) \textit{left} or \textit{right $\Gg$-module}, and (\textit{semi\-strong}) \textit{left} or \textit{right $\Gg$-module algebra}, carrying the obvious meaning. We say that a quiver $Q$ is a (\textit{semi\-strong}) $\Gg$-\textit{bimodule} with respect to (semistrong) actions $\triangleright, \triangleleft$, left and right respectively, if the usual bimodule compatibility $(g\triangleright x)\triangleleft h = h\triangleright (x\triangleleft h)$ holds. 
	\begin{remark}
		As it was pointed out in \cite[\S 1.4]{andruskiewitsch2005quiver}, there is an equivalence between semistrong actions of $\Gg$ on $Q$, and strong morphisms of groupoids $\Gg\to \mathfrak{aut}(Q)$, where the groupoid $\mathfrak{aut}(Q)$ over $\Lambda$ is defined as
		\[
		\big(\mathfrak{aut}(Q)\big)^1= \{ (\lambda, f,\mu) \mid \lambda,\mu\in \Lambda\text{ and }f\colon Q(\mu,\Lambda)\to Q(\lambda,\Lambda)\text{ is a bijection} \},\]
		\[ \source(\lambda,f,\mu)=\lambda,\quad \target(\lambda,f,\mu) = \mu,
		\quad  (\lambda,f,\mu)\cdot (\mu,g,\nu) = (\lambda, f\circ g, \nu).\]
		Indeed, if $x\in \Gg^1$ is an arrow $\lambda\to \mu$, a semistrong action $\triangleright$ induces a bijection \[x\triangleright \blank\colon Q(\mu,\Lambda)\to Q(\lambda,\Lambda),\] and the action is entirely characterised by this family of bijections. 
		
		Observe that $\mathfrak{aut}(Q)$ is, in some sense, more natural than $\Aut_{\Quiv_\Lambda}(Q)$: because it is a groupoid and not a group, and because it allows us to encode the semistrong actions of groupoids on $Q$.
	\end{remark}
	
	\subsection{Semidirect products of groupoids} Let $\Aa$ and $\Bb$ be groupoids, and $\vartheta\colon \Bb\to \Aut_\Quiv(\Aa)$ be a left action of $\Bb$ on the quiver $\Aa$. With a slight abuse, we also use the symbol $\triangleright$ for both the actions $\vartheta^1, \vartheta^0$. Suppose moreover that $b\triangleright a_1 a_2 = (b\triangleright a_1)(b\triangleright a_2)$ whenever the left-hand side is defined---in other words, $\Aa$ is a $\Bb$-module algebra. 
	\begin{definition}
		The semidirect product $\Aa \lfibre{}{\triangleright} \Bb$ is the quiver
		\[ (\Aa^1\times \Bb^1)\twomapsright{\Source}{\Target}(\Aa^0\times \Bb^0), \]
		where $\Source(a\times b) = \source(a)\times \source(b)$ and $\Target(a\times b) = (b^{-1}\triangleright \target(a))\times \target(b)$, endowed with the multiplication
		\begin{align*} m\colon & (\Aa \lfibre{}{\triangleright} \Bb)\fibre{\Target}{\Source}(\Aa \lfibre{}{\triangleright} \Bb)\to \Aa \lfibre{}{\triangleright} \Bb \\
			& (a_1\times b_1)\cdot (a_2\times b_2) = a_1(b_1\triangleright a_2)\times (b_1b_2) \end{align*}
		and family of units $1_{\lambda\times \mu} = 1_\lambda\times 1_\mu$.
	\end{definition}
	
	\begin{lemma}
		With the above operations, and the inverses
		\[ (a\times b)^{-1} = (b^{-1}\triangleright a^{-1})\times b^{-1},  \]
		the semidirect product becomes a groupoid.
	\end{lemma}
	\begin{proof}
		First of all, we observe that 
		\begin{align*}
			& \Source((a_1\times b_1)\cdot (a_2\times b_2)) = \Source(a_1\times b_1),\\
			& \Target((a_1\times b_1)\cdot (a_2\times b_2))= \Target(a_2\times b_2).
		\end{align*}
		The first relation is obvious, while for the second relation one has
		\begin{align*} \Target (a_1(b_1\triangleright a_2)\times (b_1b_2)) &= \big(b_2^{-1}b_1^{-1}\triangleright \target(b_1\triangleright a_2)\big)\times \target(b_2)\\ & = \big(b_2^{-1}b_1^{-1}\triangleright (b_1\triangleright \target(a_2))\big)\times \target(b_2)\\ &= (b_2^{-1}\triangleright \target(a_2)) \times \target(b_2) \\ & = \Target(a_2\times b_2).\end{align*}
		Then, we check that $(a\times b)^{-1}$ has indeed source in $\Target(a\times b)$, and target in $\Source(a\times b)$:
		\begin{align*}
			\Source((b^{-1}\triangleright a^{-1})\times b^{-1}) &= (b^{-1}\triangleright \source(a^{-1}))\times \source(b^{-1}) = (b^{-1}\triangleright \target(a))\times \target(b)=\Target(a\times b),\\
			\Target((b^{-1}\triangleright a^{-1})\times b^{-1}) &= (b\triangleright \target(b^{-1}\triangleright a^{-1}))\times \target(b^{-1}) = \target(a^{-1})\times \target(b^{-1}) = \Source(a\times b).
		\end{align*}
		All the other verifications are routine, and they are the same as for the semidirect product of groups.
	\end{proof}
	\begin{remark}\label{rem:semidirectgroupbygroupoids}
		Let $G$ be a group, i.e.\@ a groupoid over a singleton $\{\bullet\}$. Let $\triangleright $ be an action of a groupoid $\Hh$ on the quiver $G$, such that $G$ is an $\Hh$-module algebra. Since $G^0 = \{\bullet\}$, the action $\triangleright$ is strong, and hence $\Gg = G\lfibre{}{\triangleright}\Hh$ has same source and target as the quiver $G\times \Bb$. This is exactly the classically known product `group by groupoid' that appears in \cite{BrownBookGroupoids}. Observe that $\Hh$, here, is not necessarily coarse. Semidirect products `group by groupoid' with $\Hh$ non necessarily coarse are used e.g.\@ in \cite{brown2002discontinuousaction}. The set $\Gg^0 = \{\bullet\}\times \Hh^0$ can be identified with $\Hh^0$, and the isotropy group $\Gg_\lambda$ is the group $G\rtimes_\triangleright \Hh_\lambda$.
	\end{remark}
	\begin{remark}
		The group bundle over the set of vertices $\Lambda$, with all isotropy groups isomorphic to $G$, is obtained as $G\lfibre{}{\triangleright} \One_\Lambda$. Here the action $\triangleright$ of $\One_\Lambda$ on $G$ is forced to be trivial, since $1_\lambda\triangleright g = g$ for all $g\in G$, $\lambda\in\Lambda$.
	\end{remark}
	For the case described in \cref{rem:semidirectgroupbygroupoids}, the following result is known (see Metere and Montoli \cite{MetereMontoliSemidirectIntGpd}, and later Ibort and Marmo \cite{IbortMarmoGroupoids})
	\begin{proposition}\label{prop:MetereMontoli}
		Let $\Nn\to \Gg\to \Hh$ be a split \textsc{fit} sequence in $\Gpd_\Lambda$, and hence $\Nn$ be a group bundle $N\rtimes \One_\Lambda$ for some group $N$. Then, one has $\Gg \cong N\rtimes_\triangleright \Hh$, where the action $\triangleright$ is induced by the conjugation in $\Gg$.
	\end{proposition}
	In \cref{sec:crossed}, we shall generalise the same result to a Split Lemma in $\Gpd$, from the viewpoint of crossed products. No such Split Lemma in $\Gpd$ exists in the language of these classical semidirect products: this is what we demonstrate by means of examples and counterexamples.
	\begin{example}
		\label{ex:9Schurian}
		Let $\Gg = \widehat{9}$, $\Nn = \widehat{\{0,3,6 \}}\sqcup\widehat{\{1,4,7 \}}\sqcup \widehat{\{2,5,8 \}}$ and $\Hh = \widehat{3}$, as in \cref{fig:crossed9}. Consider the splitting \textsc{fit} sequence $\Nn\to \Gg\to \Hh$ where $\Hh$ is identified with the quotient $\Gg/\Nn$. Clearly, it is not possible to describe $\Gg$ as a semidirect product $\Nn\lfibre{}{}\Hh$ in any possible way, since $\Gg^0$ has $9$ elements, while $|\Nn^0\times \Hh^0| = 9\cdot 3 = 27$.
		
		Let $\Kk = \widehat{\{ 0,3,6 \}}$ be a connected component of $\Nn$. We can hope to describe $\Gg$ as a semidirect product of $\Kk$ and $\Hh$, since now both the number of vertices ($3\cdot 3 = 9$) and of arrows ($3^2\cdot 3^2 = 9^2$) are the right ones. 
		
		In this special case, we are lucky. Indeed, every vertex $n = 0,\dots, 8$ can be written uniquely as $n = 3k + r$ for $r = 0,1,2$; and the map $+\colon \Kk^0\times \Hh^0\to \Gg^0$ is a bijection. It is easy, then, to describe $\Gg$ as a (semi)direct product $\Kk\times \Hh$.
		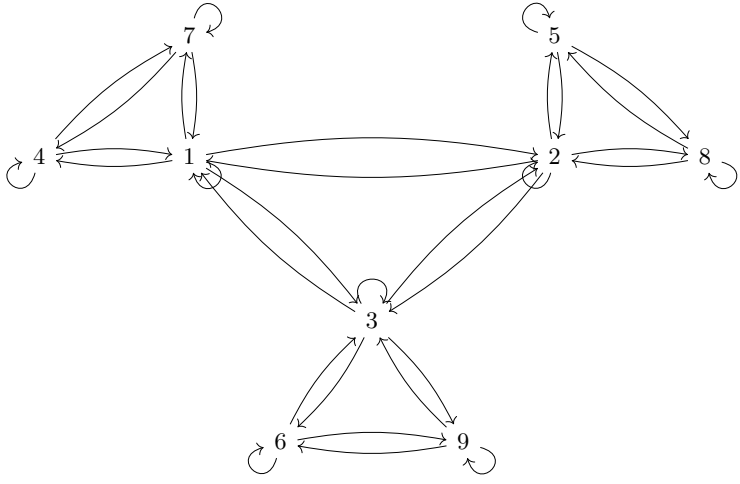
\begin{figure}[t]
			\[\begin{tikzcd}
				&& 6 &&&& 4 \\
				\\
				3 && 0 &&&& 1 && 7 \\
				\\
				\\
				&&&& 2 \\
				\\
				&&& 5 && 8
				\arrow[from=1-3, to=1-3, loop, in=15, out=75, distance=5mm]
				\arrow[bend left=10, from=1-3, to=3-1]
				\arrow[bend left=10, from=1-3, to=3-3]
				\arrow[from=1-7, to=1-7, loop, in=105, out=165, distance=5mm]
				\arrow[bend left=10, from=1-7, to=3-7]
				\arrow[bend left=10, from=1-7, to=3-9]
				\arrow[bend left=10, from=3-1, to=1-3]
				\arrow[from=3-1, to=3-1, loop, in=195, out=255, distance=5mm]
				\arrow[bend left=10, from=3-1, to=3-3]
				\arrow[bend left=10, from=3-3, to=1-3]
				\arrow[bend left=10, from=3-3, to=3-1]
				\arrow[from=3-3, to=3-3, loop, in=285, out=345, distance=5mm]
				\arrow[bend left=10, from=3-3, to=3-7]
				\arrow[bend left=10, from=3-3, to=6-5]
				\arrow[bend left=10, from=3-7, to=1-7]
				\arrow[bend left=10, from=3-7, to=3-3]
				\arrow[from=3-7, to=3-7, loop, in=195, out=255, distance=5mm]
				\arrow[bend left=10, from=3-7, to=3-9]
				\arrow[bend left=10, from=3-7, to=6-5]
				\arrow[bend left=10, from=3-9, to=1-7]
				\arrow[bend left=10, from=3-9, to=3-7]
				\arrow[from=3-9, to=3-9, loop, in=285, out=345, distance=5mm]
				\arrow[bend left=10, from=6-5, to=3-3]
				\arrow[bend left=10, from=6-5, to=3-7]
				\arrow[from=6-5, to=6-5, loop, in=60, out=120, distance=5mm]
				\arrow[bend left=10, from=6-5, to=8-4]
				\arrow[bend left=10, from=6-5, to=8-6]
				\arrow[bend left=10, from=8-4, to=6-5]
				\arrow[from=8-4, to=8-4, loop, in=195, out=255, distance=5mm]
				\arrow[bend left=10, from=8-4, to=8-6]
				\arrow[bend left=10, from=8-6, to=6-5]
				\arrow[bend left=10, from=8-6, to=8-4]
				\arrow[from=8-6, to=8-6, loop, in=285, out=345, distance=5mm]
			\end{tikzcd}\]		\caption{Pictorial representation of the Schurian groupoids $\Nn = \widehat{\{0,3,6 \}}\sqcup\widehat{\{1,4,7 \}}\sqcup \widehat{\{2,5,8 \}}$ and $\Hh = \widehat{3}$. }\label{fig:crossed9}
		\end{figure}
	\end{example}
	Two special circumstances occurred in the previous example: $\Gg$ was Schurian,\footnote{This allowed us to immediately conclude that $\Gg\cong \Kk\times \Hh$ by just looking at the sets of vertices. However, this hypothesis can probably be bypassed.} and $\Nn$ was a union of isomorphic connected components. This need not be true in general.
	\begin{example}
		Let $\Gg = \widehat{7}$ and $\Hh=\widehat{2}$. Even though there is a copy of $\Hh$ inside $\Gg$, and there are multiple ways to obtain a split epimorphism $f\colon \Gg\to \Hh$, there is no way to let all the components of $\ker(f)$ have the same number of vertices. Consider for instance $\Nn = \ker(f)$ and $\Hh$ as in \cref{fig:uneven}. Clearly, $\Gg \ncong \Kk\lfibre{}{} \Hh$ for all connected components $\Kk$ of $\Nn$.
		\begin{figure}[t]
			\[\begin{tikzcd}
				6 &&& 2 & 3 \\
				5 & 0 && 1 & 4
				\arrow[from=1-1, to=1-1, loop, in=60, out=120, distance=5mm]
				\arrow[bend left=10, from=1-1, to=2-1]
				\arrow[bend left=10, from=1-1, to=2-2]
				\arrow[from=1-4, to=1-4, loop, in=105, out=165, distance=5mm]
				\arrow[bend left=10, from=1-4, to=1-5]
				\arrow[bend left=10, from=1-4, to=2-4]
				\arrow[bend left=10, from=1-4, to=2-5]
				\arrow[bend left=10, from=1-5, to=1-4]
				\arrow[from=1-5, to=1-5, loop, in=15, out=75, distance=5mm]
				\arrow[bend left=10, from=1-5, to=2-4]
				\arrow[bend left=10, from=1-5, to=2-5]
				\arrow[bend left=10, from=2-1, to=1-1]
				\arrow[from=2-1, to=2-1, loop, in=195, out=255, distance=5mm]
				\arrow[bend left=10, from=2-1, to=2-2]
				\arrow[bend left=10, from=2-2, to=1-1]
				\arrow[bend left=10, from=2-2, to=2-1]
				\arrow[from=2-2, to=2-2, loop, in=240, out=300, distance=5mm]
				\arrow[bend left=10, from=2-2, to=2-4]
				\arrow[bend left=10, from=2-4, to=1-4]
				\arrow[bend left=10, from=2-4, to=1-5]
				\arrow[bend left=10, from=2-4, to=2-2]
				\arrow[from=2-4, to=2-4, loop, in=240, out=300, distance=5mm]
				\arrow[bend left=10, from=2-4, to=2-5]
				\arrow[bend left=10, from=2-5, to=1-4]
				\arrow[bend left=10, from=2-5, to=1-5]
				\arrow[bend left=10, from=2-5, to=2-4]
				\arrow[from=2-5, to=2-5, loop, in=285, out=345, distance=5mm]
			\end{tikzcd}\]
			\caption{The groupoids $\Hh = \widehat{2}$ and $\Nn= \widehat{\set{0,5,6}}\sqcup \widehat{\set{1,2,3,4}}$ inside $\Gg = \widehat{7}$.}\label{fig:uneven}
		\end{figure}
	\end{example}
	\section{Crossed products of groupoids and Split Lemma}\label{sec:crossed}
	In this section we give a notion of \textit{crossed products} in $\Gpd$, which is the right one to retrieve a (lifted) Split Lemma for groupoids---and hence the categorial semidirect product in the sense of Bourn and Janelidze \cite{BournJanelidzeProtomodularityDescSemProd}. In the category $\Gpd_\Lambda$ of groupoids over a fixed set of vertices, the notion will simplify, and it will become equivalent to a semidirect product in case one of the two groupoids is a group bundle. 
	
	The crucial idea is that quotients of groupoids are naturally bilateral: thus, in a splitting \textsc{fit} sequence $\Nn\to \Gg\to \Hh$ (where $\Hh$ is identified with a subgroupoid of $\Gg$), one generally has $\Gg = \Nn\Hh\Nn$ and not, as for groups, $\Gg = \Nn\Hh$. Thus if we want to retrieve a result akin to the Split Lemma, our 
	crossed products need to be based on the tensor product $\Nn\ot\Hh\ot\Nn$, rather than $\Nn\ot\Hh$. The underlying quiver is not exactly $\Nn\ot\Hh\ot\Nn$, though: we need to take a quotient, namely the \textit{balanced tensor product} $\Nn\otb\Hh\otb\Nn$. This is indispensable, if we want to comprise 
	semidirect products of groups as an instance of our notion. For $\Nn$ and $\Hh$ groups, one will naturally have $\Nn\otb \Hh\otb \Nn\cong \Nn\ot \Hh$ as quivers.
	\subsection{Crossed product in $\maybebm{\Gpd}$}\label{subsec:crossed_Gpd} Let $\Nn$ and $\Hh$ be groupoids, such that $\Hh^0\subseteq \Nn^0$, and every connected component of $\Nn$ contains exactly one vertex of $\Hh$. We shall discuss in \cref{subsec:Radford_Gpd} how these conditions arise naturally. For simplicity of notation, we denote by symbols such as $h,k,l$ the arrows in $\Hh$, by $a,b,c$ (or similar symbols) the arrows in $\Nn$, and by $\underline{a}, \underline{b}, \underline{c}$ (or similar symbols) the arrows in $\underline{\Nn} = \Nn^\circlearrowright(\Hh^0,\Hh^0)$. 
	
	Let $\triangleright$ be a left semistrong action of $\Hh$ on $\underline{\Nn}$. Define $\underline{a} \triangleleft h =h^{-1}\triangleright \underline{a}$, and observe that this is a right semistrong action of $\Hh$ on $\underline{\Nn}$, and that $\triangleright,\triangleleft$ satisfy the bimodule compatibility. Suppose moreover that $\underline{\Nn}$ is an $\Hh$-bimodule algebra, i.e.\@ that $h\triangleright (\underline{a}\, \underline{b}) = (h\triangleright \underline{a}) (h\triangleright \underline{b})$, which also implies $(\underline{a}\, \underline{b})\triangleleft h = (\underline{a}\triangleleft h)(\underline{b}\triangleleft h)$.
	
	With a slight abuse, since $\Hh^0$ is contained in $\Nn^0$, we make sense of the tensor products $\Nn\ot \Hh$ and $\Hh\ot \Nn$ in the obvious way. We define the \textit{balanced tensor product} $\Nn\otb \Hh\otb \Nn$ with respect to the $\Hh$-bimodule structure of $\underline{\Nn}$, as the quotient
	\[  (\Nn\ot\Hh\ot\Nn)/ (\sim,=), \]
	where $\sim$ is the equivalence relation generated by
	\[	 a\underline{b}\ot h\ot \underline{c} d\sim a\underline{b}(h\triangleright \underline{c} )\ot h \ot d\sim a \ot h \ot (\underline{b}\triangleleft h)\underline{c} d .\]
	Any two equivalent triples have same source and same target, thus $(\sim, =)$ is an equivalence pair.
	
	\begin{definition}
		The \textit{crossed product} $\Nn\rcross{}{\triangleright}\Hh\lcross{\triangleleft}{}\Nn$ is defined as the quiver $\Nn\otb \Hh\otb \Nn$, with multiplication
		\begin{align*} (a_1\otb h_1\otb b_1)(a_2\otb h_2\otb b_2) &= a_1 (h_1\triangleright b_1a_2)\otb h_1h_2\otb b_2\\
			&= a_1\otb h_1h_2 \otb (b_1a_2\triangleleft h_2)b_2, \end{align*}
		and units
		\[ 1_\lambda=  a\otb 1_\mu\otb a^{-1}  \]
		where $\mu$ is the unique vertex of $\Hh$ in the same connected component of $\Nn$ as $\lambda$, and $a$ is any arrow in $\Nn(\lambda,\mu)$. We simply write $\Nn\rcross{}{}\Hh\lcross{}{}\Nn$ when the actions are understood.
	\end{definition}
	Observe that $b_1a_2$ is in fact an arrow of $\underline{\Nn}$, because it is a loop on $\target(h_1)= \source(h_2)$. The verification that $ a_1 (h_1\triangleright b_1a_2)\otb h_1h_2\otb b_2$ and $a_1\otb h_1h_2 \otb (b_1a_2\triangleleft h_2)b_2$ are equal is an immediate computation:
	\begin{align*}
		a_1 (h_1\triangleright b_1a_2)\otb h_1h_2\otb b_2&  = a_1\otb h_1h_2 \otb ((h_1\triangleright b_1a_2)\triangleleft h_1h_2)b_2\\
		&= a_1\otb h_1h_2 \otb \big((h_1\triangleright b_1a_2\triangleleft h_1)\triangleleft h_2\big)b_2\\
		&= a_1\otb h_1h_2 \otb (b_1a_2\triangleleft h_2)b_2.
	\end{align*}
	\begin{proposition}\label{prop:crossed_is_gpd}
		The crossed product $\Nn\rcross{}{}\Hh\lcross{}{}\Nn$ is in fact a groupoid. The quiver $\Nn\otb \One\otb \Nn$ is a normal subgroupoid, isomorphic with $\Nn$.
	\end{proposition}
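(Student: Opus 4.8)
The plan is to verify the groupoid axioms for $\Nn\rcross{}{}\Hh\lcross{}{}\Nn$ directly on representatives $a\otb h\otb b$, treating the algebra exactly as for semidirect products of groups; the only genuinely new point is the bookkeeping of sources and targets across the three tensor slots. First I would record that $\source(a\otb h\otb b)=\source(a)$ and $\target(a\otb h\otb b)=\target(b)$, so that $(a_1\otb h_1\otb b_1)$ and $(a_2\otb h_2\otb b_2)$ are consecutive precisely when $\target(b_1)=\source(a_2)$. When this holds, the vertices $\target(h_1)=\source(b_1)$ and $\source(h_2)=\target(a_2)$ both belong to $\Hh^0$ and lie in the same connected component of $\Nn$ (they are joined by the arrow $b_1a_2$); by the standing hypothesis that each component of $\Nn$ contains exactly one vertex of $\Hh$, they coincide, whence $h_1\ot h_2$ is defined and $b_1a_2\in\underline{\Nn}$. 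This is exactly what makes the defining formula meaningful and shows that the multiplication is a \emph{strong} morphism from the honest tensor product $(\Nn\otb\Hh\otb\Nn)\ot(\Nn\otb\Hh\otb\Nn)$, not merely from a twisted fibre product. That the formula descends to the balanced tensor product in each variable is a short computation using that $\underline{\Nn}$ is an $\Hh$-bimodule algebra and $\underline{a}\triangleleft h=h^{-1}\triangleright\underline{a}$, with $1_\mu\triangleright\underline{a}=\underline{a}$ for loops $\underline{a}$ at $\mu$; the equivalence of the two displayed expressions for the product is the computation already performed after the definition. I would also check that $1_\lambda=a\otb 1_\mu\otb a^{-1}$ is independent of the choice of $a\in\Nn(\lambda,\mu)$: if $a'=ad$ with $d$ a loop at $\mu$, then $a'\otb 1_\mu\otb a'^{-1}=ad\otb 1_\mu\otb d^{-1}a^{-1}\sim a\otb 1_\mu\otb (d\triangleleft 1_\mu)d^{-1}a^{-1}=a\otb 1_\mu\otb a^{-1}$.

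Next I would treat associativity, units and inverses, which are routine once the composability bookkeeping is in place. Associativity follows by expanding both bracketings: one gives $a_1(h_1\triangleright b_1a_2)(h_1h_2\triangleright b_2a_3)\otb h_1h_2h_3\otb b_3$, the other gives $a_1\bigl(h_1\triangleright(b_1a_2(h_2\triangleright b_2a_3))\bigr)\otb h_1h_2h_3\otb b_3$, and the two agree via $h_1\triangleright(\underline{x}\,\underline{y})=(h_1\triangleright\underline{x})(h_1\triangleright\underline{y})$ together with the left action rule $h_1\triangleright(h_2\triangleright\underline{z})=h_1h_2\triangleright\underline{z}$. For the unit law I would compute $(a\otb 1_\mu\otb a^{-1})(a_2\otb h_2\otb b_2)=a(a^{-1}a_2)\otb h_2\otb b_2=a_2\otb h_2\otb b_2$ whenever $\source(a_2)=\source(a)$ — here again the hypothesis on $\Hh$-vertices forces $\source(h_2)=\mu$, so $a^{-1}a_2\in\underline{\Nn}$ and $1_\mu$ acts trivially on it — and symmetrically on the right. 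For inverses I would propose $(a\otb h\otb b)^{-1}=b^{-1}\otb h^{-1}\otb a^{-1}$ and verify $(a\otb h\otb b)(b^{-1}\otb h^{-1}\otb a^{-1})=a(h\triangleright 1_{\target(h)})\otb 1_{\source(h)}\otb a^{-1}=a\otb 1_{\source(h)}\otb a^{-1}=1_{\source(a)}$, and likewise $(b^{-1}\otb h^{-1}\otb a^{-1})(a\otb h\otb b)=1_{\target(b)}$. This shows $\Nn\rcross{}{}\Hh\lcross{}{}\Nn$ is a groupoid.

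For the second assertion I would realise $\Nn\otb\One\otb\Nn$ as a kernel. Setting $\rho^0(\lambda)$ to be the unique vertex of $\Hh$ in the component of $\lambda$ and $\rho^1(a\otb h\otb b)=h$ gives a well defined assignment on the balanced tensor product (the middle slot is untouched by $\sim$), and by the product formula $\rho\colon\Nn\rcross{}{}\Hh\lcross{}{}\Nn\to\Hh$ is a morphism of groupoids; its kernel is exactly the subquiver of arrows whose middle term is a unit of $\Hh$, i.e.\ $\Nn\otb\One\otb\Nn$, which is therefore a wide normal subgroupoid. Finally, I would exhibit the strong morphism $\psi\colon\Nn\otb\One\otb\Nn\to\Nn$ with $\psi^0=\id$ and $\psi(a\otb 1_\mu\otb b)=ab$: it is well defined on $\otb$ because $1_\mu$ acts trivially on the loops at $\mu$ that $\sim$ moves across the middle slot; it is a functor since $\psi\bigl((a_1\otb 1_\mu\otb b_1)(a_2\otb 1_\mu\otb b_2)\bigr)=\psi(a_1b_1a_2\otb 1_\mu\otb b_2)=(a_1b_1)(a_2b_2)$; it is surjective since $\psi(a\otb 1_\mu\otb a^{-1}b)=b$ for a suitable $a$; and it is injective because $a_1b_1=a_2b_2$ with $\source(a_1)=\source(a_2)$ forces $a_2=a_1d$, $b_1=db_2$ for the loop $d=a_1^{-1}a_2\in\underline{\Nn}$, so that $a_2\otb 1_\mu\otb b_2\sim a_1\otb 1_\mu\otb db_2=a_1\otb 1_\mu\otb b_1$. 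Since a bijective morphism of groupoids is an isomorphism, $\Nn\otb\One\otb\Nn\cong\Nn$.

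The main obstacle I anticipate is not any individual computation — each is analogous to the group case — but keeping the source/target bookkeeping consistent across the three tensor slots, and in particular being precise about \emph{where} the hypothesis ``each component of $\Nn$ meets $\Hh^0$ in exactly one vertex'' is used: it is what makes consecutive arrows of the crossed product have composable $\Hh$-parts (hence $m$ defined on the genuine tensor product rather than a twisted fibre product), and it is what makes the unit and inverse formulas, as well as the map $\rho$, land in the right place.
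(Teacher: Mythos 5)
Your proof is correct, and for the first assertion (the groupoid axioms: well-definedness of the multiplication on the balanced tensor product, associativity via the bimodule algebra condition, independence of the units from the choice of $a\in\Nn(\lambda,\mu)$, and the inverse formula $(a\otb h\otb b)^{-1}=b^{-1}\otb h^{-1}\otb a^{-1}$) it follows essentially the same computations as the paper; your explicit bookkeeping of why consecutive arrows of the crossed product automatically have composable $\Hh$-parts is exactly the observation the paper records immediately after the definition.

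Where you genuinely diverge is the second assertion. The paper proves normality of $\Nn\otb\One\otb\Nn$ by direct verification: closure under products, units, inverses, and an explicit computation showing that the conjugate $(c\otb h\otb d)(a\otb 1\otb b)(c^{-1}\otb h^{-1}\otb d^{-1})$ again has middle slot $1$. You instead exhibit the projection $\rho\colon\Nn\rcross{}{}\Hh\lcross{}{}\Nn\to\Hh$, $a\otb h\otb b\mapsto h$ (well defined because $\sim$ never alters the middle slot), check it is a functor, and identify $\Nn\otb\One\otb\Nn$ as $\ker(\rho)$, so normality is automatic from the paper's earlier remark that kernels of groupoid morphisms are normal. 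This is cleaner and also wins you wideness for free. For the isomorphism with $\Nn$, the paper writes down $\varphi\colon a\otb 1\otb b\mapsto ab$ and a two-sided inverse $a\mapsto ab\otb 1\otb b^{-1}$, whereas you verify injectivity and surjectivity of the same map directly; both are complete, and your injectivity computation (factoring $a_2=a_1d$ with $d=a_1^{-1}a_2$ a loop at $\mu$ and sliding $d$ across the middle slot) is precisely the content the paper delegates to the independence check in its equation for the units. Either route is fine; yours packages the normality statement more conceptually at no extra cost.
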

	\begin{proof}
		We first check that the multiplication is well-defined. One has
		\begin{align*}
			(a_1\underline{b}_1\otb h_1\otb \underline{c}_1d_1) (a_2\underline{b}_2\otb h_2\otb \underline{c}_2d_2) &= a_1\underline{b}_1(h_1\triangleright \underline{c}_1d_1a_2\underline{b}_2)\otb h_1h_2\otb \underline{c}_2d_2\\
			\overset{(\dagger)}&{=} a_1b_1(h_1\triangleright \underline{c}_1)(h_1\triangleright d_1a_2\underline{b}_2)\otb h_1h_2\otb \underline{c}_2d_2\\
			&= (a_1\underline{b}_1 (h_1\triangleright \underline{c}_1)\otb h_1\otb d_1) (a_2\underline{b}_2\otb h_2\otb \underline{c}_2d_2),
		\end{align*}
		where the step marked with $(\dagger)$ follows from the module algebra condition, and the fact that both $\underline{c}_1$ and $d_1a_2\underline{b}_2$ are loops over $\target(h_1)$. One also has
		\begin{align*}
			(a_1\underline{b}_1\otb h_1\otb \underline{c}_1d_1) (a_2\underline{b}_2\otb h_2\otb \underline{c}_2d_2) &= a_1\underline{b}_1(h_1\triangleright \underline{c}_1d_1a_2\underline{b}_2)\otb h_1h_2\otb \underline{c}_2d_2\\
			&= a_1\underline{b}_1 (h_1\triangleright \underline{c}_1d_1a_2\underline{b}_2)(h_1h_2\triangleright \underline{c}_2)\otb h_1h_2\otb d_2\\
			\overset{(\ddagger)}&{=} a_1\underline{b}_1\big( h_1\triangleright \underline{c}_1d_1a_2\underline{b}_2 (h_2\triangleright \underline{c}_2) \big)\otb h_1h_2\otb d_2\\
			&=  (a_1\underline{b}_1\otb h_1\otb \underline{c}_1d_1) (a_2\underline{b}_2 (h_2\triangleright \underline{c}_2)\otb h_2\otb d_2),
		\end{align*}
		where the step marked with $(\ddagger)$ follows again from the module algebra condition, together with the fact that both $\underline{c}_1d_1a_2\underline{b}_2$ and $h_2\triangleright \underline{c}_2$ are loops over $\target(h_1)$. The good definition of the multiplication with respect to the relations involving $\triangleleft$ are proven symmetrically, using the alternative form of the multiplication that involves $\triangleleft$.
		
		
		We now check that this multiplication provides a groupoid structure. As for the associativity, one has
		\begin{align*}
			&(a_1\otb h_1\otb b_1)\Big( (a_2\otb h_2\otb b_2)(a_3\otb h_3\otb b_3) \Big)\\
			&= (a_1\otb h_1\otb b_1) \big( a_2(h_2\triangleright b_2a_3)\otb h_2h_3\otb b_3 \big)\\
			&= a_1\Big(h_1\triangleright \big( b_1a_2(h_2\triangleright b_2a_3) \big)\Big)\otb h_1h_2h_3\otb b_3\\
			\overset{(\lozenge)}&{=} a_1(h_1\triangleright b_1a_2)(h_1h_2\triangleright b_2a_3)\otb h_1h_2h_3\otb b_3\\
			&= \Big(a_1(h_1\triangleright b_1a_2)\otb h_1h_2\otb b_2\Big) (a_3\otb h_3\otb b_3),
		\end{align*}
		where the step marked with $(\lozenge)$ follows from the module algebra condition, and from both $b_1a_2$ and $h_2\triangleright b_2a_3$ being loops on $\source(h_2)=\target(h_1)$.
		
		The units $a\otb 1\otb a^{-1}$ are well-defined: indeed, if $a$ and $b$ are two arrows in $\Nn$ connecting the vertex $\lambda\in\Nn^0$ with the unique $\mu\in \Hh^0$ that lies in the same connected component of $\Nn$ as $\lambda$, then $b^{-1}a$ and $a^{-1}b$ are loops on $\mu$, on which $1_\mu \in \Hh^1$ acts trivially; and hence
		\begin{equation}\label{eq:verif_does_not_dep_on_a}		a\otb 1\otb a^{-1} = bb^{-1}a\otb 1\otb a^{-1}bb^{-1} = b\otb 1\otb (b^{-1}a\triangleleft 1_\mu)a^{-1}bb^{-1} = b\otb 1\otb b^{-1},  \end{equation}
		as desired. One immediately verifies that $(a\otb 1\otb a^{-1}) (b\otb h \otb c) = b\otb h \otb c$ whenever $\source(b) = \source(a)$; and similarly on the other side. The inverse of $a\otb h\otb b$ is simply $b^{-1}\otb h^{-1}\otb a^{-1}$: indeed, the module algebra condition $h\triangleright (\underline{a}\, \underline{b}) = (h\triangleright \underline{a}) (h\triangleright \underline{b})$ implies $h\triangleright 1_{\target(h)} = 1_{\source(h)}$, and hence
		\[ (a\otb h\otb b)(a^{-1}\otb h^{-1}\otb b^{-1}) = a(h\triangleright bb^{-1})\otb hh^{-1}\otb a^{-1} = a\otb 1\otb a^{-1},  \]
		which is the unit on $\source(a\otb h\otb b)$; and similarly on the other side, using the description of the multiplication via $\triangleleft$.
		
		We finally observe that $\Nn\otb \One\otb \Nn$ is a normal subgroupoid: it is closed under multiplication, because
		\[
		(a\otb 1\otb b)(c\otb 1\otb d) = abc\otb 1\otb d, \]
		it is clearly closed under units and inverses, and the conjugation
		\[ (c\otb h \otb d)(a\otb 1\otb b) (c^{-1}\otb h^{-1} \otb d^{-1}) = c(h\triangleright da)(h\triangleright c^{-1})\otb 1\otb d^{-1} \]
		lies in $(\Nn\otb \One \otb \Nn)^1$ whenever it is well-defined. The map $\varphi\colon a\otb 1\otb b\mapsto ab$ is well-defined because $\target(a)=\source(b)$, and it is a strong morphism of groupoids $\Nn\otb \One\otb\Nn\to \Nn$. If $a$ is an arrow in $\Nn$, choose any arrow $b$ from $\target(a)$ to the unique vertex of $\Hh^0$ that lies in the same connected component as $a$, and define the strong morphism of groupoids $\psi\colon a\mapsto ab\otb 1_{\target(b)}\otb b^{-1}$: this does not depend on the choice of $b$ (the verification is essentially the same as \eqref{eq:verif_does_not_dep_on_a}), and it is clearly the inverse of $\varphi$.
	\end{proof}
	The form of the inverses, in this crossed product, looks much simpler than in the usual semidirect product of groups. However, in the case of groups, these two expressions are actually equivalent: we shall prove it in \cref{subsec:crossed_GpdLambda}.
	\subsection{Lifted Split Lemma in $\maybebm{\Gpd}$}\label{subsec:Radford_Gpd} Consider a split epimorphism of groupoids
	\[\begin{tikzcd}
		\Nn & \Gg & \Hh,
		\arrow["\iota", from=1-1, to=1-2]
		\arrow["\pi", shift left, from=1-2, to=1-3]
		\arrow["s", shift left, from=1-3, to=1-2]
	\end{tikzcd}\]
	with moreover $\Hh= \Gg/\Nn$. We identify $\Nn$ with $\im(\iota)$, and $\Hh$ with $\im(s)$.
	
	One has $\Gg = \Nn\Hh\Nn$. By definition of the quotient $\Gg/\Nn$, every $x\in \Gg$ can be written as $a h b$ for $a,b\in \Nn$, $h\in \Hh$. The triple $a\ot h\ot b$ is not unique, however it becomes unique modulo a suitable equivalence relation on the arrows.
	\begin{lemma}\label{lemma:NHNgenerates} The triple $a\ot h\ot b$, defined above, is unique modulo the equivalence relation $\sim$ generated by
		\[ a\underline{b}\ot h\ot \underline{c} d\sim a\underline{b}(h\triangleright \underline{c} )\ot h \ot d\sim a \ot h \ot (\underline{b}\triangleleft h)\underline{c} d ,\]
		where, as in \cref{subsec:crossed_Gpd}, we write $\underline{a}$ to indicate an element of $\underline{\Nn}= \Nn^\circlearrowright(\Hh^0,\Hh^0)$, and $\triangleright,\triangleleft$ are the left and right action by conjugation of $\Hh$ on $\underline{\Nn}$, respectively.\end{lemma}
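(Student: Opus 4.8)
The plan is to show two things: first, that every $x \in \Gg$ admits \emph{some} representation $x = ahb$ with $a, b \in \Nn$, $h \in \Hh$; and second, that two such representations are related by a finite chain of the generating moves defining $\sim$. The existence part is essentially the definition of the two-sided quotient $\Gg/\Nn$: since $\pi$ is surjective and $\Hh = \Gg/\Nn$ with section $s$, given $x$ set $h := s\pi(x)$. Then $\pi(xh^{-1}) $ is a unit, so $xh^{-1} \in \Nn\Hh^{\circlearrowright}$... more carefully, I would argue that $x \sim_{\Nn} h$ in the sense of Definition \ref{def:groupoid_quotients} (two-sided), which by the description of $\sim$ there means exactly $x = n_1 h m_1$ for $n_1, m_1 \in \Nn$; one must only check that the source and target of $h$ match up so that the product $ahb$ is composable, which follows from $\pi$ respecting sources and targets and $s$ being a section. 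This gives $a := n_1$, $b := m_1$. I should also note that $b a'$ (for $a'$ from a second representation, after cancellation) will be a loop at a vertex of $\Hh^0$, so it lies in $\underline{\Nn}$, justifying the appearance of $\triangleright, \triangleleft$.

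For the uniqueness-up-to-$\sim$ part, suppose $x = a_1 h_1 b_1 = a_2 h_2 b_2$. First I would show $h_1 = h_2$: applying $\pi$ and using that $\pi$ kills $\Nn$ while $\pi s = \id_\Hh$ gives $\pi(x) = h_1 = h_2 =: h$ (identifying $\Hh$ with $\im(s)$). Now $a_1 h b_1 = a_2 h b_2$ in $\Gg$, so $a_2^{-1} a_1 h = h b_2 b_1^{-1}$; here $a_2^{-1}a_1$ is a loop (both $a_1, a_2$ have the same source, namely the source of $x$, and the same target, forced by $h$ being fixed) — but is it a loop at a vertex of $\Hh^0$? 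Its target is $\source(h) \in \Hh^0$, yes. Write $\underline{u} := a_2^{-1} a_1 \in \underline{\Nn}$ and $\underline{v} := b_2 b_1^{-1} \in \underline{\Nn}$. Then $\underline{u} h = h \underline{v}$ in $\Gg$, i.e.\ $\underline{v} = h^{-1}\underline{u} h = \underline{u} \triangleleft h$ (equivalently $\underline{u} = h \triangleright \underline{v}$), by the very definition of the conjugation action $\triangleright, \triangleleft$ of $\Hh$ on $\underline{\Nn}$. Feeding this back: $a_1 = a_2 \underline{u}$ and $b_1 = \underline{v}^{-1} b_2 = (\underline{u}\triangleleft h)^{-1} b_2$, and I claim
\[ a_2 \ot h \ot b_2 \;\sim\; a_2 \underline{u} \ot h \ot \underline{u}^{-1} b_2 \cdot(\text{correction}) \;\sim\; a_1 \ot h \ot b_1, \]
which I would make precise by moving $\underline{u}$ across $h$ using exactly the generating relation $a\underline{b} \ot h \ot \underline{c}d \sim a\underline{b}(h\triangleright\underline{c}) \ot h \ot d \sim a \ot h \ot (\underline{b}\triangleleft h)\underline{c}d$: starting from $a_2 \ot h \ot b_2$, insert $\underline{u}\,\underline{u}^{-1}$ appropriately and slide one copy through. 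The bookkeeping is routine once the key algebraic identity $\underline{v} = \underline{u}\triangleleft h$ is in hand.

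The main obstacle I anticipate is not any single hard step but rather the careful tracking of \emph{composability / sources and targets} throughout: one must repeatedly verify that the various loops $a_2^{-1}a_1$, $b_2 b_1^{-1}$, etc.\ are indeed based at vertices of $\Hh^0$ (so that $\triangleright$, $\triangleleft$ apply) and that all the displayed products are defined in $\Gg$. This is exactly the kind of vertex-level check that the $(\triangleright,\triangleleft)$-formalism in \S\ref{subsec:twisted_gpd} was designed to streamline, and it hinges on the standing hypothesis (from \S\ref{subsec:crossed_Gpd}) that each connected component of $\Nn$ meets $\Hh^0$ in exactly one vertex, which guarantees the relevant loops land in $\underline{\Nn} = \Nn^{\circlearrowright}(\Hh^0, \Hh^0)$. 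A secondary subtlety is that $\sim$ is defined as the equivalence relation \emph{generated} by those moves, so for the ``only if'' direction I only need to exhibit a chain, not a normal form; conversely, for the ``if'' direction (that $\sim$-equivalent triples give the same $x$) I just observe that each generating move leaves the product $ahb$ unchanged in $\Gg$, since $h\triangleright \underline{c} = h\underline{c}h^{-1}$ and $\underline{b}\triangleleft h = h^{-1}\underline{b}h$ by definition, so e.g.\ $a\underline{b}(h\triangleright\underline{c})\cdot h\cdot d = a\underline{b}\cdot h\underline{c}h^{-1}\cdot h\cdot d = a\underline{b}\cdot h\cdot \underline{c}d$, and similarly on the other side.
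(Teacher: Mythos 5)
Your proposal is correct and follows essentially the same route as the paper's proof: establish $h_1=h_2$ from the injectivity of $\pi$ on $\im(s)$, extract the conjugation identity between the loops $a_2^{-1}a_1$ and $b_2b_1^{-1}$ (the paper phrases it as $a_1^{-1}a_2 = h_1\triangleright(b_1b_2^{-1})$, the transpose of your $\underline{u}=h\triangleright\underline{v}$), and conclude with a single application of the generating move; the vertex-level checks you flag are handled in the paper exactly as you anticipate, via the hypothesis that each connected component of $\Nn$ meets $\Hh^0$ in one vertex. The only cosmetic difference is that the existence of a representation $x=ahb$ is treated in the paper as part of the surrounding discussion rather than of the lemma itself.
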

	\begin{proof}
		Observe that equivalent triples have indeed the same product. 
		
		Conversely, let $a_1\ot h_1\ot b_1$ and $a_2\ot h_2\ot b_2$ have the same product in $\Gg$ (the reader may refer to \cref{fig:welldefined} throughout the proof). In particular, $\source(a_1) = \source(a_2)$ and $\target(b_1)=\target(b_2)$. Since every connected component of $\Nn$ contains exactly one vertex of $\Hh$, this implies that $h_1$ and $h_2$ have same source and same target. Observe that
		\[ a_2h_2b_2 = a_1a_1^{-1} a_2 h_2 b_2 b_1^{-1} b_1 = a_1h_1b_1, \]
		which, since $\Gg$ is a groupoid, implies $h_1 = a_1^{-1}a_2 h_2 b_2b_1^{-1}$, where $a_1^{-1}a_2$ and $b_2b_1^{-1}$ are loops in $\Nn$. But the projection $\pi\colon \Gg\to \Gg/\Nn$ is an isomorphism on the subgroupoid $\Hh$, thus the arrows $h_1$ and  $a_1^{-1}a_2 h_2 b_2b_1^{-1}$ can only have the same image if $h_1 = h_2$. If we substitute $h_1 = h_2$ in the equation $h_1 = a_1^{-1}a_2 h_2 b_2b_1^{-1}$, we get $a_1^{-1}a_2 = h_1 (b_1b_2^{-1}) h_1^{-1} = h_1\triangleright (b_1b_2^{-1})$, where the action is well-defined because $b_1b_2^{-1}$ is a loop. Thus
		\begin{align*}
			a_2 \ot h_2\ot b_2 &= a_1a_1^{-1}a_2\ot h_1\ot b_2\\
			&= a_1 (h_1\triangleright b_1b_2^{-1}) \ot h_1\ot b_2\\
			&\sim a_1\ot h_1\ot b_1b_2^{-1} b_2\\
			&= a_1\ot h_1\ot b_1,
		\end{align*}
		as desired.
		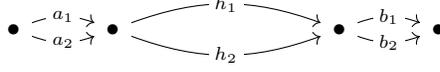
\begin{figure}[t]
			\[\begin{tikzcd}
				\bullet & \bullet &&& \bullet & \bullet
				\arrow["{a_1}"{description}, bend left=20, from=1-1, to=1-2]
				\arrow["{a_2}"{description}, bend right=20, from=1-1, to=1-2]
				\arrow[""{name=0, anchor=center, inner sep=0}, "{h_1}"{description}, bend left=20, from=1-2, to=1-5]
				\arrow[""{name=1, anchor=center, inner sep=0}, "{h_2}"{description}, bend right=20, from=1-2, to=1-5]
				\arrow["{b_1}"{description}, bend left=20, from=1-5, to=1-6]
				\arrow["{b_2}"{description}, bend right=20, from=1-5, to=1-6]
			\end{tikzcd}\]
			\caption{Two paths $a_1\ot h_1\ot b_1$ and $a_2\ot h_2\ot b_2$ in $\Gg$ with same product. This picture is for reference throughout the proof of \cref{lemma:NHNgenerates}, to help the reader check what compositions are allowed.}\label{fig:welldefined}
		\end{figure}
	\end{proof}
	As a consequence of \cref{lemma:NHNgenerates}, one gets a strong isomorphism of quivers \[\Gg\cong \Nn\otb \Hh\otb \Nn,\] where the strong morphism $\Gg\to \Nn\otb \Hh\otb \Nn$ sends $x\in \Gg$ to the unique class $a\otb h\otb b$ having $ahb= x$; and the strong morphism $\Nn\otb \Hh\otb \Nn$ is induced by the multiplication $a\otb h\otb b\mapsto ahb$, and it is well-defined because all equivalent triples $a\ot h\ot b$ have same product.
	\begin{theorem}\label{thm:Radford_Gpd}
		The strong isomorphism of quivers $\Gg\cong \Nn\otb \Hh \otb \Nn$ described above is a strong isomorphism of groupoids $\Gg\cong \Nn\rcross{}{\triangleright} \Hh\lcross{\triangleleft}{}\Nn$.
	\end{theorem}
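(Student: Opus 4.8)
The plan is to verify that the quiver isomorphism $\Gg \cong \Nn\otb\Hh\otb\Nn$ established via Lemma \ref{lemma:NHNgenerates} intertwines the multiplication of $\Gg$ with the crossed product multiplication, and that it sends units to units and inverses to inverses. Since the underlying map $\Gg\to\Nn\otb\Hh\otb\Nn$ is already known to be a strong isomorphism of quivers, and since a bijective morphism of quivers that also respects the groupoid operations is automatically a strong isomorphism of groupoids, it suffices to check compatibility with the algebraic structure. By Proposition \ref{prop:crossed_is_gpd} we already know $\Nn\rcross{}{\triangleright}\Hh\lcross{\triangleleft}{}\Nn$ is a genuine groupoid, so we only need that the bijection respects products.

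First I would take two composable arrows $x,y$ in $\Gg$, write them (using Lemma \ref{lemma:NHNgenerates}) as $x = a_1h_1b_1$ and $y = a_2h_2b_2$ with $a_i,b_i\in\Nn$, $h_i\in\Hh$, so that $x$ corresponds to the class $a_1\otb h_1\otb b_1$ and $y$ to $a_2\otb h_2\otb b_2$. Composability of $x$ and $y$ in $\Gg$ forces $\target(b_1)=\source(a_2)$, so that $b_1a_2$ is defined; moreover $b_1a_2$ is a loop on $\target(h_1)=\source(h_2)$ (the unique vertex of $\Hh$ in that component of $\Nn$), hence $b_1a_2\in\underline{\Nn}$ and $h_1\triangleright(b_1a_2)$ makes sense. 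Then in $\Gg$ one computes directly
\[ xy = a_1h_1b_1a_2h_2b_2 = a_1 h_1 (b_1a_2) h_1^{-1} h_1 h_2 b_2 = a_1\,(h_1\triangleright b_1a_2)\; h_1h_2\; b_2, \]
which is exactly the representative of the crossed product $(a_1\otb h_1\otb b_1)(a_2\otb h_2\otb b_2)$. This is the core identity, and it is essentially a one-line manipulation using that $\triangleright$ is conjugation by $\Hh$ inside $\Gg$; the only thing to be careful about is that every intermediate product is legitimately defined in the groupoid $\Gg$, which follows from the source/target bookkeeping just described.

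Next I would check units and inverses. For $\lambda\in\Gg^0=\Nn^0$, pick $a\in\Nn(\lambda,\mu)$ with $\mu$ the distinguished vertex of $\Hh$ in that component; then $a\,1_\mu\,a^{-1} = 1_\lambda$ in $\Gg$, which is precisely the representative of the crossed-product unit $a\otb 1_\mu\otb a^{-1}$, so units correspond to units. For inverses, from $x = a h b$ one has $x^{-1} = b^{-1} h^{-1} a^{-1}$ in $\Gg$, which corresponds to the class $b^{-1}\otb h^{-1}\otb a^{-1}$, matching the inverse formula in the crossed product. Putting these together, the quiver isomorphism is a morphism of groupoids in both directions, hence a strong isomorphism $\Gg\cong\Nn\rcross{}{\triangleright}\Hh\lcross{\triangleleft}{}\Nn$.

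The main obstacle, such as it is, is not conceptual but bookkeeping: one must keep track of which products are defined at each step (the composability constraints on consecutive arrows in $\Gg$) and confirm that they align exactly with the composability constraints built into the twisted/balanced tensor product underlying $\Nn\rcross{}{\triangleright}\Hh\lcross{\triangleleft}{}\Nn$. Once one observes that in both settings composability of $(a_1\otb h_1\otb b_1)$ with $(a_2\otb h_2\otb b_2)$ amounts to $\target(b_1)=\source(a_2)$ together with $h_1,h_2$ consecutive, the match is immediate, and the well-definedness relative to the equivalence relation $\sim$ is already covered by Lemma \ref{lemma:NHNgenerates} on the $\Gg$ side and by Proposition \ref{prop:crossed_is_gpd} on the crossed-product side.
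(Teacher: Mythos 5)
Your proposal is correct and follows essentially the same route as the paper: both rely on the quiver isomorphism from Lemma \ref{lemma:NHNgenerates} and then verify multiplicativity via the single conjugation identity $a_1h_1b_1a_2h_2b_2 = a_1(h_1\triangleright b_1a_2)\,h_1h_2\,b_2$, justified by the observation that $b_1a_2$ is a loop at the unique $\Hh$-vertex of its connected component of $\Nn$. Your additional explicit checks of units and inverses are harmless but not needed, since a multiplicative bijection between groupoids automatically preserves them.
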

	\begin{proof}
		Let $x\ot y\in(\Gg\ot \Gg)^1$, with $x= a_1h_1b_1$ and $y=a_2h_2b_2$ for $a_i, b_i\in \Nn^1$, $h_i\in \Hh^1$. Clearly, the product $xy$ can be written as 
		\[xy = a_1h_1b_1a_2h_2b_2
		\overset{(\dagger)}{=} a_1 (h_1 b_1a_2 h_1^{-1})h_1h_2 b_2 ,
		\]
		where the equality marked with $(\dagger)$ requires that $b_1a_2$ be a loop, but this is true, because both $\source(b_1)$ and $\target(a_2)$ lie in $\Hh^0$ and they both  lie in the same connected component of $\Nn$, thus they are the same vertex.
		
		Therefore, $xy$ admits the representation $a_1 (h_1\triangleright b_1a_2)\otb h_1h_2\otb b_2$ in $\Nn\otb \Hh\otb \Nn$, and this is exactly the product of $a_1\otb h_1\otb b_1$ and $a_2\otb h_2\otb b_2$ in $\Nn\rcross{}{\triangleright} \Hh\lcross{\triangleleft}{}\Nn$.
	\end{proof}
	\begin{corollary}\label{cor:lifted_Radford_Gpd}
		Let $\Nn\to\Gg\to\Hh$ be a split short exact sequence in $\Gpd$. Then it can be lifted to a universal splitting \textsc{fit} sequence $\tilde{\Nn}\to \tilde{\Gg}\to \Hh$ as in \cref{prop:universal_FIT_for_split}, where $\tilde{\Gg}$ is strongly isomorphic to the crossed product $\tilde{\Nn}\rcross{}{} \Hh\lcross{ }{}\tilde{\Nn}$ via the left and right actions by conjugation.
	\end{corollary}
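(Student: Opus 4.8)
The plan is to obtain the statement as a direct combination of Proposition~\ref{prop:universal_FIT_for_split} with Theorem~\ref{thm:Radford_Gpd}: the former produces the universal lifting $\tilde{\mathbf{S}}$, and the latter identifies its middle term with a crossed product. The only genuine work is to check that the lifted sequence $\tilde{\Nn}\to\tilde{\Gg}\to\Hh$ satisfies the standing hypotheses of \S\ref{subsec:crossed_Gpd}.

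First I would apply Proposition~\ref{prop:universal_FIT_for_split} to the given split short exact sequence $\mathbf{S}=(\Nn\to\Gg\to\Hh)$ with its chosen splitting. This produces the universal split \textsc{fit} sequence $\tilde{\mathbf{S}}=(\tilde{\Nn}\to\tilde{\Gg}\to\Hh)$ together with a splitting $\tilde{s}\colon\Hh\to\tilde{\Gg}$ of $\tilde{f}$, where $\tilde{\Nn}=\ker(\tilde{f})$, and the canonical morphism $\mathbf{S}\to\tilde{\mathbf{S}}$ respects splittings (this is the content of the Lemma preceding Proposition~\ref{prop:universal_FIT_for_split}). Since $\tilde{f}\tilde{s}=\id_\Hh$, the section $\tilde{s}$ is a monomorphism of groupoids; being faithful and injective on vertices, its image $\im(\tilde{s})$ is a subgroupoid of $\tilde{\Gg}$ isomorphic to $\Hh$ via $\tilde{s}$, so we identify $\Hh$ with $\im(\tilde{s})$. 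Likewise $\tilde{\Nn}$ is a normal (hence wide) subgroupoid of $\tilde{\Gg}$, and by the \textsc{fit} property $\tilde{f}$ induces an isomorphism $\tilde{\Gg}/\tilde{\Nn}\cong\Hh$; thus we are in the situation opening \S\ref{subsec:Radford_Gpd}, provided the vertex conditions hold.

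Next I would verify the hypotheses of \S\ref{subsec:crossed_Gpd}. Since $\tilde{\Nn}$ is wide one has $\Hh^0\subseteq\tilde{\Gg}^0=\tilde{\Nn}^0$. For the crucial condition, recall that the natural projection $\tilde{\pi}\colon\tilde{\Gg}\to\tilde{\Gg}/\tilde{\Nn}$ collapses each connected component of $\tilde{\Nn}$ to a single vertex, so that on vertices $\tilde{f}^0(\lambda)=\tilde{f}^0(\mu)$ holds precisely when $\lambda$ and $\mu$ lie in the same connected component of $\tilde{\Nn}$. Applied to $\tilde{s}^0(\nu),\tilde{s}^0(\nu')$ for $\nu\neq\nu'$ in $\Hh^0$, this shows they lie in distinct components; while for an arbitrary connected component $C$ of $\tilde{\Nn}$, picking $\lambda\in C$ and setting $\nu=\tilde{f}^0(\lambda)$ gives $\tilde{f}^0(\tilde{s}^0(\nu))=\nu=\tilde{f}^0(\lambda)$, so $\tilde{s}^0(\nu)\in C$. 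Hence every connected component of $\tilde{\Nn}$ contains exactly one vertex of $\Hh$. Finally, conjugation by the arrows of $\Hh$ restricts to a left semistrong action $\triangleright$ of $\Hh$ on $\underline{\tilde{\Nn}}=\tilde{\Nn}^\circlearrowright(\Hh^0,\Hh^0)$---this uses the normality of $\tilde{\Nn}$---and it makes $\underline{\tilde{\Nn}}$ into an $\Hh$-bimodule algebra because conjugation is multiplicative on loops; these are precisely the data required in \S\ref{subsec:crossed_Gpd}.

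With all hypotheses in place, Theorem~\ref{thm:Radford_Gpd} applied to the split \textsc{fit} sequence $\tilde{\Nn}\to\tilde{\Gg}\to\Hh$ gives a strong isomorphism of groupoids $\tilde{\Gg}\cong\tilde{\Nn}\rcross{}{\triangleright}\Hh\lcross{\triangleleft}{}\tilde{\Nn}$ via the actions by conjugation, which is the assertion; and the universal property of $\tilde{\mathbf{S}}$ is inherited verbatim from Proposition~\ref{prop:universal_FIT_for_split}. The main---and essentially only---obstacle is the verification described above that $\tilde{\Nn}$ meets each of its connected components in exactly one vertex of $\Hh$, which is where the \textsc{fit} property is really used; everything else amounts to transporting the constructions of \S\ref{subsec:liftedFITGpd} and \S\ref{subsec:Radford_Gpd} along the identifications $\tilde{\Nn}=\ker(\tilde{f})$ and $\Hh=\im(\tilde{s})$.
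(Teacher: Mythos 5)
Your proposal is correct and follows exactly the route the paper intends: the corollary is stated without proof as an immediate combination of Proposition \ref{prop:universal_FIT_for_split} (which supplies the universal split \textsc{fit} lifting) with Theorem \ref{thm:Radford_Gpd} (which identifies the middle term of any split \textsc{fit} sequence with the two-sided crossed product). Your explicit verification that each connected component of $\tilde{\Nn}$ meets $\im(\tilde{s})^0$ in exactly one vertex — using the \textsc{fit} property to identify the fibres of $\tilde{f}^0$ with the components of $\ker(\tilde{f})$ — is a correct filling-in of the standing hypotheses of \S\ref{subsec:crossed_Gpd} that the paper leaves implicit.
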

	\begin{example} Consider again the coarse groupoid $\widehat{9}$ and the normal subgroupoid $\Nn = \widehat{\{0,3,6 \}}\sqcup\widehat{\{1,4,7 \}}\sqcup \widehat{\{2,5,8 \}}$. The quotient $\Gg/\Nn$ is a coarse groupoid on three vertices. As a section $\Gg/\Nn\to \Gg$, we may choose the map that picks, for each equivalence class of vertices, its smallest representative. The resulting immersion of $\Gg/\Nn$ into $\Gg$ is the groupoid $\Hh = \widehat{3}$; see again \cref{fig:crossed9}.
		
		Then, the groupoid $\Gg$ is recovered as the crossed product $\Nn\rcross{}{} \Hh\lcross{}{} \Nn$. Notice that the actions are all trivial, since the only loops in $\Nn$ are the units.
		
		Observe that $\Nn\otb \Hh\otb \Nn = \Nn\ot \Hh\ot \Nn$ holds in this case, since every equivalence class contains only one triple.
		
		From this example, we can also see how the arrows of $\Gg$ are explicitly recovered as elements of $\Nn\otb \Hh\otb \Nn$. For instance, the unit $[7,7]$ is $[7,1]\ot [1,1]\ot [1,7]$; and the arrow $[7,5]$ is $[7,1]\ot [1,2]\ot [2,5]$. 
	\end{example}
	\subsection{On the crossed product and Split Lemma in $\maybebm{\Gpd_\Lambda}$}\label{subsec:crossed_GpdLambda} We now consider the crossed product in case $\Nn$ is the kernel of a morphism in $\Gpd_\Lambda$, and hence $\Nn = \Nn^\circlearrowright$. \begin{remark}\label{rem:actionconn_Gpbundle} Recall that a \textit{group bundle} a bundle of groups that are all isomorphic with each other. Observe that the action of $h$ induces isomorphisms $h\triangleright\blank$   between the isotropy groups $\Nn_{\target(h)}$ and $\Nn_{\source(h)}$. Thus if $\Hh$ is connected, $\Nn$ is a group bundle. More generally, the isomorphism classes of the isotropy groups of $\Nn$ cannot be more than the number of connected components of $\Hh$.\end{remark}
	
	When $\Nn$ is a group bundle, our notion of crossed product will collapse to the \textit{semidirect product of a group by a groupoid} defined for instance in Brown \cite[\S11.4]{BrownBookGroupoids}. A semidirect product $\Nn\rtimes \Hh$ of groupoids is described by Ibort and Marmo \cite{IbortMarmoGroupoids}, who moreover provide a Split Lemma, but in their setting again $\Nn$ is forced to be a bundle of groups. We shall see how both these notions are generalised by our crossed product of groupoids, and Ibort and Marmo's Split Lemma is an instance of \cref{thm:Radford_Gpd}.
	
	\begin{remark}
		In the same setting as \cref{subsec:crossed_Gpd}, if $\Nn$ is a bundle of groups, the assumption that every connected component of $\Nn$ contains a vertex in $\Hh^0$ implies $\Nn^0 = \Hh^0 = \Lambda$, and $\underline{\Nn} = \Nn$. 
		
		As a consequence, there is a strong isomorphism $\Nn\otb \Hh\otb \Nn\cong \Nn\ot \Hh$ of quivers over $\Lambda$, given by
		\[ \varphi\colon \underline{a}\otb h\otb \underline{b} \mapsto \underline{a} (h\triangleright \underline{b})\ot h.\]
		This is easily seen to be well-defined. The inverse is the morphism 
		\[ \underline{a}\ot h\mapsto \underline{a}\otb h\otb 1.\]
		Alternatively, the fact that $\varphi$ is an isomorphism is immediate from the fact that 
		\[ a\otb h\otb b = a(h\triangleright b)\otb h\otb 1 \]
		for all $a\otb h\otb b\in (\Nn\otb \Hh\otb \Nn)^1$.
	\end{remark}
	Since every arrow in $\Nn^1$ lies in $\underline{\Nn}^1$, we henceforth suppress the underline from the notation.
	\begin{definition}
		For a bundle of groups $\Nn$ over $\Lambda$, a groupoid $\Hh$ over $\Lambda$, and a left semistrong action $\triangleright $ of $\Hh$ on $\Nn$, we define the crossed product $\Nn \rcross{}{\triangleright}\Hh$ as the quiver $\Nn\ot \Hh$, with multiplication
		\[ (a_1\ot h_1) (a_2\ot h_2) = a_1(h_1\triangleright a_2)\ot h_1h_2, \]
		units $1\ot 1$, and inverses $ (a\ot h)^{-1} = (h^{-1}\triangleright a^{-1})\ot h^{-1}$. We simply write $\Nn\rcross{}{}\Hh$ when the action is understood.
	\end{definition}
	We skip the verification that the above structure is a groupoid, since it is the same as for semidirect products of groups. We just observe that, in the definition, the product $a_1 (h_1\triangleright a_2)$ is well defined because the action $\triangleright$ is semistrong, which implies $\source(h_1\triangleright a_2) = \source(h_1) = \target(a_1)$. 
	\begin{lemma}\label{lem:two_sided_crossed_iso_one_sided}
		The morphism $\varphi$ is a strong isomorphism of groupoids
		\[ \Nn\rcross{}{\triangleright} \Hh\lcross{\triangleleft}{} \Nn \cong \Nn\rcross{}{\triangleright} \Hh. \]
	\end{lemma}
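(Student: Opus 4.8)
The strategy is to leverage the preceding remark, which has already established that $\varphi\colon \underline a\otb h\otb \underline b\mapsto \underline a(h\triangleright \underline b)\ot h$ is a strong \emph{isomorphism of quivers} over $\Lambda$, with two-sided inverse $a\ot h\mapsto a\otb h\otb 1$. Hence the only remaining task is to verify that $\varphi$ intertwines the two groupoid multiplications. Once that is done, $\varphi^{-1}$ is automatically a morphism of groupoids as well, since a composition-preserving bijection between groupoids necessarily preserves local units and inverses (these being uniquely determined by the composition). Thus $\varphi$ becomes a strong isomorphism of groupoids $\Nn\rcross{}{\triangleright}\Hh\lcross{\triangleleft}{}\Nn\cong \Nn\rcross{}{\triangleright}\Hh$.

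For the multiplicativity, I would take two consecutive arrows $a_1\otb h_1\otb b_1$ and $a_2\otb h_2\otb b_2$ in $\Nn\rcross{}{\triangleright}\Hh\lcross{\triangleleft}{}\Nn$ and compute both sides. On one hand,
\[
\varphi\big((a_1\otb h_1\otb b_1)(a_2\otb h_2\otb b_2)\big)
=\varphi\big(a_1(h_1\triangleright b_1a_2)\otb h_1h_2\otb b_2\big)
= a_1(h_1\triangleright b_1a_2)(h_1h_2\triangleright b_2)\ot h_1h_2 .
\]
On the other hand, using the multiplication of $\Nn\rcross{}{\triangleright}\Hh$, the left action rule, and the fact that $h_1\triangleright(h_2\triangleright \blank)=h_1h_2\triangleright\blank$,
\[
\varphi(a_1\otb h_1\otb b_1)\,\varphi(a_2\otb h_2\otb b_2)
= \big(a_1(h_1\triangleright b_1)\ot h_1\big)\big(a_2(h_2\triangleright b_2)\ot h_2\big)
= a_1(h_1\triangleright b_1)(h_1\triangleright a_2)(h_1h_2\triangleright b_2)\ot h_1h_2 .
\]
The two expressions coincide because $b_1a_2$ is a loop on $\target(h_1)=\source(h_2)$ — here one uses the standing hypothesis that $\Nn$ is a bundle of groups, so that $b_1$ and $a_2$ are loops sitting on the same vertex, and $\underline{\Nn}=\Nn$ — and because $\Nn$ is an $\Hh$-module algebra, whence $h_1\triangleright(b_1a_2)=(h_1\triangleright b_1)(h_1\triangleright a_2)$. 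I would also remark, for completeness, that $\varphi$ sends the unit $1_\lambda\otb 1_\lambda\otb 1_\lambda$ to $1_\lambda(1_\lambda\triangleright 1_\lambda)\ot 1_\lambda = 1_\lambda\ot 1_\lambda$ and inverses to inverses, although (as noted above) this is forced once multiplicativity and bijectivity are known.

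There is no genuine obstacle here: the proof is a direct verification, and the only mildly delicate point is to make sure the products occurring really are defined, which is exactly what the bundle-of-groups hypothesis (collapsing $\Nn^0=\Hh^0=\Lambda$ and $\underline\Nn=\Nn$) guarantees. I would keep the written-out proof to the single display computation above, citing the preceding remark for the quiver isomorphism and for well-definedness of $\varphi$ on the balanced tensor product.
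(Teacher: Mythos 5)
Your proposal is correct and follows essentially the same route as the paper: both reduce the statement to checking that the quiver isomorphism $\varphi$ from the preceding remark intertwines the two multiplications, via the same direct computation using the module-algebra identity $h_1\triangleright(b_1a_2)=(h_1\triangleright b_1)(h_1\triangleright a_2)$ and the fact that $b_1a_2$ is a loop on $\target(h_1)$. The only cosmetic difference is that the paper first rewrites $\varphi(a_2\otb h_2\otb b_2)$ as $\varphi\bigl(a_2(h_2\triangleright b_2)\otb h_2\otb 1\bigr)$ and checks units explicitly via $a\otb 1\otb a^{-1}=1\otb 1\otb 1$, whereas you compare the two sides directly and note that units and inverses are forced by multiplicativity and bijectivity; both are fine.
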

	\begin{proof}
		Since $\Nn^0 = \Hh^0$ and $\Nn  = \underline{\Nn}$, it is immediate to observe that $a\otb 1\otb a^{-1} = 1\otb 1\otb 1$, and hence $\varphi$ sends units into units. As for products, one has
		\begin{align*}
			\varphi(a_1\otb h_1\otb b_1)\varphi(a_2\otb h_2\otb b_2)& = \varphi(a_1\otb h_1\otb b_1)\varphi(a_2(h_2\triangleright b_2)\otb h_2\otb 1)\\
			&= \Big( a_1 (h_1\triangleright b_1)\ot h_1 \Big)\Big( a_2(h_2\triangleright b_2)\ot h_2\Big)\\
			&= a_1 (h_1\triangleright b_1) (h_1\triangleright (h_2\triangleright b_2))\ot h_1h_2\\
			&= a_1 (h_1\triangleright b_1) (h_1h_2\triangleright b_2)\ot h_1h_2\\
			&= \varphi \Big( a_1 (h_1\triangleright b_1a_2)\otb h_1h_2\otb b_2 \Big)\\
			&= \varphi\Big( (a_1\otb h_1\otb b_1) (a_2\otb h_2\otb b_2) \Big),
		\end{align*}
		as desired. 
	\end{proof}
	Observe that the suspiciously nice-looking inverse $(a\otb h\otb b)^{-1} = b^{-1}\otb h^{-1}\otb a^{-1}$ from the proof of \cref{prop:crossed_is_gpd}, in this new setting becomes
	\begin{align*} \big(a(h\triangleright b)\otb h\otb 1 \big)^{-1} = (a\otb h \otb b)^{-1} &= b^{-1}\otb h^{-1}\otb a^{-1}\\
		&= b^{-1}(h^{-1}\triangleright a^{-1})\otb h^{-1}\otb 1\\
		&= (h^{-1}h\triangleright b^{-1}) (h^{-1}\triangleright a^{-1})\otb h^{-1}\otb 1\\
		&= h^{-1}\triangleright \big( (h\triangleright b)^{-1}a^{-1} \big)\otb h^{-1}\otb 1\\
		&= h^{-1}\triangleright \big( a(h\triangleright b)  \big)^{-1}\otb h^{-1}\otb 1, \end{align*}
	which mirrors the expression of the inverses in $\Nn\rcross{}{} \Hh$. In some way, this explains why the inverse in the crossed product $\Nn\rcross{}{} \Hh$ look so complicated: it reflects a much nicer expression of the inverse, which lives in a `two-sided' crossed product $\Nn\rcross{}{}\Hh\lcross{}{}\Nn$.
	
	As a corollary, we retrieve the Split Lemma in $\Gpd_\Lambda$.
	\begin{corollary}\label{cor:Radford_GpdLambda}
		Let $\Nn\to \Gg\to \Hh$ be a splitting short exact sequence in $\Gpd_\Lambda$. One has $\Gg\cong \Nn\rcross{}{}\Hh$, where the action $\triangleright$ of $\Hh$ on $\Nn$ is induced by the conjugation in $\Gg$.
	\end{corollary}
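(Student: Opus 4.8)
The plan is to deduce this from the analogous statement for $\Gpd$, namely Theorem~\ref{thm:Radford_Gpd}, by checking that a splitting short exact sequence in $\Gpd_\Lambda$ supplies exactly the data required there, and then collapsing the resulting two-sided crossed product to a one-sided one by means of Lemma~\ref{lem:two_sided_crossed_iso_one_sided}. A splitting sequence in $\Gpd_\Lambda$ is, in particular, a splitting sequence in $\Gpd$, so the two results apply verbatim once the hypotheses are matched up.

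First I would unwind what the data amount to. The surjection $\pi\colon\Gg\to\Hh$ is a strong morphism over $\Lambda$, hence $\Nn=\ker(\pi)$ is a normal subgroup bundle: in particular $\Nn=\Nn^\circlearrowright$ and $\Nn^0=\Lambda=\Hh^0$. Since the sequence is short exact in $\Gpd_\Lambda$, the First Isomorphism Theorem there (\cite[Theorem~1]{avila_complete}) gives a strong isomorphism $\Gg/\Nn\cong\Hh$ over $\Lambda$; transporting the section $s$ along it, we may assume $\Hh=\Gg/\Nn$ with $\Hh$ identified with the wide subgroupoid $\im(s)\subseteq\Gg$. Thus we are precisely in the setting of \S\ref{subsec:Radford_Gpd}.

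Next I would verify the standing assumptions of \S\ref{subsec:crossed_Gpd}: namely $\Hh^0\subseteq\Nn^0$, and every connected component of $\Nn$ contains exactly one vertex of $\Hh$. Both hold at once, because $\Nn$ is a bundle of groups over $\Lambda$ — so its connected components are the singletons $\{\lambda\}$, $\lambda\in\Lambda$ — and $\Hh^0=\Nn^0=\Lambda$. Consequently $\underline{\Nn}=\Nn^\circlearrowright(\Hh^0,\Hh^0)=\Nn$, and the left action $\triangleright$ of $\Hh$ on $\Nn$ by conjugation inside $\Gg$ is a semistrong action (it sends $\target(h)$ to $\source(h)$ by normality of $\Nn$) making $\Nn$ an $\Hh$-bimodule algebra, with $\underline a\triangleleft h=h^{-1}\triangleright\underline a$; this is exactly the action used in the proof of Theorem~\ref{thm:Radford_Gpd}. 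Applying that theorem yields a strong isomorphism of groupoids over $\Lambda$
\[ \Gg\cong \Nn\rcross{}{\triangleright}\Hh\lcross{\triangleleft}{}\Nn. \]

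Finally I would invoke Lemma~\ref{lem:two_sided_crossed_iso_one_sided}: since $\Nn$ is a bundle of groups over $\Lambda=\Nn^0=\Hh^0$ and $\underline{\Nn}=\Nn$, the map $\varphi\colon\underline a\otb h\otb\underline b\mapsto\underline a(h\triangleright\underline b)\ot h$ is a strong isomorphism $\Nn\rcross{}{\triangleright}\Hh\lcross{\triangleleft}{}\Nn\cong\Nn\rcross{}{\triangleright}\Hh$. Composing the two isomorphisms gives $\Gg\cong\Nn\rcross{}{}\Hh$ with $\triangleright$ the conjugation action, which is the assertion. I do not expect a genuine obstacle: the whole content sits in Theorem~\ref{thm:Radford_Gpd} and Lemma~\ref{lem:two_sided_crossed_iso_one_sided}, and the only points needing care — that a split sequence in $\Gpd_\Lambda$ is automatically \textsc{fit} (so $\Hh=\Gg/\Nn$ is a legitimate identification) and that the conjugation action is semistrong with $\Nn$ a bimodule algebra — are already recorded in the text (cf.\@ Remark~\ref{rem:actionconn_Gpbundle} and \S\ref{subsec:crossed_GpdLambda}).
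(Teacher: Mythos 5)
Your proposal is correct and follows exactly the paper's own route: observe that $\Nn=\ker(\pi)$ is a bundle of groups over $\Lambda=\Nn^0=\Hh^0$ (so the standing hypotheses of \S\ref{subsec:crossed_Gpd} hold and $\underline{\Nn}=\Nn$), apply Theorem \ref{thm:Radford_Gpd} to get $\Gg\cong\Nn\rcross{}{\triangleright}\Hh\lcross{\triangleleft}{}\Nn$, and collapse to the one-sided crossed product via Lemma \ref{lem:two_sided_crossed_iso_one_sided}. The additional verifications you spell out (exactness in $\Gpd_\Lambda$ being \textsc{fit}, the conjugation action being semistrong) are correct and are left implicit in the paper's two-line proof.
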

	\begin{proof}
		Since $\Nn\to\Gg\to\Hh$ is a short exact sequence in $\Gpd_\Lambda$, the groupoid $\Nn$ is a bundle of groups over $\Lambda = \Hh^0 =\Nn^0$. The conclusion follows from \cref{thm:Radford_Gpd} together with \cref{lem:two_sided_crossed_iso_one_sided}.
	\end{proof}
	We now consider the crossed product $\Nn \rcross{}{}\Hh$ in $\Gpd_\Lambda$, in the case when $\Nn = N\rtimes \One_{\Lambda}$ is a group bundle. We would like to reinterpret this crossed product as a semidirect product $N\rtimes \Hh$ in the sense of \cref{sec:semidirect}.
	\begin{lemma}\label{lem:semidirect_is_crossed}
		In the hypotheses of \cref{lem:two_sided_crossed_iso_one_sided}, if $\Hh$ is connected, and hence $\Nn$ is a group bundle $\Nn \cong N\lfibre{}{} \One_{\Lambda}$, then there is a strong isomorphism of groupoids
		\[ N \lfibre{}{} \Hh  \cong \Nn\rcross{}{}\Hh,\]
		for some action $\btriangleright$ of $\Hh$ on $N$. This isomorphism is canonical if $\Hh$ is Schurian. 
	\end{lemma}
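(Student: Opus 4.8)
\emph{Approach.} The plan is to promote $\Nn\rcross{}{\triangleright}\Hh$ to a genuine ``group by groupoid'' product by trivialising the bundle $\Nn$ along the $\Hh$-action itself, and then transporting the conjugation action onto a single group. Since $\Hh$ is connected I would fix a vertex $\lambda_0\in\Lambda$ together with a wide coarse subgroupoid $\widehat{\Lambda}\subseteq\Hh$ (forced to be all of $\Hh$ when $\Hh$ is Schurian, and otherwise requiring a choice, exactly as in Proposition \ref{prop:product_groupd_coarse_groupoid}), writing $[\lambda_0,\lambda]$ for its unique arrow $\lambda_0\to\lambda$. Taking $N:=\Nn_{\lambda_0}$ as a representative of the common isotropy group, I set $\theta_\lambda:=[\lambda_0,\lambda]\triangleright\blank\colon\Nn_\lambda\to N$ for every $\lambda\in\Lambda$; since $\underline{\Nn}=\Nn$ is an $\Hh$-module algebra, each $\theta_\lambda$ is a group isomorphism, with inverse $[\lambda,\lambda_0]\triangleright\blank$, and $\theta_{\lambda_0}=\id_N$.

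\emph{The transported action.} Next, for $h\in\Hh(\lambda,\mu)$ I would define $h\btriangleright\blank:=\theta_\lambda\circ(h\triangleright\blank)\circ\theta_\mu^{-1}\in\Aut(N)$. The action and module-algebra axioms for $\triangleright$, together with $1_\lambda\triangleright\blank=\id$, then give directly that $\btriangleright$ is a left action of $\Hh$ on the one-vertex groupoid $N$ making $N$ an $\Hh$-module algebra; by Lemma \ref{lem:semidirect_is_gpd} — applicable because $N$ has a single vertex, so the action on vertices is strong and the twisted fibre product collapses to the tensor product — the semidirect product $N\lfibre{}{\btriangleright}\Hh$ is a genuine groupoid over $\Lambda$.

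\emph{The isomorphism.} Then I would take $\Phi\colon\Nn\rcross{}{\triangleright}\Hh\to N\lfibre{}{\btriangleright}\Hh$ to be the identity on $\Lambda$ and $\Phi(a\ot h)=\theta_{\source(h)}(a)\times h$ on arrows (legitimate because every arrow of the crossed product has this form, with $a$ a loop at $\source(h)$); it is a bijection on arrows and a strong morphism of quivers, with inverse $n\times h\mapsto\theta_{\source(h)}^{-1}(n)\ot h$. Multiplicativity then comes down to the relation $\theta_{\source(h_1)}(h_1\triangleright a_2)=h_1\btriangleright\theta_{\source(h_2)}(a_2)$ — which is exactly the definition of $\btriangleright$ combined with the composability identity $\target(h_1)=\source(h_2)$ — and to $\theta_{\source(h_1)}$ being a homomorphism:
\begin{align*}
\Phi\big((a_1\ot h_1)(a_2\ot h_2)\big)&=\theta_{\source(h_1)}\big(a_1\,(h_1\triangleright a_2)\big)\times h_1h_2\\
&=\theta_{\source(h_1)}(a_1)\,\big(h_1\btriangleright\theta_{\source(h_2)}(a_2)\big)\times h_1h_2=\Phi(a_1\ot h_1)\,\Phi(a_2\ot h_2).
\end{align*}
Hence $\Phi$ is a strong isomorphism of groupoids. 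When $\Hh$ is Schurian one has $\widehat{\Lambda}=\Hh$, so the only ingredient of the whole construction is canonically determined, and with it $\theta$, $\btriangleright$ and $\Phi$; this yields the canonicity statement.

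\emph{Main obstacle.} The one step that is not a routine verification is the trivialisation: one must resist using an arbitrary family of isomorphisms $\Nn_\lambda\cong N$, because for such a family the conjugation action of $\Hh$ on $\Nn$ would fail to descend to a well-defined action on $N$. The identifications have to be manufactured out of $\triangleright$ itself, along a chosen wide coarse subgroupoid of $\Hh$; everything afterwards — the action axioms and module-algebra property of $\btriangleright$, and the multiplicativity of $\Phi$ — is a short diagram chase. The dependence on that chosen subgroupoid, which vanishes precisely when $\Hh$ is Schurian, is the sole reason canonicity cannot be asserted in general.
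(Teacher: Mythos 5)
Your proposal is correct and follows essentially the same route as the paper: trivialise the group bundle along a chosen maximal wide coarse subgroupoid of $\Hh$ using the action $\triangleright$ itself, transport the action onto the fixed isotropy group by conjugating with these identifications, and check that the evident bijection on arrows is multiplicative, with canonicity holding exactly when the Schurian subgroupoid (and hence the whole construction) is forced. The only cosmetic difference is that you write the isomorphism in the direction $\Nn\rcross{}{\triangleright}\Hh\to N\lfibre{}{\btriangleright}\Hh$, whereas the paper defines its inverse.
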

	\begin{proof} It is always possible to find a maximal Schurian subgroupoid $\Hh'$ of $\Hh$, which is a coarse groupoid isomorphic to $\widehat{\Lambda}$; see e.g.\@ \cite[Remark 5.6 and Lemma 5.7]{ferri2024dynamical}. We identify this subgroupoid $\Hh'$ with $\widehat{\Lambda}$, and use the notation $[\lambda,\mu]$ for the unique arrow $\lambda\to \mu$. 
		
		The map $[\lambda,\mu]\triangleright \blank\colon \Nn_\mu\to \Nn_\lambda$ is an isomorphism of groups. 
		Thus the maps $[\lambda,\mu]\triangleright \blank$ induce  isomorphisms between the isotropy groups of $\Nn$, being all isomorphic to a chosen isotropy group, say, $N = \Nn_\lambda$ for a vertex $\lambda\in\Lambda$. We denote by $\varphi_{\lambda,\mu}$ the isomorphism $N = \Nn_\lambda\to \Nn_\mu$.
		
		Let an action of $\Hh$ on $N$ be defined as
		\[ h\btriangleright a = \varphi_{\lambda,\source(h)}^{-1}(h\triangleright \varphi_{\lambda, \target(h)}(a) ) =\big([\lambda,\source(h)]\, h\, [\target(h),\lambda]\big)\triangleright a  \]
		(namely: $a$ is read as the representative of a loop in $\Nn_{\target(h)}$, so that $h$ can be let act as in the crossed product, and then the result is a loop in $\Nn_{\source(h)}$ that has a representative back in the group $N$). The verification that $\btriangleright$ is an action is left to the reader. We now define a strong morphism 
		\[ f\colon N\rtimes_{\btriangleright} \Hh \to \Nn\rcross{}{\triangleright}\Hh,\quad f(a\times h) = \varphi_{\lambda,\source(h)} (a)\ot h, \]
		whose inverse is clearly
		\[ n \ot h\mapsto  \varphi_{\lambda,\source(h)}^{-1}(n)\times h. \]
		We check that $f$ is a morphism of groupoids. Observe that $(a_1\times h_1) \cdot (a_2\times h_2)$ in $N\rtimes \Hh$ is defined if and only if $\target(a_1) = h_1\btriangleright \source(a_2)$ and $\target(h_1)=\source(h_2)$, if and only if $f(a_1\times h_1) \cdot f(a_2\times h_2)$ is defined in $\Nn\rcross{}{}\Hh$. One has
		\begin{align*}
			f(a_1\times h_1) \cdot f(a_2\times h_2) & = \Big(\varphi_{\lambda, \source(h_1)}(a_1)\ot h_1\Big)\cdot \Big(\varphi_{\lambda, \source(h_2)}(a_2)\ot h_2\Big)\\
			&= \varphi_{\lambda, \source(h_1)}(a_1) \cdot \Big( h_1\triangleright \varphi_{\lambda, \source(h_2)} (a_2) \Big)\ot h_1h_2\\
			&= \varphi_{\lambda, \source(h_1)}(a_1) \cdot \Big( h_1\triangleright \varphi_{\lambda, \target(h_1)} (a_2) \Big)\ot h_1h_2\\
			&= \Big(\varphi_{\lambda, \source(h_1)}(a_1) \cdot \varphi_{\lambda, \source(h_1)} (h_1\btriangleright a_2)\Big)\ot h_1h_2\\
			\overset{(\dagger)}&{=} \varphi_{\lambda, \source(h_1)}\Big( a_1 (h_1\btriangleright a_2)  \Big)\ot h_1h_2\\
			&= f((a_1\times h_1)\cdot (a_2\times h_2)),
		\end{align*}
		as desired, where the step marked with $(\dagger)$ follows from the fact that $\varphi_{\lambda, \source(h_1)}$ is a group homomorphism. Using that $\varphi_{\lambda, \source(h)}^{-1}$ is a group homomorphism, one similarly proves that $f^{-1}$ is also a groupoid morphism.
		
		Observe that the isomorphism $f$ depends on the choice of the maximal Schurian subgroupoid $\Hh'$. The choice $\Hh'= \Hh$ is forced if $\Hh$ is already Schurian, thus in this case $f$ is canonical.
	\end{proof}
	\begin{remark}\label{rem:prod_GP_times_Set_is_crossed}
		The product  $\Gg\cong G\times \widehat{\Lambda}$ of a group with a coarse groupoid that appears in \cref{prop:product_groupd_coarse_groupoid} is a semidirect product $G\lfibre{}{}\widehat{\Lambda}$, and hence it is isomorphic to the crossed product $\Gg^\circlearrowright\rcross{}{} \widehat{\Lambda}$, where $\Gg^\circlearrowright$ is a group bundle (because $\Gg$ is connected), and the action of the Schurian groupoid $\widehat{\Lambda}$ on $\Gg^\circlearrowright$ is given by the conjugation in $\Gg$.
		
		The advantage of this point of view is, now, that the decomposition is independent of the choice of a vertex $\lambda\in\Gg^0$. However, it still depends on the choice of a maximal Schurian subgroupoid $\widehat{\Lambda}$, and this element of indeterminacy cannot be overruled.
	\end{remark}
	We finally retrieve the Split Lemma in $\Gpd_\Lambda$ (Proposition \ref{prop:MetereMontoli}). 
	\begin{corollary}
		Let $\Nn\to \Gg\to \Hh$ be a splitting \textsc{fit} sequence in $\Gpd_\Lambda$, with $\Hh$ connected (and hence $\Nn$ a group bundle). Then $\Gg\cong \Nn_\lambda \lfibre{}{}\Hh$, where $\Nn_\lambda$ is any isotropy group of $\Nn$, and the action of $h\in\Hh^1$ on $\Nn_\lambda$ is defined as in \cref{lem:semidirect_is_crossed}. This isomorphism is canonical if moreover $\Hh$ is a coarse groupoid.
	\end{corollary}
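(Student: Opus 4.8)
The plan is to derive the statement by composing the two isomorphisms already established for crossed products over a fixed vertex set. First I would apply Corollary~\ref{cor:Radford_GpdLambda} to the given splitting sequence $\Nn\to\Gg\to\Hh$: since we work in $\Gpd_\Lambda$, the sequence is automatically \textsc{fit} by \cite[Theorem 1]{avila_complete}; the hypothesis that $\Hh$ is connected forces $\Nn$ to be a bundle of groups over $\Lambda=\Hh^0=\Nn^0$ (Remark~\ref{rem:actionconn_Gpbundle}), so all the standing assumptions of \S\ref{subsec:crossed_GpdLambda} hold, and we obtain a strong isomorphism of groupoids $\Gg\cong\Nn\rcross{}{\triangleright}\Hh$, where $\triangleright$ is the action of $\Hh$ on $\Nn$ induced by conjugation in $\Gg$. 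This isomorphism is the explicit map built in the proof of Theorem~\ref{thm:Radford_Gpd} together with the identification of Lemma~\ref{lem:two_sided_crossed_iso_one_sided}; it is determined once the identifications $\Nn=\im(\iota)$ and $\Hh=\im(s)$ are fixed.

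Next I would feed this into Lemma~\ref{lem:semidirect_is_crossed}. Its hypotheses are exactly the situation we are in ($\Nn$ a group bundle, $\Hh$ connected over $\Lambda$, and $\triangleright$ the given semistrong action), so, after choosing a maximal Schurian subgroupoid $\Hh'\cong\widehat{\Lambda}$ of $\Hh$ and a vertex $\lambda\in\Lambda$ with $N=\Nn_\lambda$, the lemma produces an action $\btriangleright$ of $\Hh$ on $N$ — namely $h\btriangleright a=\big([\lambda,\source(h)]\,h\,[\target(h),\lambda]\big)\triangleright a$, computed via the conjugation action $\triangleright$ — together with a strong isomorphism of groupoids $f\colon N\lfibre{}{\btriangleright}\Hh\to\Nn\rcross{}{\triangleright}\Hh$. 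Composing $f^{-1}$ with the isomorphism of the previous paragraph yields the desired strong isomorphism $\Gg\cong\Nn_\lambda\lfibre{}{}\Hh$, with the action on $\Nn_\lambda$ being precisely the one prescribed by Lemma~\ref{lem:semidirect_is_crossed}. For the last clause I would note that a coarse groupoid is in particular Schurian, so if $\Hh$ is coarse the choice $\Hh'=\Hh$ is forced: the isomorphism $f$ of Lemma~\ref{lem:semidirect_is_crossed} is then canonical, and hence so is the composite, the first factor already being canonical once $\iota$ and $s$ are fixed.

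I do not expect a genuine obstacle here: the result is a formal consequence of Corollary~\ref{cor:Radford_GpdLambda} and Lemma~\ref{lem:semidirect_is_crossed}, and the only care needed is bookkeeping — checking that $\Nn=\Nn^\circlearrowright$ is a \emph{wide} group bundle over $\Lambda$ so that $\underline{\Nn}=\Nn$ and the constructions of \S\ref{subsec:crossed_GpdLambda} apply verbatim, and tracking the definitions so that the action obtained on $\Nn_\lambda$ is literally $\btriangleright$ rather than some conjugate of it. The single point worth emphasising in the write-up is that, absent the coarse (or at least Schurian) hypothesis, the action $\btriangleright$, and hence the identification $\Gg\cong\Nn_\lambda\lfibre{}{}\Hh$, depends on the auxiliary choices of $\Hh'$ and of $\lambda$, which is exactly why canonicity is asserted only in the coarse case.
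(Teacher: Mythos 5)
Your proposal is correct and follows exactly the paper's own argument, which simply merges Corollary \ref{cor:Radford_GpdLambda} with Lemma \ref{lem:semidirect_is_crossed}; your additional bookkeeping about the choices of $\Hh'$ and $\lambda$ and the canonicity claim is consistent with the remarks already made in the proof of Lemma \ref{lem:semidirect_is_crossed}.
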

	\begin{proof}
		It suffices to merge \cref{cor:Radford_GpdLambda} with \cref{lem:semidirect_is_crossed}.
	\end{proof}
	
	\subsection*{Acknowledgements} This work benefited from conversations with Alessandro Ardizzoni, Alan Cigoli, Ilaria Colazzo, Kenny De Commer, Marino Gran, Geoffrey Janssens, Isabel Martin-Lyons, Silvia Properzi, Paolo Saracco, and Leandro Vendramin. The author is especially grateful to Alessandro Ardizzoni for his many remarks and his feedback on this work, to Alan Cigoli and Marino Gran for pointing out the reference \cite{MetereMontoliSemidirectIntGpd}, and to Leandro Vendramin for his constant interest and his advice.
	
	The author was funded by the Università di Torino though a PNRR DM 118 scholarship; by the Vrije Universiteit Brussel through the bench fee OZR3762; by Leandro Vendramin through the FWO Senior Research Project G004124N; and by the European Union -- NextGe\-ne\-ra\-tionEU under NRRP, Mission 4 Component 2 CUP D53D23005960006 -- Call PRIN 2022
	No.\@ 104 of February 2, 2022 of the Italian Ministry of University and Research; Project
	2022S97PMY \textit{Structures for Quivers, Algebras and Representations} (SQUARE).
	
	\begin{CJK}{UTF8}{gbsn}
		\bibliographystyle{acm}
		\bibliography{../refs}

\begin{thebibliography}{10}

\bibitem{AguiarAndruskiewitsch}
{\sc Aguiar, M., and Andruskiewitsch, N.}
\newblock Representations of matched pairs of groupoids and applications to
  weak {H}opf algebras.
\newblock In {\em Algebraic structures and their representations}, vol.~376 of
  {\em Contemp. Math.} Amer. Math. Soc., Providence, RI, 2005, pp.~127--173.

\bibitem{andruskiewitsch2005quiver}
{\sc Andruskiewitsch, N.}
\newblock On the quiver-theoretical quantum {Y}ang--{B}axter equation.
\newblock {\em Selecta Math. (N.S.) 11}, 2 (2005), 203--246.
\newblock With an appendix by Mitsuhiro Takeuchi.

\bibitem{avila_complete}
{\sc \'Avila, J., Mar\'in, V., and Pinedo, H.}
\newblock Isomorphism theorems for groupoids and some applications.
\newblock {\em Int. J. Math. Math. Sci.\/} (2020), Art. ID 3967368, 10.

\bibitem{GeneralizedCongruences}
{\sc Bednarczyk, M.~A., Borzyszkowski, A.~M., and Pawlowski, W.}
\newblock Generalized congruences---epimorphisms in {${{\mathscr C}\!at}$}.
\newblock {\em Theory Appl. Categ. 5\/} (1999), No. 11, 266--280.

\bibitem{BournJanelidzeProtomodularityDescSemProd}
{\sc Bourn, D., and Janelidze, G.}
\newblock Protomodularity, descent, and semidirect products.
\newblock {\em Theory Appl. Categ. 4\/} (1998), No. 2, 37--46.

\bibitem{BrandtGroupoids}
{\sc Brandt, H.}
\newblock {\"U}ber eine {V}erallgemeinerung des {G}ruppenbegriffes.
\newblock {\em Math. Ann. 96}, 1 (1927), 360--366.

\bibitem{BrownGroupoidsVanKampen}
{\sc Brown, R.}
\newblock Groupoids and van {K}ampen's theorem.
\newblock {\em Proc. London Math. Soc. (3) 17\/} (1967), 385--401.

\bibitem{BrownFibrations}
{\sc Brown, R.}
\newblock Fibrations of groupoids.
\newblock {\em J. Algebra 15\/} (1970), 103--132.

\bibitem{BrownGroupoidsAsCoefficients}
{\sc Brown, R.}
\newblock Groupoids as coefficients.
\newblock {\em Proc. London Math. Soc. (3) 25\/} (1972), 413--426.

\bibitem{GroupsToGroupoidsBrown}
{\sc Brown, R.}
\newblock From groups to groupoids: a brief survey.
\newblock {\em Bull. London Math. Soc. 19}, 2 (1987), 113--134.

\bibitem{BrownBookGroupoids}
{\sc Brown, R.}
\newblock {\em Topology and groupoids}, third~ed.
\newblock BookSurge, LLC, Charleston, SC, 2006.
\newblock With 1 CD-ROM (Windows, Macintosh and UNIX).

\bibitem{BrownNotJust}
{\sc Brown, R.}
\newblock ``{N}ot just an idle game'': the story of higher-dimensional versions
  of the {P}oincar\'e{} fundamental group.
\newblock {\em Math. Intelligencer 43}, 4 (2021), 5--15.

\bibitem{brown2002discontinuousaction}
{\sc Brown, R., and Higgins, P.~J.}
\newblock The fundamental groupoid of the quotient of a {H}ausdorff space by a
  discontinuous action of a discrete group is the orbit groupoid of the induced
  action.
\newblock {\em arXiv preprint math/0212271\/} (2002).

\bibitem{ChustCoelhoPathAlgebras}
{\sc Chust, V., and Coelho, F.~U.}
\newblock On the correspondence between path algebras and generalized path
  algebras.
\newblock {\em Comm. Algebra 50}, 5 (2022), 2056--2071.

\bibitem{GroupoidalPictureQM}
{\sc Ciaglia, F.~M., Di~Cosmo, F., Ibort, A., and Marmo, G.}
\newblock The groupoidal picture of quantum mechanics.
\newblock {\em J. Geom. Phys. 197\/} (2024), Paper No. 105095, 27.

\bibitem{CrowellFoxKnot}
{\sc Crowell, R.~H., and Fox, R.~H.}
\newblock {\em Introduction to knot theory}.
\newblock Ginn and Company, Boston, MA, 1963.
\newblock Based upon lectures given at Haverford College under the Philips
  Lecture Program.

\bibitem{duwenig2024zappa}
{\sc Duwenig, A., and Li, B.}
\newblock The {Z}appa--{S}zép product of twisted groupoids.
\newblock {\em arXiv preprint arXiv:2406.00466\/} (2024).

\bibitem{EhresmannC.StructuralGroupoids}
{\sc Ehresmann, C.}
\newblock Les connexions infinitésimales dans un espace fibré
  différentiable.
\newblock In {\em Colloque de topologie (espaces fibrés), {B}ruxelles, 1950}.
  Georges Thone, Liège, 1951, pp.~29--55.

\bibitem{ferri2024dynamical}
{\sc Ferri, D.}
\newblock On dynamical skew braces and skew bracoids.
\newblock {\em J. Pure Appl. Algebra 229}, 9 (2025), Paper No. 108036.

\bibitem{FerriShibukawa}
{\sc Ferri, D., and Shibukawa, Y.}
\newblock Structure groupoids of quiver-theoretic {Y}ang--{B}axter maps.
\newblock {\em arXiv preprint arXiv:2503.10327\/} (2025).

\bibitem{GranGrayInternalGroupoids}
{\sc Gran, M., and Gray, J. R.~A.}
\newblock Action representability of the category of internal groupoids.
\newblock {\em Theory Appl. Categ. 37\/} (2021), Paper No. 1, 1--13.

\bibitem{GranSciandra2024}
{\sc Gran, M., and Sciandra, A.}
\newblock Hopf braces and semi-abelian categories.
\newblock {\em arXiv preprint arXiv:2411.19238\/} (2024).

\bibitem{HopfCocIsSemiabelian}
{\sc Gran, M., Sterck, F., and Vercruysse, J.}
\newblock A semi-abelian extension of a theorem by {T}akeuchi.
\newblock {\em J. Pure Appl. Algebra 223}, 10 (2019), 4171--4190.

\bibitem{IbortMarmoGroupoids}
{\sc Ibort, A., and Marmo, G.}
\newblock A proposal for the groupoidal description of classical and quantum
  fields.
\newblock In {\em Particles, fields and topology---celebrating {A}. {P}.
  {B}alachandran}. World Sci. Publ., Hackensack, NJ, 2023, pp.~71--85.

\bibitem{introGroupoids}
{\sc Ibort, A., and Rodríguez, M.~A.}
\newblock {\em An introduction to groups, groupoids and their representations}.
\newblock CRC Press, Boca Raton, FL, 2020.

\bibitem{IsbellEpimorphisms3}
{\sc Isbell, J.~R.}
\newblock Epimorphisms and dominions. {III}.
\newblock {\em Amer. J. Math. 90\/} (1968), 1025--1030.

\bibitem{jonsson2017poloids}
{\sc Jonsson, D.}
\newblock Poloids from the points of view of partial transformations and
  category theory.
\newblock {\em arXiv preprint arXiv:1710.04634\/} (2017).

\bibitem{categories-for-the-work}
{\sc MacLane, S.}
\newblock Categories for the working mathematician. 4th corrected printing.
\newblock {\em Graduate texts in mathematics 5\/} (1988).

\bibitem{matsumotoshimizu}
{\sc Matsumoto, D.~K., and Shimizu, K.}
\newblock Quiver-theoretical approach to dynamical {Y}ang--{B}axter maps.
\newblock {\em J. Algebra 507\/} (2018), 47--80.

\bibitem{MetereMontoliSemidirectIntGpd}
{\sc Metere, G., and Montoli, A.}
\newblock Semidirect products of internal groupoids.
\newblock {\em J. Pure Appl. Algebra 214}, 10 (2010), 1854--1861.

\bibitem{orchard2020unifying}
{\sc Orchard, D., Wadler, P., and Eades, III, H.}
\newblock Unifying graded and parameterised monads.
\newblock In {\em Proceedings {E}ighth {W}orkshop on {M}athematically
  {S}tructured {F}unctional {P}rogramming\/} (2020), vol.~317 of {\em Electron.
  Proc. Theor. Comput. Sci. (EPTCS)}, pp.~18--38.

\bibitem{paques2018galois}
{\sc Paques, A., and Tamusiunas, T.}
\newblock The {G}alois correspondence theorem for groupoid actions.
\newblock {\em J. Algebra 509\/} (2018), 105--123.

\bibitem{PlessasThesis}
{\sc Plessas, D.~J.}
\newblock {\em The categories of graphs}.
\newblock ProQuest LLC, Ann Arbor, MI, 2011.
\newblock Thesis (Ph.D.)--University of Montana.

\bibitem{pradines2007ehresmann}
{\sc Pradines, J.}
\newblock In {E}hresmann's footsteps: from group geometries to groupoid
  geometries.
\newblock {\em arXiv preprint arXiv:0711.1608\/} (2007).

\bibitem{ReidemeisterEinfuehrung}
{\sc Reidemeister, K.}
\newblock {\em Einf{\"u}hrung in die kombinatorische {Topologie}}.
\newblock {Friedr}. {Vieweg} \& {Sohn} {A}.-{G}., Braunschweig, 1932.

\bibitem{RhodesTilsonKerMonMorph}
{\sc Rhodes, J., and Tilson, B.}
\newblock The kernel of monoid morphisms.
\newblock {\em J. Pure Appl. Algebra 62}, 3 (1989), 227--268.

\bibitem{riehl-context}
{\sc Riehl, E.}
\newblock {\em Category theory in context}.
\newblock Aurora Dover Modern Math Originals. Dover Publications, Inc.,
  Mineola, NY, 2016.

\bibitem{SchreierFreieGruppen}
{\sc Schreier, O.}
\newblock Die {U}ntergruppen der freien {G}ruppen.
\newblock {\em Abh. Math. Sem. Univ. Hamburg 5}, 1 (1927), 161--183.

\bibitem{SegalClassifyingSpace}
{\sc Segal, G.}
\newblock Classifying spaces and spectral sequences.
\newblock {\em Inst. Hautes \'Etudes Sci. Publ. Math.}, 34 (1968), 105--112.

\bibitem{SteinbergSemidirectCategories}
{\sc Steinberg, B.}
\newblock Semidirect products of categories and applications.
\newblock {\em J. Pure Appl. Algebra 142}, 2 (1999), 153--182.

\bibitem{TilsonCatAsAlg}
{\sc Tilson, B.}
\newblock Categories as algebra: an essential ingredient in the theory of
  monoids.
\newblock {\em J. Pure Appl. Algebra 48}, 1-2 (1987), 83--198.

\bibitem{TilsonMonKerSemiProd}
{\sc Tilson, B.}
\newblock Monoid kernels, semidirect products and their adjoint relationship.
\newblock In {\em Monoids and semigroups with applications ({B}erkeley, {CA},
  1989)}. World Sci. Publ., River Edge, NJ, 1991, pp.~31--54.

\bibitem{trevino2024semi}
{\sc Trevi{\~n}o-Marroqu{\'\i}n, J.}
\newblock Semi-coarse spaces: {F}undamental groupoid and the {V}an {K}ampen
  theorem.
\newblock {\em arXiv preprint arXiv:2404.08874\/} (2024).

\bibitem{VanOystaeyenQuivers}
{\sc Van~Oystaeyen, F., and Zhang, P.}
\newblock Quiver {H}opf algebras.
\newblock {\em J. Algebra 280}, 2 (2004), 577--589.

\end{thebibliography}
	\end{CJK}
\end{document}